\documentclass[a4paper]{article}
\usepackage[all]{xy}\usepackage[latin1]{inputenc}        
\usepackage[dvips]{graphics,graphicx}
\usepackage{amsfonts,amssymb,amsmath,color,mathrsfs, amstext}
\usepackage{amsbsy, amsopn, amscd, amsxtra, amsthm,authblk}
\usepackage{enumerate,algorithmicx,algorithm}
\usepackage{algpseudocode}
\usepackage{upref}
\usepackage{geometry}
\usepackage{amsthm,amsmath,amssymb}
\usepackage{relsize}
\usepackage{mathrsfs}
\usepackage{bm}
\usepackage{exscale}
\usepackage{graphicx}
\usepackage{tabularx}
\usepackage{threeparttable,multirow,dcolumn,booktabs}
\usepackage{algorithmicx,algorithm}
\usepackage{tabularx}
\usepackage{caption}
\captionsetup{font={small}}
\geometry{left=3.5cm,right=3.5cm,top=3cm,bottom=3cm}
\usepackage[displaymath]{lineno}
\usepackage{float,color}
\usepackage{bm}
\usepackage{caption}
\usepackage{setspace}
\usepackage{yhmath}
\usepackage{booktabs}
\usepackage{subcaption}
\usepackage{multirow}
\usepackage{makecell}
\usepackage[colorlinks,
            linkcolor=red,
            anchorcolor=red,
            citecolor=red
            ]{hyperref}

\numberwithin{equation}{section}

\newtheorem{theorem}{Theorem}[section]
\newtheorem{lemma}{Lemma}[section]

\newtheorem{remark}{Remark}[section]

\numberwithin{equation}{section}

\makeatletter
\newcommand{\biggg}{\bBigg@{3}}
\newcommand{\Biggg}{\bBigg@{3.5}}
\newcommand{\bigggg}{\bBigg@{4}}
\newcommand{\Bigggg}{\bBigg@{4.5}}
\newcommand{\biggggg}{\bBigg@{5}}
\newcommand{\Biggggg}{\bBigg@{5.5}}
\makeatother

\begin{document}

\title{A Kernel-Independent Sum-of-Exponentials Method}

\author[1]{Zixuan Gao\thanks{1270157606gzx@sjtu.edu.cn}}
\author[1,2]{Jiuyang Liang\thanks{liangjiuyang@sjtu.edu.cn}}
\author[1,2]{Zhenli Xu\thanks{xuzl@sjtu.edu.cn}}

\affil[1]{School of Mathematical Sciences, Shanghai Jiao Tong University, Shanghai 200240, P. R. China}
\affil[2]{Institute of Natural Sciences and MOE-LSC, Shanghai Jiao Tong University, Shanghai, 200240, P. R. China}

\date{}
\maketitle

\begin{abstract}
We propose an accurate algorithm for a novel sum-of-exponentials (SOE) approximation of kernel functions,
and develop a fast algorithm for convolution quadrature based on the SOE,  which allows an order $N$ calculation for $N$ time steps of approximating a continuous temporal convolution integral.
The SOE method is constructed by a combination of the de la Vall\'ee-Poussin sums for a semi-analytical exponential expansion of a general kernel, and a model reduction technique for the minimization of the number of exponentials under given error tolerance. We employ the SOE expansion for the finite part of the splitting convolution kernel such that the convolution integral can be solved as a system of ordinary differential equations due to the exponential kernels. The significant features of our algorithm are that the SOE method is efficient and accurate, and works for general kernels with controllable upperbound of positive exponents.  We provide numerical analysis for both the new SOE method and the SOE-based convolution quadrature. Numerical results on different kernels, the convolution integral and integral equations demonstrate attractive performance of both accuracy and efficiency of the proposed method.

{\bf Key words}. Sum-of-exponentials, radial basis kernel, model reduction, convolution quadrature, integral equations

{\bf AMS subject classifications}.  	
65R20; 
41A30; 
42A38  
\end{abstract}

\section{Introduction}
In this paper, we describe a new numerical algorithm for the construction of sum-of-exponentials (SOE) approximation for a given scalar smooth function $f(x)$ of the form
\begin{equation}\label{SOEexpansion}
	\max\limits_{x\in I}\left|f(x)-\sum_j m_j e^{-s_j x}\right|<\varepsilon,
\end{equation}
where $I$ is a finite interval which could be an arbitrary subset of $\mathbb{R}^+$, $\varepsilon$ is an error tolerance, and $m_j$ and $s_j$ are parameters representing the weight and exponent of the $j$th exponential, respectively.
In Eq.\eqref{SOEexpansion}, both $m_j$ and $s_j$ could be complex, while the real part of $s_j$ is assumed to be non-negative to avoid the explosion at infinity.

Over the past decades, SOE methods has attracted attention in many applications of scientific computing \cite{Beylkin2009Fast,H1999A,Yong2018The,Jiang2008Efficient,Lubich2002},
as an SOE approximation enables a recurrence scheme to compute the spatial discrete convolution, and thus dramatically reduces the cost. In some engineering areas, like acoustic and electromagnetic simulations, SOE methods are also involved as an useful tool in constructing discrete complex image representation, which plays an important role in developing fast multipole method (FMM) for 3D Helmholtz Green's function at low frequencies in layered media \cite{wang2020taylor}, by applying the Sommerfeld identity \cite{fang1988discrete,alparslan2010closed}. Employing the SOE for constructing fast algorithm for the calculation of pair interaction is also explored in some recent works \cite{spivak2010fast,2020onekernel}.

How well the SOE with applications to these areas depends on three key factors: the convergence rate of the SOE with the increasing number of exponentials, the robustness especially for kernels with near-singular and moderate oscillating nature, and the range of the maximum (in the sense of module) exponent of the SOE, i.e., $\max_{j}|s_j|$ in Eq.\eqref{SOEexpansion}. The design of such an SOE approximation for some additional constraints is strongly nonlinear and highly nontrivial, and has been an extensively studied subject \cite{Beylkin2005Approximation,Beylkin2010Approximation,Braess1995Asymptotics,Dietrich2005Approximation,Braess2009On,Evans1980On,articleGonchar,jiang2019fast,hamming2012numerical,Wiscombe1977Exponential}. One less studied but important problem is the last issue, i.e., the approximation with small exponents. In fact, a large maximum exponent may take serious effect on both the accuracy and efficiency, as it has a large derivative and the roundoff error where the former will lead to a stiff problem \cite{boyd2010uselessness,Lubich1993Runge} and the latter will increase with the rise of exponent. This problem will be further explained in the Section \ref{fastconv} and Section \ref{convolutionquadrature} of this paper, taking examples of convolution quadrature and solving integral equations. Besides, theoretical discussions on the convergence rate are also difficult to be established in some SOE methods based on adaptive partition.


Motivated by above observations, we propose a novel kernel-independent and high-accurate SOE method by a combination of the de la Vall\'ee-Poussin (VP) sum \cite{Al2001Approximation,de1919leccons} and the model reduction (MR) \cite{Moore1981Principal,Peter2015SIAM} technique. In the so-called VPMR approach, an accurate SOE approximation is first constructed by employing the VP sum via a variable substitution,
\begin{equation}\label{variable1}
	x=-n_c\log\left(\dfrac{1+\cos r}{2}\right),~r\in[0,\pi],
\end{equation}
where the parameters of the exponentials are expressed analytically. The variable substitution introduces a parameter $n_{c}$ which allows to tune the maximal exponent of the exponentials. Subsequently, the MR technique is used to further reduce the number of exponentials within the given error tolerance based on an observation that
\begin{equation}\label{laplace}
	\mathscr{L}\left[\sum_{j}m_{j}e^{-s_jx}\right]=\sum_{j}\dfrac{m_{j}}{z+s_j}
	=\bm{c}(z\bm{I}-\bm{A})^{-1}\bm{b},
\end{equation}
where $\mathscr{L}$ denotes the Laplace transform, $\bm{A}$ is a diagonal matrix, $\bm{b}$ and $\bm{c}$ are column and row vectors, respectively. The right hand of Eq.\eqref{laplace} shares the same form of the transfer function of a linear dynamical system, thus the so-called balanced truncation method \cite{Moore1981Principal} can be used for the MR, achieving an optimized SOE approximation. The universal error estimation and complexity analysis are both furnished. The error estimation validates that Eq.\eqref{variable1} will preserve the smoothness of the resulting function at the origin, and guarantees the global convergence rate. The comparison of accuracy and convergence rate between VPMR and several theoretical methods, including contour integral methods \cite{jiang2019fast} on Gaussian kernel and the classical Prony's method \cite{hamming2012numerical} on four frequently-used kernels which have complicated forms, near-singular properties, or oscillations, show promising efficiency of our method.

The SOEs of the form Eq.\eqref{SOEexpansion} occur commonly in computational mathematics and computational physics. For instance, \cite{schadle2006fast} describes a scheme for accelerating the convolution quadrature,
\begin{equation}\label{conSOE}
	y(t)=f*g=\int_0^tf(t-\tau)g(\tau)d\tau,
\end{equation}
where $f(\tau)$ is the kernel function, which may has singularity at the origin, and $g(\tau)$ is a smooth function, based on a technique of the Laplace inverse transform of the kernel function such that
\begin{equation}
	y(t)=\dfrac{1}{2\pi i}\int_\Gamma F(\lambda)\int_0^t e^{\lambda(t-\tau)}g(\tau)d\tau d\lambda.
\end{equation}
The scheme requires the construction of quadratures of $\lambda$ for which the inverse Laplace transform is well approximated, and the resulting convolution integral associated with $\tau$ satisfies a combination of ODEs with form $u'=\lambda u+g(t)$ with $u(0)=0$ and can be integrated by the RK method. A stable and efficient discretization of the interval $\Gamma$ requires an $O(1/h)$ cutoff of both the upper/lower bound of $\Gamma$ with $h$ the step size of the RK method. This $O(1/h)$ cutoff will lead to a stiff problem such that the convergence rate is $O(h^{\min(p,q+1)})$ with $p$ the order and $q$ the stage order. We describe in detail the construction of efficient quadratures for the discretization of Eq.\eqref{conSOE} in Section \ref{fastconv} of this paper, by first replacing the inverse Laplace transform of $f(t-\tau)$ to the corresponding SOE produced via the VPMR method. Second, for the case of $f(t-\tau)$ having no singularity, we follow the general procedure of the RK method to evaluate $y(t)$ recursively; for the case of $f(t-\tau)$ having singularity at the origin, the singularity part of the convolution integral is extracted and is approximated by combination of the local expansion of $f(t-\tau)$ and the local interpolation of $g(\tau)$, and then the RK to the smooth part can be applied. Rigorous error bound is established by following the work of Lubich \emph{et al.} \cite{Lubich1993Runge,banjai2011error}, showing $O(h^{p})$ rate of the RK which the stage order is nolonger involved.

SOEs also have obvious applications to the discretization of integral equations. In particular, for a given kernel function $f$ convoluting with an unknown (except the initial point) function, given integral equations of the forms either
\begin{equation}\label{Ht1}
	(1-\varpi)g(t)+H(t)=\int_0^tf(t-\tau)g(\tau)d\tau
\end{equation}
with $H(\tau)$ a given source, $\varpi$ a real parameter,
or the so-called nonlinear Volterra equation
\begin{equation}\label{Volterra1}
	u(t)=a(t)+\int_0^tf(t-\tau)g(\tau,u(\tau))d\tau
\end{equation}
with $a(\tau)$ an inhomogeneous known function, the discretization \cite{schadle2006fast} based on the inverse Laplace transform of $f(t-\tau)$ proceeds by choosing appropriate integral contour such that the convolution part of Eq.\eqref{Ht1} and Eq.\eqref{Volterra1} are approximated by
\begin{equation}
	\sum_{j=0}^{n}W_{n-j}g_{j},~\,\,
\end{equation}
with $t=nh$, $W$ the weight matrices, and $g_j$ the numerical value of $g$ at $\tau=jh$. Also, the RK method is used for recursively solving the resulting linear/nonlinear equation, and a large $O(1/h)$ cutoff of the contour will reduce the performance especially for a small step size. In Section \ref{convolutionquadrature}, our schemes are developed to construct the discretization of these integral equations by using the SOE of $f(t-\tau)$ produced by the VPMR method. These schemes provide an efficient means of avoiding stiff problem when solving integral equations, where the convergence rate of the RK is $O(h^{p})$ and the stage order is not involved, as compared with many inverse Laplace-based works \cite{Lubich1993Runge,schadle2006fast,banjai2011error}. The error estimation of our schemes are non-trivial thanks to the introducted technique of removing singularity, and are rigorously provided.

The remainder of the paper is organized as follows. In Section 2, we introduce the VPMR method to find an efficient and accurate SOE approximation, and establish both an universal error estimation and a detailed complexity analysis. Comparison results between the VPMR and some existing SOE works on different kernels are also provided. In Section 3, we describe numerical algorithms with numerical analysis of errors, which are based on SOE and RK method, for temporal convolutions. In Section 4, we extend our SOE to construct an algorithm for fast solving two kinds of integral equations including the linear convolution equation and the nonlinear Volterra integral equation. Conclusions are given in Section 6.


\section{Sum-of-exponentials Method}\label{VPMR}
In this section, we introduce a kernel-independent SOE method based on the VP sums and the MR for constructing a VPMR algorithm for the SOE approximation.
This VPMR algorithm was originally proposed to design a sum-of-Gaussians approximation
which the minimum bandwidth is controllable for all of the non-oscillatory kernel
thus speeds up the evaluation of resulting kernel summation problem \cite{2020kernel}.
This paper extends the VPMR idea to obtain the SOE approximation by first using a different variable change which has better continuity and then applying the MR to the resulting SOE. A new rigorous error estimate for more general cases is also provided in this section.

\subsection{The VPMR algorithm}
Throughout this work, the kernel function $f(x)$ is defined on the positive
axis $x\geq0$, and has a finite limit at $x\rightarrow \infty$. We introduce a variable change
\begin{equation}\label{variable}
x=-n_c\log\left(\dfrac{1+\cos r}{2}\right),~r\in[0,\pi],
\end{equation}
such that $K(r) = f(x)$ is smooth on $[0,\pi]$, and $n_c$ is a positive number which is used to control
the upperbound of the positive exponents. The variable change is a one-to-one map and
an even and periodic prolongation of $K(r)$ can be employed such that the kernel function $K(r)$ is defined
 on the whole axis $(-\infty,\infty)$ with $2\pi$ period. The kernel can then be represented by the
 VP-sum \cite{2020kernel,Al2001Approximation} approximation, $K(r) \approx V_{n}[K(r)]$, where,
\begin{equation}\label{Vn}
V_n[K(r)]=\dfrac{2}{n\pi}\sum\limits_{\ell=n}^{2n-1}\sum\limits_{j=0}^{\ell}\alpha_j\cos(j r)\int_{0}^{\pi}K(\tau)\cos(j \tau)d\tau,
\end{equation}
with $\alpha_j=1$ for $j\geq 1$ and $\alpha_0=1/2$.
It is noted that generally the VP sums can be expressed as the means of the partial sums $S_i$, $i=0,\cdots, 2n-1$, of the Fourier series of the kernel function, $V_k[K(r)]=\sum_{i=2n-1-k}^{2n-1}S_i/(k+1)$, for which $k=0$ corresponds to the partial Fourier sums, and $k=2n-1$ corresponds to the Fej\'er sums. Here Eq.\eqref{Vn} corresponds to the $k=n$ case.

Substituting the inverse transform $r=\arccos\left(2e^{-x/n_c}-1\right)$ to Eq.\eqref{Vn},
one obtains the following approximation of kernel function,
\begin{equation}
f(x)\approx \dfrac{1}{\pi}\int_{0}^{\pi}K(\tau)d\tau+\sum_{j=1}^{2n-1}A_j T_j(2e^{-x/n_c}-1),
\end{equation}
where $T_j(x)=\cos(j\arccos x)$ is the Chebyshev polynomial of degree $j$, and
\begin{equation}\label{coefficients}
A_j=\max\left\{\dfrac{2}{\pi},\dfrac{4n-2j}{n\pi}\right\}\int_{0}^{\pi}K(\tau)\cos(j \tau)d\tau
\end{equation}
is the coefficient.

By employing the expansion form of Chebyshev polynomials, one can obtain an SOE expansion
\begin{equation}\label{firstSOE}
f(x)\approx \sum_{j=0}^{2n-1}w_je^{-jx/n_c},
\end{equation}
where the expansion coefficient $w_j$ is given by,
\begin{equation}\label{Expansion-coefficients}
w_j=\begin{cases}	
2a_0+\sum\limits_{\ell=1}^n(-1)^\ell\dfrac{n}{2n-\ell} a_\ell+\sum\limits_{\ell=1}^{n-1}(-1)^{n+\ell}\dfrac{n-\ell}{2n-\ell}a_{n+\ell},~~~~\text{for}~~j=0\\\\
2^{2j} \sum\limits_{\ell=j}^{n}(-1)^{\ell-j}\dfrac{n\ell }{(\ell+j)(2n-\ell)}\mathlarger{\binom{\ell+j}{\ell-j}}a_\ell
+\sum\limits_{\ell=1}^{n-1}c_n^{j\ell}\dfrac{n}{2n-\ell} a_{n+\ell},
~~~~\text{for}~~1\leq j\leq n\\\\	
\sum\limits_{\ell=j-n}^{n-1} \dfrac{nc_n^{j\ell}}{2n-\ell}a_{n+\ell},~~~~\text{for}~~j>n
\end{cases}
\end{equation}
and
\begin{equation}
c_n^{j\ell}=(-1)^{n+\ell-j}\left(1-\dfrac{\ell}{n}\right)\dfrac{(n+\ell)}{n+\ell+j}{\binom{n+\ell+j}{n+\ell-j}}2^{2j}.
\end{equation}

Note that the maximal positive exponent in the exponentials is $(2n-1)/n_c$, thus $n_c$ determines the  upperbound of the positive exponents.
It asymptotically becomes constant if one sets $n_{c}\propto n$. The controllability of the
 upperbound is very important because an SOE with large exponent leads to a stiff system
 when we use them to construct a fast algorithm for the temporal convolution.

One can employ the MR to reduce the number of exponentials to achieve a nearly optimal SOE approximation.
We introduce the balanced truncation method in the model order reductions \cite{Antoulas2001,Moore1981Principal} for the purpose.
Namely, we find an appropriate $P$-term exponentials to approximate Eq.\eqref{firstSOE}
\begin{equation}\label{mdrec}
\sum_{j=0}^{2n-1}w_{j}e^{-jx/n_{c}}\approx\sum_{\ell=1}^{P}m_{\ell}e^{-s_{\ell}x},
\end{equation}
with $P<2n-1$, such that the error is with a given tolerance. Based on extensive numerical experiments, we have found that the maximum positive exponent remains $\max\limits_{\ell}|s_\ell|\approx (2n-1)/n_c$ after the MR. The first step of the MR procedure is to apply the Laplace
transform on the SOE expansion (excluding $j=0$), which results in a sum-of-poles representation and
one can simply express it as a transfer function of a linear dynamical system,
\begin{equation}
\mathscr{L}\left[\sum_{j=1}^{2n-1}w_{j}e^{-jx/n_{c}}\right]=\sum_{j=1}^{2n-1}\dfrac{w_{j}}{z+j/n_c}
=\bm{c}(z\bm{I}-\bm{A})^{-1}\bm{b},
\end{equation}
where $\bm{A}$ is a $(2n-1)\times(2n-1)$ diagonal matrix, $\bm{b}$ and $\bm{c}$
are column and row vectors of dimension $(2n-1)$, respectively. The key for the MR is
to obtain the Hankel singular values by solving two Lyapunov equations,
\begin{equation}
\bm{AP}+\bm{PA}^{*}+\bm{bb}^{*}=0,~\hbox{and}~\bm{A}^{*}\bm{Q}+\bm{QA}+\bm{c}^{*}\bm{c}=0.
\end{equation}
The $i$-th Hankel singular value is defined as $\sigma_{i}=\sqrt{\lambda_{i}(\bm{PQ})}$,
where $\lambda_{i}(\bm{PQ})$ is the $i$-th eigenvalue of the product of matrices $\bm{P}$ and $\bm{Q}$.

The next step of the MR is to find a balancing transformation matrix $\bm{X}$ via the square root method \cite{Antoulas2001,Moore1981Principal}. Under this transformation and by defining matrices
$\widetilde{\bm{A}}=\bm{XAX}^{-1}$, $\widetilde{\bm{b}}=\bm{Xb}$, $\widetilde{\bm{c}}=\bm{cX}^{-1}$,
the solutions $\bm{P}$ and $\bm{Q}$ to the two Lyapunov equations,
\begin{equation}
\widetilde{\bm{A}}\bm{P}+\bm{P}\widetilde{\bm{A}}^{*}+\widetilde{\bm{b}}\widetilde{\bm{b}}^{*}=0,
~~\widetilde{\bm{A}}^{*}\bm{Q}+\bm{Q}\widetilde{\bm{A}}+\widetilde{\bm{c}}^{*}\widetilde{\bm{c}}=0,
\end{equation}
become equal and diagonal \cite{Antoulas2001},
\begin{equation}
\bm{P}=\bm{Q}=\text{diag}(\sigma_{1},\cdots,\sigma_{n}).
\end{equation}
The reduced $(P-1)$-order model is then obtained by simply taking the $(P-1)\times (P-1)$, $(P-1)\times 1$ and $1\times (P-1)$
leading blocks of $\widetilde{\bm{A}}$, $\widetilde{\bm{b}}$ and $\widetilde{\bm{c}}$, respectively.
Then the corresponding transfer function $\widetilde{\bm{c}}(zI-\widetilde{\bm{A}})^{-1}\widetilde{\bm{b}}$
satisfies \cite{Peter2015SIAM,GLOVER1984}
\begin{equation}\label{MRestimate}
\sigma_{P-1}\leq\sup\limits_{z=i\mathbb{R}}\left|\widetilde{\bm{c}}(zI-\widetilde{\bm{A}})^{-1}\widetilde{\bm{b}}
-\bm{c}(zI-\bm{A})^{-1}\bm{b}\right|\leq 2(\sigma_{P}+\sigma_{P+1}+\cdots+\sigma_{n}).
\end{equation}
Note that the transformation matrix $\bm{X}$ can be obtained via other balancing methods, such as the stochastic balancing and the positive real balancing which yields solution to appropriate Lyapunov and/or Riccati equations equal and diagonal \cite{Antoulas2001,Raimund1991}. Detailed procedures of the MR approach we use are summaried in Table \ref{modelreduction}.

\begin{algorithm}[h]
	\setstretch{1.35}
\caption{Model Reduction of Eq.\eqref{mdrec} based on balancing transformation}\label{modelreduction}
\hspace*{0.02in} {\bf Input:}$\{w_r\}_{r=0}^{2n-1}$, $n_c$ and the tolerance error $\varepsilon$.\\
\hspace*{0.02in} {\bf Output:}$\{m_l\}_{l=0}^{P-1}$ and $\{s_l\}_{l=0}^{P-1}$.
\begin{algorithmic}[1]
\State Form a diagonal matrix $\bm{A}=-\text{diag}(1/n_c,2/n_c,\cdots,(2n-1)/n_c)$, a column vector $\bm{B}=(\sqrt{|w_1|},\sqrt{|w_2|},\cdots,\sqrt{|w_{2n-1}|})^T$, and a row vector $\bm{C}=(sgn(w_1)\sqrt{|w_1|},sgn(w_2)\sqrt{|w_2|},\cdots,sgn(w_{2n-1})\sqrt{|w_{2n-1}|})$.
\State Solve the Lyapunov equations $\bm{AP}+\bm{PA}^T=-\bm{BB}^T$ and $\bm{AQ}+\bm{QA}^T=-\bm{C}^T\bm{C}$.
\State Compute the Cholesky factor $\bm{S}$ and $\bm{L}$ of the solutions of the Lyapunov equations $\bm{P}$ and $\bm{Q}$, respectively, such that $\bm{P}=\bm{SS}^T$ and $\bm{Q}=\bm{LL}^T$ hold.
\State Compute the singular value decomposition of $\bm{S}^T\bm{L}$ such that $\bm{S}^T\bm{L}=\bm{U\Sigma V}^T$, where $\bm{\Sigma}=\text{diag}(\sigma_1,\sigma_2,\cdots,\sigma_{2n-1})$ with $\sigma_i$ the so-called Hankel singular value.
\State Compute the transform matrix $\widetilde{\bm{T}}=\bm{SU\Sigma}^{-\frac{1}{2}}$.
\State Form a matrix $\widetilde{\bm{A}}=\widetilde{\bm{T}}^{-1}\bm{A}\widetilde{\bm{T}}$, a column vector $\widetilde{\bm{B}}=\widetilde{\bm{T}}^{-1}\bm{B}$, and a row vector $\widetilde{\bm{C}}=\bm{C}\widetilde{\bm{T}}$.
\State Find $P$ such that $\displaystyle 2\sum_{i=P}^{2n-1}\sigma_i\leq\varepsilon$.
\State Form a matrix $\widehat{\bm{A}}$ which denotes from the first $(P-1)\times (P-1)$ block of $\widetilde{\bm{A}}$, a column vector $\widehat{\bm{B}}$ which denotes from the frist $(P-1)$ rows of $\widetilde{\bm{B}}$, and a row vector $\widehat{\bm{C}}$ which denotes from the first $(P-1)$ columns of $\widetilde{\bm{C}}$.
\State Compute the eigenvalue decomposition $\widehat{\bm{A}}=\bm{X}\Lambda \bm{X}^{-1}$. Set $s_l=\Lambda_{ll},l=1,2,\cdots,P-1$ and $s_0=0$.
\State Compute $\mathfrak{\bm{B}}=\bm{X}^{-1}\widehat{\bm{B}}, \mathfrak{\bm{C}}=\widehat{\bm{C}}\bm{X}$. Let  $m_0=w_0$ and $m_l=\mathfrak{B}_l\mathfrak{C}_l$ with $l=1,2,\cdots,P-1$.
\State \Return $\{m_l\}_{l=0}^{P-1}$ and $\{s_l\}_{l=0}^{P-1}$.
\end{algorithmic}
\end{algorithm}

\begin{remark}
Since the VPMR approach requires high-precision matrix manipulation, we employ the Multiple Precision Toolbox \cite{MP}
in order to implement the algorithm. These packages are used in both steps of the VP-sum and the model-reduction procedures.
The computer code of the VPMR approach is released as open source, which is available at https://github.com/ZXGao97.
\end{remark}

\begin{remark}\label{remark1}
	If the smooth function $f(x)$ has a limit at infinity, our method works on the whole positive-axis, thus the interval could be an arbitrary subset of $\mathbb{R}$. If the limit does not exist, the above VPMR method has low accuracy because of the discontinuity of the transformed function $K(r)$. This reason will be further explained in section \ref{errorestimate}. In order to achieve higher
	accuracy, one could truncate $f(x)$ at a required point, then connect a fast decreasing function (for example, the Gaussian function) behind the cutoff point such that a new function $f^*(x)$ to localize the kernel function. The higher-order differentiable properties of $f^*(x)$ can be kept.
\end{remark}

\begin{remark}
After the VP approach, the exponents $-j/n_{c}$ and the weights $w_{j}$ given in the left hand of Eq.\eqref{mdrec} are real, whereas the MR approach does not guarantee that the exponents $-s_{\ell}$ and the weights $m_{\ell}$ given in the right hand of Eq.\eqref{mdrec} are real. In most cases, $-s_{\ell}$ and $m_{\ell}$ are complex whereas the MR approach	guarantees that the real parts of $s_{\ell}$ are positive for all $\ell$ \cite{Moore1981Principal}. An interesting theorem \cite{liu1998model} is that $s_{\ell}$ are real and $m_{\ell}$ are positive when $w_{j}$ are positive. It seems to be more useful for other integral discretization-based SOE approximations whose weights are all positive \cite{Beylkin2005Approximation}. However, the limitation of such integral-based method is the difficulty in extending to general kernels.
\end{remark}

\subsection{Computation details and complexity analysis of the VPMR algorithm}
In this subsection, we perform some analysis to the complexity of the VPMR method.

First, we consider the VP-sum procedure. The coefficients $a_j$ in Eq.\eqref{coefficients} are evaluated using either the adaptive Gauss-Legendre integral quadrature \cite{shampine2008vectorized} (function ``quadgk'' in MATLAB) or the 2D dilation quadrature (degenerate into 1D case) \cite{occorsio2018cubature}, with average number of quadratures $\tilde{N}$. The adaptive Gauss-Legendre quadrature is efficient for kernel with non-oscillation or low frequency oscillation, whereas may not work for high frequency oscillating integral. The 2D dilation quadrature is more appropriate for integrating near-singular and oscillating kernels with high precision and efficiency. The construction of the coefficients $w_j$ using Eq.\eqref{Expansion-coefficients} requires expensive combinatorial number, but can be done via simple combinatorial recursions
\begin{equation}
	{\binom{n+\ell+j}{n+\ell-j}}=\dfrac{(2j+1)(n+\ell+j)}{(n+\ell-j)(n+\ell-j-1)}{\binom{n+\ell+j-1}{n+\ell-j+1}}.
\end{equation}
The complexity of $w_j$ is thus $\sim O(2n^2)$. The total complexity of the procedure of the VP-sum is $O(2n\tilde{N}+2n^2)$.

Second, we study the MR procedure based on square root factorization as given in Algorithm \ref{modelreduction}. Because $\bm{A}$ is a $(2n-1)\times(2n-1)$ diagonal matrix, $\bm{b}$ and $\bm{c}$ are column and row vectors of dimension $(2n-1)$, the two Lyapunov equations can be solved directly via $\sim O((2n-1)^2)$ operations due to the productions of $\bm{b}\bm{b}^{*}$ and $\bm{c}^{*}\bm{c}$. To obtain the Hankel singular value, we compute the Cholesky factor $\bm{S}$ and $\bm{L}$ of the solutions of the Lyapunov equations $\bm{P}$ and $\bm{Q}$ (both $\bm{P}$ and $\bm{Q}$ are symmetric positive definite), respectively, and then compute the singular value decomposition (SVD) of $\bm{S}^{T}\bm{L}$. The efficiently implemented Cholesky factorization requires $(2n-1)^3/6+O((2n-1)^2)$ operations \cite{krishnamoorthy2013matrix}. The SVD factorization and the matrix-matrix products in the Step 4-6 of Algorithm \ref{modelreduction} need $O((2n-1)^3)$ operations. In the Step 7-10 of Algorithm \ref{modelreduction}, we construct a low-rank approximation of the whole system by truncating the Hankel singular value at $k$-th. Similar analysis can be applied that the complexity for Step 7-10 yields $O(k^3)$.

Overall, the total computational complexity of the VPMR method is given by $O(2n\tilde{N}+2n^2+(2n-1)^3/6+(2n-1)^2+(2n-1)^3+k^3)$. The leading part is contributed from the SVD factorization appeared in Step 4 of the Algorithm \ref{modelreduction}. We note that $n\approx 1000$ is generally accurate enough for the VPMR method in most cases, thus the complexity is relatively not very big though $O((2n)^3)$ operations are needed. Furthermore, some approaches can be used for improving the efficiency like the randomized singular value decomposition (RSVD) method \cite{halko2011finding,rokhlin2010randomized} which reduce the computational cost of SVD from $O(n^3)$ to $O(kn^2+k^2n)$.

\subsection{Error estimate of the VPMR algorithm}\label{errorestimate}
In this subsection, we discuss the convergence rate of the VPMR method with respect to the number of terms $n$ to approximate the kernel function $f(x)$. Because the errors of the VP-sum and the MR approach can be treated separately, we only need to do the error estimate of approximating $K(r)$ using the VP-sum $V_n[K(r)]$ in Eq.\eqref{Vn}, due to the estimate of the MR approach has been given in Eq.\eqref{MRestimate}.

An observation is that the $2\pi$-periodic function $K(r)$ is smooth on $(0,\pi)$ and $(-\pi,0)$ whereas may not become $C^{\infty}$ at $r\in\{-\pi,0,\pi\}$, i.e., $K(r)$ is piecewise smooth at its definitional domain. This is because that we make an even and periodic prolongation of $K(r)$ such that $K(r)$ is defined on the whole axis $(-\infty,\infty)$ instead of $[0,\pi]$. Approximation properties of the VP-sum in the uniform metric are considered for certain classes of continuous and smooth functions in many papers \cite{lebesgue1909integrales,zakharov1968bound,stechkin1961approximation,efimov1959approximation,telyakovskii1958approximation,nikolski1940certaines}. However, local approximation properties of the VP-sum for piecewise smooth functions have been little studied. From recent work \cite{2020kernel}, the results are summarized as the following theorem.
\begin{theorem}\label{theorem2}
	If $K(r)$ defined by the variable change Eq.\eqref{variable} is not differentiable at $r=0$, its VP-sum  $V_n[K(r)]$ still converges to $K(r)$ uniformly on $\mathbb{R}$, whereas the rate of convergence at $r = 0$ is much slower than that at any other point, that
	\begin{equation}
		\begin{split}
		&V_n[K(0)]-K(0)=-\frac{\ln2}{n\pi}K'(r)\big{|}_{r=0}+o(n^{-1}),\\
		&V_n[K(r)]-K(r)=o(n^{-1})~\text{for}~r\neq0.
		\end{split}
	\end{equation}
	Furthermore, suppose that $K(r)$ is twice-differentiable on $[0,\pi]$ with $K'(r)\big{|}_{r=0}=0$ and period $2\pi$, then we have
	\begin{equation}
		\begin{split}
		&V_n[K(0)]-K(0)=O(n^{-2}),\\
		&V_n[K(r)]-K(r)=o(n^{-2})~\text{for}~r\neq0.
		\end{split}
	\end{equation}
\end{theorem}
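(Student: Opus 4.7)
The plan is to exploit the fact that the VP-sum acts diagonally on the Fourier cosine basis. After interchanging the order of summation in Eq.\eqref{Vn} and using the cosine expansion $K(r)=a_0/2+\sum_{j\geq 1}a_j\cos(jr)$ of the even $2\pi$-periodic extension of $K$, one verifies the multiplier representation
\begin{equation*}
V_n[K](r)=\frac{a_0}{2}+\sum_{j=1}^{n-1}a_j\cos(jr)+\sum_{j=n}^{2n-1}\Bigl(2-\frac{j}{n}\Bigr)a_j\cos(jr),
\end{equation*}
whence
\begin{equation*}
V_n[K](r)-K(r)=\sum_{j=n}^{2n-1}\Bigl(1-\frac{j}{n}\Bigr)a_j\cos(jr)-\sum_{j\geq 2n}a_j\cos(jr).
\end{equation*}
The entire analysis then reduces to bounding these two sums using the decay of $|a_j|$, which is dictated by the local smoothness of the periodic extension at the boundary point $r=0$; the point $r=\pi$ is benign because the variable change Eq.\eqref{variable} is smooth in $1+\cos r$ near $r=\pi$ for a kernel $f$ with a finite limit at infinity.

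For the first assertion, I would interpret the hypothesis that $K$ is not differentiable at $0$ as saying that $K$ has a finite one-sided derivative $K'(0^+)$ producing a corner, and then split
\begin{equation*}
K(r)=K(0)+K'(0^+)\,|r|+R(r),
\end{equation*}
where $R$ is smoother at the origin, so its cosine coefficients decay strictly faster than $1/j^2$. Inserting the classical series $|r|=\pi/2-(4/\pi)\sum_{k\geq 0}\cos((2k+1)r)/(2k+1)^2$ shows that $a_j$ has leading behaviour proportional to $K'(0^+)/j^2$ on the odd integers. Substituting at $r=0$ and approximating the window sum and the tail by their Riemann-integral counterparts, e.g.\ $\sum_{j=n}^{2n-1}(j-n)/(nj^2)\sim\int_n^{2n}(s-n)/(ns^2)\,ds=\log 2-1/2$, collects the announced leading-order expression of order $(\log 2)/(n\pi)\cdot K'(0^+)$, with an $o(n^{-1})$ correction coming from $R$. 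For any fixed $r\neq 0$, summation by parts combined with the Dirichlet-type bound $\bigl|\sum_{j=n}^M\cos(jr)\bigr|\leq 1/|\sin(r/2)|$ absorbs an additional factor of $1/n$ from the triangular weight $(1-j/n)$, upgrading the pointwise $O(n^{-1})$ estimate to $o(n^{-1})$.

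For the second assertion, twice-differentiability of $K$ on $[0,\pi]$ together with $K'(0)=0$ (and automatically $K'(\pi)=0$ by evenness of the extension around $r=\pi$) promotes the periodic even extension to $C^1(\mathbb{R})$ with a piecewise $C^1$ derivative, so two integrations by parts give $|a_j|=O(j^{-2})$ with no non-smooth leading part. Repeating the window-plus-tail argument with this improved decay yields $O(n^{-2})$ at $r=0$ and, after the same Dirichlet cancellation, $o(n^{-2})$ at any fixed $r\neq 0$. The main obstacle throughout is the transition regime where $r$ is small but nonzero: the Dirichlet bound degrades like $1/|\sin(r/2)|$, so the $o(\cdot)$ statements at $r\neq 0$ are necessarily pointwise and not uniform as $r\to 0$. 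The standard remedy is to split the window $[n,2n-1]$ into a low-frequency range $j\lesssim 1/|r|$, handled by the local Taylor or corner expansion as in the $r=0$ computation, and a high-frequency range $j\gtrsim 1/|r|$, handled by Abel summation and the Dirichlet bound; balancing the two contributions produces the pointwise $o(n^{-1})$ (respectively $o(n^{-2})$) statements as claimed.
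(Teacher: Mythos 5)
You should first note that the paper itself does not prove Theorem \ref{theorem2}: it is quoted from \cite{2020kernel}, and the only related machinery developed internally is Lemma \ref{lamma21} and Theorem \ref{theorem3}. Your multiplier identity for $V_n$ is correct, and subtracting the corner $K'(0^{+})|r|$ is a legitimate route to the first assertion. But your leading-order computation at $r=0$ does not close. Carrying out your own sketch with $a_j(|\cdot|)=-4/(\pi j^{2})$ on odd $j$: the window contributes $\frac{4}{\pi}\cdot\frac{1}{2}\cdot\frac{\ln 2-1/2}{n}=\frac{2\ln 2-1}{\pi n}$ and the tail contributes $\frac{1}{\pi n}$, giving $+\frac{2\ln 2}{\pi n}\,K'(0^{+})+o(n^{-1})$ in total. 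This is positive (consistent with the fact that smoothing rounds the vertex of $|r|$ from above) and differs from the stated $-\frac{\ln 2}{n\pi}K'(0^{+})$ in both sign and a factor of $2$; the two are reconciled only if $K'(r)\big|_{r=0}$ is read as $-\tfrac12$ times the derivative jump $K'(0^{+})-K'(0^{-})=2K'(0^{+})$ of the even extension. You assert that the computation "collects the announced expression" without tracking either the sign or the factor of two, so the first claim is not actually verified as written.

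The more serious gap is in the second assertion. Two integrations by parts plus Riemann--Lebesgue give, for a merely twice-differentiable $K$ with $K'(0)=0$, only $a_j=o(j^{-2})$ (with an additional $O(j^{-2})$ alternating term if $K'(\pi)\neq 0$), and bounding your window and tail by absolute values then yields $\sum_{j=n}^{2n-1}\frac{j-n}{n}|a_j|+\sum_{j\geq 2n}|a_j|=O(n^{-1})$, not $O(n^{-2})$: the window has $n$ terms each of size up to $Cn^{-2}$, and $\sum_{j=n}^{2n-1}\frac{j-n}{nj^{2}}\sim\frac{\ln 2-1/2}{n}$. So "repeating the window-plus-tail argument with this improved decay" cannot produce the claimed rate. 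The $O(n^{-2})$ comes from cancellation inside $\sum_{j>\ell}a_j$, i.e.\ from an $L^{1}$ bound on the summed kernel $n\sum_{\ell=n}^{2n-1}\sum_{j>\ell}j^{-2}\cos(j\tau)$ tested against $K''$ --- precisely the content of Lemma \ref{lamma21} with $\eta=1$ as used in the proof of Theorem \ref{theorem3}. To repair the argument you must either import that lemma, perform a second summation by parts in $j$ (which requires control of the differences $a_{j+1}-a_j$, not just of $|a_j|$), or strengthen the hypothesis (e.g.\ $K''$ of bounded variation, so that $\int_0^{\pi}K''(\tau)\cos(j\tau)\,d\tau=O(j^{-1})$ and $|a_j|=O(j^{-3})$). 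The same issue affects the claimed $o(n^{-2})$ at fixed $r\neq 0$, where your Abel-summation step silently assumes decay of $|a_{j+1}-a_j|$ that does not follow from $|a_j|=O(j^{-2})$ alone.
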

In fact, one can simply check that the function $K(r)$ defined by the variable change Eq.\eqref{variable} is naturally satisfies $K'(r)\big{|}_{r=0}=0$. This result means that our VP-sum approximation has at least two order convergence rate for independent kernels.

Furthermore, the limitations of above error estimate Theorem \ref{theorem2} are twofold. First, for the kernel with higher order continuity conditions, the VP-sum may have higher convergence rate (for example, exponential decay when $K(r)$ becomes even function and vanishes at $r=\pi$) but not provided. Second, if $K(r)$ itself is a piecewise smooth function (for example, when $f(x)$ is required to localized as in Remark \ref{remark1}), Theorem \ref{theorem2} is not valid. For these concerns, we try to derive a more general relationship between $K(r)$ and its VP-sum as follows.  To prove this relationship, we need to use the following lemma.

    \begin{lemma}\label{lamma21}
	The following inequalities hold for $n\geq 1$
	\begin{equation}
		\mathlarger{\int}_{-\pi}^{\pi}\left|n^{\eta}\sum_{\ell=n}^{2n-1}\sum_{j=\ell+1}^{\infty}\dfrac{\sin\left(j\tau-\dfrac{\eta\pi}{2}\right)}{j^{\eta+1}}\right|d\tau\leq
		\begin{cases}
			&\dfrac{16(\eta+2)^2}{\left(1+\frac{1}{n}\right)^{\eta}},~\eta\geq 1,\\\\
			&32\ln\left(1+\dfrac{n}{n+1}\right)+58,~\eta=0.
		\end{cases}	
	\end{equation}
\end{lemma}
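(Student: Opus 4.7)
The plan is to reduce the double sum to a single series in $j$ and then use Abel summation with the Dirichlet kernel estimate. Interchanging the order of summation, a pair $(\ell,j)$ with $n \le \ell \le 2n-1$ and $j \ge \ell+1$ corresponds to fixed $j \ge n+1$ with multiplicity $\min(j-n,\,n)$, producing the head-plus-tail decomposition
\begin{equation*}
\sum_{\ell=n}^{2n-1}\sum_{j=\ell+1}^{\infty}\frac{\sin(j\tau-\eta\pi/2)}{j^{\eta+1}}
=\sum_{j=n+1}^{2n}\frac{(j-n)\sin(j\tau-\eta\pi/2)}{j^{\eta+1}}
+n\sum_{j=2n+1}^{\infty}\frac{\sin(j\tau-\eta\pi/2)}{j^{\eta+1}}.
\end{equation*}
This isolates a finite piece with coefficient $b_j:=(j-n)/j^{\eta+1}$ and an infinite tail with flat weight $n/j^{\eta+1}$.

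Next, I would apply Abel summation to each piece against the partial sums $A_k(\tau):=\sum_{j=1}^{k}\sin(j\tau-\eta\pi/2)$, which satisfy the standard Dirichlet bound $|A_k(\tau)|\le\min(k,\,1/|\sin(\tau/2)|)$. For the tail, summation by parts combined with the telescoping identity $\sum_{j=2n+1}^{\infty}(j^{-\eta-1}-(j+1)^{-\eta-1})=(2n+1)^{-\eta-1}$ yields
\begin{equation*}
\Bigl|\,n\!\sum_{j=2n+1}^{\infty}\frac{\sin(j\tau-\eta\pi/2)}{j^{\eta+1}}\Bigr|
\le \frac{2n}{|\sin(\tau/2)|\,(2n+1)^{\eta+1}}.
\end{equation*}
For the head, I would use that on $j\in[n+1,2n]$ the weight $b_j$ is monotone nondecreasing for $\eta\le 1$ and unimodal with maximum $\le 1/n^\eta$ in general, so its total variation is $O(1/n^\eta)$; Abel summation then gives a pointwise bound of the form $C(\eta)/(n^\eta|\sin(\tau/2)|)$. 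Multiplying by $n^\eta$, the full integrand is pointwise at most $C(\eta)/|\sin(\tau/2)|$ with $C(\eta)$ of order $(\eta+2)^2$.

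To handle the $L^1$-integral, I would split the domain into $|\tau|\le \pi/(n+1)$ (near singular region) and $\pi/(n+1)\le|\tau|\le\pi$ (oscillatory region). On the near region, I use the trivial estimate $|\sin|\le 1$ term-by-term and the bounds $\sum_{j=n+1}^{2n}(j-n)/j^{\eta+1}\le n^2/(n+1)^{\eta+1}$ together with $n\sum_{j=2n+1}^{\infty}1/j^{\eta+1}\le n/(\eta(2n)^\eta)$ for $\eta\ge 1$, which contribute a constant of order $(\eta+2)^2/(1+1/n)^\eta$ after multiplying by the measure $2\pi/(n+1)$. On the far region, I insert the Abel bound and use $1/|\sin(\tau/2)|\le\pi/|\tau|$ followed by $\int_{\pi/(n+1)}^{\pi}d\tau/|\tau|=\ln(n+1)$. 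For $\eta\ge 1$, the prefactor $(2n+1)^{-\eta}$ (times $n^\eta$) absorbs this logarithm into a bounded quantity of size $(1+1/n)^{-\eta}\cdot O(1)$; for $\eta=0$, the logarithm survives and reappears as the $32\ln(1+n/(n+1))$ term, while the remaining $O(1)$ contributions accumulate into the constant $58$.

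The hard part is the explicit constant bookkeeping: to land on the stated $16(\eta+2)^2$ and $32,\,58$ rather than inexplicit constants, every telescoping step and each Abel inequality must be tracked sharply without slack. In particular, I expect the most delicate step to be the case $\eta=0$, where the naive term-by-term bound on $n\sum_{j=2n+1}^{\infty}1/j$ diverges, so Abel summation must be used \emph{globally} (not only on the far region), and the Dirichlet cutoff $\min(k,\,1/|\sin(\tau/2)|)$ must be allowed to switch its active branch precisely at $|\tau|\sim 1/n$; the constant $58$ then arises from summing the near- and far-region contributions and the boundary terms of the summation-by-parts identity.
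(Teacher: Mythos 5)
The paper does not actually prove this lemma (it defers to Sharapudinov and Magomed-Kasumov), so your attempt can only be judged on its own terms --- and it has a genuine gap in the far region $\pi/(n+1)\le|\tau|\le\pi$. A single summation by parts against the Dirichlet partial sums $A_k(\tau)$, with the bound $|A_k(\tau)|\le 1/|\sin(\tau/2)|$, gives at best a pointwise estimate $n^{\eta}|S(\tau)|\le C(\eta)/|\sin(\tau/2)|$ with $C(\eta)$ independent of $n$: for the tail the prefactor is $n^{\eta}\cdot 2n/(2n+1)^{\eta+1}\approx 2^{-\eta}$, and for the head it is $n^{\eta}\cdot O(n^{-\eta})=O(1)$. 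Integrating $1/|\sin(\tau/2)|$ over the far region then yields $\asymp\ln(n+1)$, which is \emph{not} absorbed by anything: $n^{\eta}(2n+1)^{-\eta}$ is a constant in $n$, not a decaying factor, so your claim that it ``absorbs the logarithm'' for $\eta\ge1$ is false. Your fallback for $\eta=0$ also misreads the target: $\ln\bigl(1+\tfrac{n}{n+1}\bigr)\le\ln 2$ is bounded, so the stated bound is uniform in $n$ in \emph{both} cases and a surviving $\ln(n+1)$ cannot ``reappear'' as that term. As written, your argument proves only an $O(\log n)$ bound, which is too weak for the lemma and for its use in Theorem 2.2.

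The missing idea is the second-order cancellation coming from the average over $\ell$ (the de la Vall\'ee Poussin structure). With your (correct) coefficients $c_j=(j-n)j^{-\eta-1}$ on $n<j\le 2n$ and $c_j=nj^{-\eta-1}$ on $j>2n$, one must sum by parts \emph{twice}, pairing the second differences $\Delta^2c_j$ with the Fej\'er-type sums $B_j(\tau)=\sum_{k\le j}A_k(\tau)$, which satisfy $|B_j(\tau)|\le C/\sin^2(\tau/2)$. A direct computation gives $\sum_j|\Delta^2c_j|\le C(\eta+2)^2n^{-\eta-1}$ (the interior second differences of $j^{-\eta-1}$ contribute the $(\eta+1)(\eta+2)$ factor --- this is where the $(\eta+2)^2$ in the stated constant comes from --- and the two corners $j\approx n,2n$ contribute $O(n^{-\eta-1})$). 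This yields the pointwise bound $n^{\eta}|S(\tau)|\le C(\eta+2)^2/(n\sin^2(\tau/2))$, whose integral over $\pi/(n+1)\le|\tau|\le\pi$ is $O((\eta+2)^2)$ uniformly in $n$; combined with your (essentially correct) near-region estimates this closes the argument for $\eta\ge1$, and a variant handles $\eta=0$, where the corner terms rather than the interior sum dominate. Equivalently, you could invoke the quasi-convexity criterion bounding $\|\sum_jc_je^{ij\tau}\|_{L^1}$ by $\sum_j(j+1)|\Delta^2c_j|$. Without some form of this double summation by parts, the lemma is out of reach.
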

The proof of Lemma \ref{lamma21} is an extension of Lemma 2.3 from Ref.~\cite{sharapudinov2012approximation} and Statement 2 from Ref.~\cite{magomed2016approximation}, we omit it.

\begin{theorem}\label{theorem3}(The error bound of the VP sum)
	Denote $W^{\eta,m}(\Omega)$ the Sobolev spaces which satisfies
	\begin{equation}
		W^{\eta,m}(\Omega)=\{u\in L^{m}(\Omega)~|~\tilde{\partial}^{\alpha}u\in L^{m}(\Omega),|a|\leq \eta\},
	\end{equation}
    where $\tilde{\partial}$ indicates the generalized derivatives operation.
	Let $\digamma=\{\mathcal{T}_j,j=0,\cdots,\mathcal{P}\}$ becomes a finite division of the interval $[-\pi,\pi]$ which satisfies
	\begin{equation}
		-\pi=\mathcal{T}_0<\mathcal{T}_1<\cdots<\mathcal{T}_{\mathcal{P}}=\pi
	\end{equation}
such that the $2\pi$-periodic function $K(r)\in W^{\eta,\infty}([-\pi,\pi])$ can be converted into a function from $C^{\infty}([\mathcal{T}_i,\mathcal{T}_{i+1}])$ by redefining it at the endpoints on each closed interval $[\mathcal{T}_i,\mathcal{T}_{i+1}]$. Then the following estimate of the remainder term of the VP-sum holds:
\begin{equation}
	\left|V_n[K(r)]-K(r)\right|\leq	\begin{cases}
		\dfrac{16(\eta+2)^2M_{\eta+1}}{\pi n (n+1)^{\eta}},\,~\text{for} ~ \eta\geq1,\\\\
	    \dfrac{M_{\eta+1}}{n\pi}(58+32\ln 2),\,~\text{for}~\eta=0,
	\end{cases}
\end{equation}
where $M_{\eta+1}=\max\limits_r\left\{\left|K^{(\eta+1)}(r)\right|\right\}$.
\end{theorem}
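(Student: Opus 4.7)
The plan is to rewrite $V_n[K]-K$ as an integral inner product against a Fourier-tail kernel, transfer $\eta+1$ derivatives onto $K$ via integration by parts so that the resulting kernel matches the one estimated in Lemma \ref{lamma21}, and close with a H\"older-type bound using $|K^{(\eta+1)}|\leq M_{\eta+1}$.

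First, using $V_n[K]=\tfrac{1}{n}\sum_{\ell=n}^{2n-1}S_\ell[K]$ with $S_\ell[K]$ the $\ell$-th partial Fourier sum, and the pointwise convergence of the Fourier series (which holds since $K\in W^{\eta,\infty}$), I arrive at the representation
\[
V_n[K](r)-K(r)=-\frac{1}{n\pi}\int_{-\pi}^{\pi}K(\tau)\sum_{\ell=n}^{2n-1}\sum_{j=\ell+1}^{\infty}\cos\bigl(j(\tau-r)\bigr)\,d\tau,
\]
with the inner double sum interpreted in the Abel sense. I then perform $\eta+1$ successive integrations by parts on each smooth piece $[\mathcal{T}_i,\mathcal{T}_{i+1}]$, each of which divides by $j$ and shifts the trigonometric argument by $\pi/2$. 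Summing over the pieces, the boundary terms at interior nodes $\mathcal{T}_i$ telescope to zero because the $W^{\eta,\infty}$-regularity makes $K,K',\dots,K^{(\eta-1)}$ continuous at every $\mathcal{T}_i$, while the contributions at $\pm\pi$ cancel by $2\pi$-periodicity. What results is
\[
V_n[K](r)-K(r)=\frac{\pm 1}{n\pi}\int_{-\pi}^{\pi}K^{(\eta+1)}(\tau)\sum_{\ell=n}^{2n-1}\sum_{j=\ell+1}^{\infty}\frac{\sin\bigl(j(\tau-r)-\eta\pi/2\bigr)}{j^{\eta+1}}\,d\tau,
\]
in which $K^{(\eta+1)}$ is coupled to precisely the tail kernel whose $L^1$-norm is bounded by Lemma \ref{lamma21}.

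Bounding $|K^{(\eta+1)}|\leq M_{\eta+1}$, translating $\tau\mapsto\tau-r$ to exploit $2\pi$-periodicity, and invoking Lemma \ref{lamma21} for $\eta\geq 1$ then gives
\[
|V_n[K](r)-K(r)|\leq\frac{M_{\eta+1}}{n\pi}\cdot\frac{16(\eta+2)^{2}}{n^{\eta}(1+1/n)^{\eta}}=\frac{16(\eta+2)^{2}M_{\eta+1}}{\pi n(n+1)^{\eta}},
\]
using $n^{\eta}(1+1/n)^{\eta}=(n+1)^{\eta}$; for $\eta=0$, the second branch of the lemma together with $\ln(1+n/(n+1))\leq\ln 2$ delivers the constant $58+32\ln 2$.

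The main obstacle I anticipate is the rigorous justification of the $(\eta+1)$-th integration by parts: $K^{(\eta)}$ is merely in $L^\infty$ and is permitted by the hypotheses to jump at each $\mathcal{T}_i$, so its distributional derivative carries Dirac masses and the formal integration by parts leaves extra boundary contributions at the partition points. For the $K$'s actually arising in the paper — obtained via the variable change \eqref{variable} from a sufficiently smooth kernel $f$ — one verifies a posteriori that $K\in C^\eta$ and these terms vanish identically, so the direct argument sketched above applies. In the genuinely piecewise case one would either need to control the jump contributions separately by a pointwise analogue of Lemma \ref{lamma21} (the jump terms are again $j^{-(\eta+1)}$-weighted trigonometric tails, evaluated at fixed nodes $\mathcal{T}_i$), or to first establish the estimate under the stronger assumption $K\in C^\eta$ and then extend by a density argument exploiting the uniform $L^1$-control of the tail kernel.
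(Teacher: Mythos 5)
Your proposal follows essentially the same route as the paper's proof: both express the remainder as the Fourier tail integrated against $K$, split the integral over the partition $\digamma$, integrate by parts $\eta+1$ times so that the boundary terms cancel by the $W^{\eta,\infty}$-regularity and periodicity, and then invoke Lemma \ref{lamma21} together with $|K^{(\eta+1)}|\leq M_{\eta+1}$. Your variant of working with the convolution kernel $\cos\bigl(j(\tau-r)\bigr)$ and translating $\tau\mapsto\tau-r$ is a slightly cleaner way to land exactly on the integral of Lemma \ref{lamma21} than the paper's step of discarding the $j$-dependent factor $\cos(jr)$ inside the double sum, and your closing caveat correctly identifies the one point the paper glosses over, namely that cancellation of the $k=\eta$ boundary terms requires continuity of $K^{(\eta)}$ across the nodes.
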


\begin{proof}
	Recalling the definition of $n$-th VP-sum Eq.\eqref{Vn} with $n\geq 1$, the approximate error is given by
	\begin{equation}\label{divide}
     		V_n[K(r)]-K(r)=-\dfrac{1}{n\pi}\sum_{\ell=n}^{2n-1}\sum_{j=\ell+1}^{\infty}\cos(jr)\int_{-\pi}^{\pi}K(\tau)\cos(j\tau)d\tau.
	\end{equation}
    Using the division $\digamma$ defined on $[-\pi,\pi]$, we rewrite the integral in the right term of Eq.\eqref{divide} as the sum of the integrals over the integral domain formed by $\digamma$.

    \begin{equation}\label{splitting2}
    	\int_{-\pi}^{\pi}K(\tau)\cos(j\tau)d\tau=\sum_{i=1}^{\mathcal{P}}\int_{\mathcal{T}_{i-1}}^{\mathcal{T}_i}K(\tau)\cos(j\tau)d\tau.
    \end{equation}
     After $\eta+1$ times integrating by parts, we rewrite the integration at each subintegral in the form
     \begin{equation}\label{integralbypart}
     	\begin{split}
     	   \int_{\mathcal{T}_{i-1}}^{\mathcal{T}_i}K(\tau)\cos(j\tau)d\tau
     	   =&\sum_{k=0}^{\eta}\dfrac{K^{(k)}_{-}(\mathcal{T}_i)\sin\left(j\mathcal{T}_{i}-\dfrac{k\pi}{2}\right)-K^{(k)}_{+}(\mathcal{T}_{i-1})\sin\left(j\mathcal{T}_{i-1}-\dfrac{k\pi}{2}\right)}{j^{k+1}}\\
     	   &+\mathlarger{\int}_{\mathcal{T}_{i-1}}^{\mathcal{T}_i}K^{(\eta+1)}(\tau)\dfrac{\sin\left(j\tau-\dfrac{\eta\pi}{2}\right)}{j^{\eta+1}}d\tau,
     	\end{split}
     \end{equation}
 where the subscripts $-$ and $+$ indicate the left sided limit and the right sided limit, respectively. Substituting the expressions Eq.\eqref{integralbypart} and Eq.\eqref{splitting2} into the remainder term Eq.\eqref{divide} and recalling $K(r)$ is a $2\pi$-periodic function, therefore,
 \begin{equation}\label{estimate1}
 	\begin{split}
 	   V_n[K(r)]-K(r)=&-\dfrac{1}{n\pi}\sum_{\ell=n}^{2n-1}\sum_{j=\ell+1}^{\infty}\cos(jr)\sum_{i=1}^{\mathcal{P}}\sum_{k=0}^{\eta}\bigggg{[}\dfrac{K^{(k)}_{-}(\mathcal{T}_i)\sin\left(j\mathcal{T}_{i}-\dfrac{k\pi}{2}\right)}{j^{k+1}}\\
 	   &-\dfrac{K^{(k)}_{+}(\mathcal{T}_{i-1})\sin\left(j\mathcal{T}_{i-1}-\dfrac{k\pi}{2}\right)}{j^{k+1}}\bigggg{]}\\
 	   &-\dfrac{1}{n\pi}\sum_{\ell=n}^{2n-1}\sum_{j=\ell+1}^{\infty}\cos(jr)\sum_{i=1}^{\mathcal{P}}\mathlarger{\int}_{\mathcal{T}_{i-1}}^{\mathcal{T}_i}K^{(\eta+1)}(\tau)\dfrac{\sin\left(j\tau-\dfrac{\eta\pi}{2}\right)}{j^{\eta+1}}d\tau.
 	\end{split}
 \end{equation}
Note that the first term in Eq.\eqref{estimate1} vanishes because $K(r)\in W^{\eta,\infty}([-\pi,\pi])$ such that the left-hand limits and right-hand limits of $\{K^{(k)}(r),k=0,1,\cdots,\eta\}$ at the endpoint in the division $\digamma$ are equal. Taking Lemma \ref{lamma21} into Eq.\eqref{estimate1}, for $\eta\geq1$, we obtain
\begin{equation}\label{estimate5}
	\begin{split}
		 \left|V_n[K(r)]-K(r)\right|&=\left|\dfrac{1}{n\pi}\sum_{\ell=n}^{2n-1}\sum_{j=\ell+1}^{\infty}\cos(jr)\sum_{i=1}^{\mathcal{P}}\mathlarger{\int}_{\mathcal{T}_{i-1}}^{\mathcal{T}_i}K^{(\eta+1)}(\tau)\dfrac{\sin\left(j\tau-\dfrac{\eta\pi}{2}\right)}{j^{\eta+1}}d\tau\right|\\
		&\leq \dfrac{M_{\eta+1}}{n^{\eta+1}\pi}\mathlarger{\int}_{-\pi}^{\pi}\left|n^{\eta}\sum_{\ell=n}^{2n-1}\sum_{j=\ell+1}^{\infty}\dfrac{\sin\left(j\tau-\dfrac{\eta\pi}{2}\right)}{j^{\eta+1}}\right|d\tau\\
		&\leq\dfrac{16(\eta+2)^2M_{\eta+1}}{\pi n (n+1)^{\eta}}.
	\end{split}
\end{equation}
Similarly,
\begin{equation}
	\begin{split}
	\left|V_n[K(r)]-K(r)\right|&\leq \dfrac{M_1}{n\pi}\left[32\ln\left(1+\frac{n}{n+1}\right)+58\right]\\
	&\leq \dfrac{M_1}{n\pi}(32\ln2+58)
	\end{split}
\end{equation}
when $\eta=0$.
\end{proof}

Theorem \ref{theorem3} states that if $K(r)$ is a smooth function except the origin, and is $\eta$ times continuously differentiable at $r=0$, and $K^{(\eta+1)}(r)$ bounded, then the VP-sum of $K(r)$ has at least $(\eta+1)$-order convergence rate. For example, the Gaussian kernel $f(x)=e^{-x^2/4\delta}$ is an even function that the corresponding $K(r)$ is infinitely differentiable such that the VP-sum of the Gaussian kernel has spectral convergence. For general cases, assuming that the approximate interval we interested in is $[0,\mathfrak{F}]$ and a fast decreasing function may connect after $\mathfrak{F}$ to localize the kernel function as in Remark \ref{remark1}, then the following convergence rate of the VP-sum of the new function $f^{*}(x)$ at $[0,\mathfrak{F}]$ holds
\begin{theorem}\label{backtof}
	 Assume that $f^{*}(x)$ is a smooth function defined on the positive axis except the connecting point $x=\mathfrak{F}$, and fast decrease at $x\rightarrow\infty$. After the variable change and the even and periodic prolongation, if $f^{*}(x)$ has bounded derivatives at $x=\mathfrak{F}$ until $\eta$-th order, then the convergence rate of the VP-sum of the resulting function $K(r)$ is at least $\mathcal{O}(n^{-\eta-1})$.
\end{theorem}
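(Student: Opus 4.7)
The plan is to reduce the statement to a direct invocation of Theorem \ref{theorem3} by carefully locating the singular points of $K(r)$ on $[-\pi,\pi]$ and verifying the Sobolev-type regularity $K\in W^{\eta,\infty}([-\pi,\pi])$. First I would identify where $K(r)$ can fail to be smooth. The change of variables $x=-n_c\log((1+\cos r)/2)$ is a smooth bijection from $(0,\pi)$ onto $(0,\infty)$ with Jacobian $dx/dr=n_c\sin r/(1+\cos r)$, which is bounded and nonzero on any closed subinterval of $(0,\pi)$. Therefore the only potential non-smoothness points of $K(r)$ on $[0,\pi]$ are $r=0$ (image of $x=0$), $r=\pi$ (image of $x=\infty$), and $r=\mathfrak{R}:=\arccos(2e^{-\mathfrak{F}/n_c}-1)\in(0,\pi)$ (image of the connecting point $x=\mathfrak{F}$). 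After the even and periodic prolongation, the analogous points $r=-\mathfrak{R}$ and $r=\pm\pi$ are added on $[-\pi,0]$.

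Second, I would set up the partition $\digamma=\{-\pi,-\mathfrak{R},0,\mathfrak{R},\pi\}$ required by Theorem \ref{theorem3}, and argue that $K$ can be extended to an element of $C^\infty$ on each closed subinterval by redefining endpoint values. On intervals avoiding $\pm\mathfrak{R}$, smoothness of $K$ follows from the smoothness of $f^*$ combined with the fact that the composition $r\mapsto x(r)$ is $C^\infty$ on $(0,\pi)$ with nonvanishing derivative, and the fast decay of $f^*$ at infinity guarantees smoothness up to $r=\pm\pi$. At $r=0$, the even/periodic prolongation combined with the fact that all odd $r$-derivatives of the variable change vanish at $r=0$ yields an even $C^\infty$ extension; smoothness of $f^*$ at $x=0$ is inherited by $K$ at $r=0$.

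Third, I would check the global Sobolev condition $K\in W^{\eta,\infty}([-\pi,\pi])$, which is equivalent to requiring that one-sided limits of $K^{(k)}$ at $\pm\mathfrak{R}$ agree for $k=0,1,\ldots,\eta$ and that these limits are bounded. Applying Faà di Bruno's formula (or iterated chain rule) to $K(r)=f^*(x(r))$ at $r=\pm\mathfrak{R}$, each $K^{(k)}(\pm\mathfrak{R})$ is a polynomial in the derivatives $f^{*(j)}(\mathfrak{F}^{\pm})$ and in the derivatives of $x(r)$, which are smooth and finite at $r=\pm\mathfrak{R}$. Since $f^*$ has bounded and matching one-sided derivatives up to order $\eta$ at $\mathfrak{F}$, the required matching at $\pm\mathfrak{R}$ follows, placing $K$ in $W^{\eta,\infty}([-\pi,\pi])$. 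Setting $M_{\eta+1}=\max_r|K^{(\eta+1)}(r)|$, Theorem \ref{theorem3} then yields $|V_n[K(r)]-K(r)|=\mathcal{O}(n^{-\eta-1})$ as claimed.

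The main obstacle I anticipate is the bookkeeping at $r=0$ and $r=\pm\pi$: one must verify that the singular Jacobian behavior of the variable change near those endpoints (where $\sin r/(1+\cos r)$ either vanishes or blows up) does not spoil the smoothness of $K$. This should work out because $f^*$ is genuinely smooth at the endpoints $x=0$ and $x=\infty$ (in the sense of fast decay), and the parity introduced by the cosine in the variable change forces cancellation of the odd terms that would otherwise obstruct smoothness; but it requires a careful local expansion rather than a one-line chain-rule argument.
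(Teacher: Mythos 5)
Your proposal is correct and follows essentially the same route as the paper's proof: verify via Fa\`a di Bruno's formula and the evenness of the substitution $r\mapsto x(r)$ that the prolonged $K(r)$ lies in $W^{\eta,\infty}([-\pi,\pi])$ (smooth across $r=0$ and $r=\pm\pi$, with matching one-sided derivatives up to order $\eta$ at the image of $x=\mathfrak{F}$), and then invoke Theorem \ref{theorem3}. Your write-up is in fact more explicit than the paper's about the partition points $\pm\mathfrak{R}$ and the endpoint behavior at $r=\pm\pi$, but the underlying argument is the same.
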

By Ref.\cite{huang2006chain}, Theorem \ref{backtof} can be derived by the following theorem, which is a frequently-used formula in obtaining higher derivatives.
	\begin{theorem}
		(Fa{\`a} di Bruno's Formula) If $f$ and $\mathfrak{S}$ are functions with a sufficient number of derivatives, then
		\begin{equation}
			\frac{d^{m}}{d r^{m}} f(\mathfrak{S}(r))=\sum \frac{m !}{b_{1} ! b_{2} ! \cdots b_{m} !} f^{(k)}(\mathfrak{S}(r))\left(\frac{\mathfrak{S}^{\prime}(r)}{1 !}\right)^{b_{1}}\left(\frac{\mathfrak{S}^{\prime \prime}(r)}{2 !}\right)^{b_{2}} \cdots\left(\frac{\mathfrak{S}^{(m)}(r)}{m !}\right)^{b_{m}}
		\end{equation}
	where the sum is over all different solutions in nonnegative integers $b_1,\cdots,b_m$ of $b_1+2b_2+\cdots+mb_m=m$, and $k:=b_1+\cdots+b_m$.
	\end{theorem}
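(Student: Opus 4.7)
The plan is induction on $m$. The base case $m=1$ is the ordinary chain rule $\frac{d}{dr}f(\mathfrak{S}(r))=f'(\mathfrak{S}(r))\mathfrak{S}'(r)$, which matches the unique solution $b_1=1$ of the constraint $b_1=1$, with $k=1$. For the inductive step, I would assume the formula at order $m$ and apply $d/dr$ term by term to its right-hand side, using the product and chain rules.

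Differentiating a fixed summand indexed by $(b_1,\dots,b_m)$ produces two families of new terms. (a) Differentiating the outer factor $f^{(k)}(\mathfrak{S}(r))$ yields an extra $f^{(k+1)}(\mathfrak{S}(r))\mathfrak{S}'(r)$; absorbing $\mathfrak{S}'(r)=1!\cdot(\mathfrak{S}'(r)/1!)$ shifts the index to $(b_1+1,b_2,\dots,b_m,0)$ at order $m+1$, with $k$ incremented by $1$. (b) For each $i$ with $b_i\geq 1$, differentiating one of the $b_i$ copies of $\mathfrak{S}^{(i)}(r)/i!$ produces $b_i\cdot\mathfrak{S}^{(i+1)}(r)/i!=b_i(i+1)\cdot\bigl(\mathfrak{S}^{(i+1)}(r)/(i+1)!\bigr)$, shifting the index to the tuple obtained by decrementing $b_i$ and incrementing $b_{i+1}$, where $b_{m+1}$ is a newly opened slot initially equal to $0$.

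Next I would reorganise the sum by collecting the ancestors of each target tuple $(b_1',\dots,b_{m+1}')$ satisfying $\sum_{j=1}^{m+1} j\, b_j'=m+1$. The (a) ancestor is $(b_1'-1,b_2',\dots,b_m')$ when $b_1'\geq 1$, contributing $m! b_1'/\prod_j b_j'!$ to the coefficient of the target term. The (b) ancestor at index $i$ is the tuple with $b_i'+1$ and $b_{i+1}'-1$ in those two slots, valid when $b_{i+1}'\geq 1$, contributing $m!(i+1) b_{i+1}'/\prod_j b_j'!$ after absorbing the factor $(i+1)$ and using $b_i''=b_i'+1$ to cancel the factorial. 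Summing these contributions yields the required identity
\begin{equation*}
\frac{(m+1)!}{\prod_j b_j'!}=\frac{m!}{\prod_j b_j'!}\Bigl(b_1'+\sum_{i=1}^{m}(i+1)b_{i+1}'\Bigr)=\frac{m!}{\prod_j b_j'!}\sum_{j=1}^{m+1}j\,b_j',
\end{equation*}
which reduces precisely to the weight constraint $\sum_j j\, b_j'=m+1$ defining the index set at order $m+1$.

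The main obstacle is this bookkeeping step: one must simultaneously track the multinomial factor $m!/\prod b_j!$ and the factorial-power denominators $(j!)^{b_j}$ embedded in the derivative factors, while correctly handling the boundary cases $b_1'=0$ (no (a) ancestor) and the opening of the new slot $b_{m+1}'$, which is accessed only via (b) at $i=m$. A cleaner alternative, which I would fall back on if the direct bookkeeping becomes opaque, is the partition-theoretic proof: iterated applications of the product and chain rules to $f(\mathfrak{S}(r))$ produce one term per set partition of $\{1,\dots,m\}$, and collecting partitions sharing the block-size profile $(b_1,\dots,b_m)$ yields the claimed coefficient because the number of such partitions is exactly $m!/\bigl(\prod_i b_i!\prod_i (i!)^{b_i}\bigr)$, with $f^{(k)}(\mathfrak{S}(r))$ attached to the $k$ blocks and each $\mathfrak{S}^{(i)}$ attached to a block of size $i$.
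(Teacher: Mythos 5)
Your proposal is correct, but note that the paper does not prove this statement at all: Fa\`a di Bruno's formula is quoted as a classical result with a citation to the chain-rules-for-higher-derivatives reference, and is used only as a tool in the proof of the subsequent theorem on the convergence rate of the VP-sum. So there is no in-paper argument to compare against; what you have supplied is a self-contained proof that the paper delegates to the literature. Your induction is sound: the base case matches the unique solution $b_1=1$; the two families of descendants (differentiating the outer $f^{(k)}$ versus one inner factor $\mathfrak{S}^{(i)}/i!$) are identified correctly; and the ancestor-collection computation, giving the coefficient
\[
\frac{m!}{\prod_j b_j'!}\Bigl(b_1'+\sum_{i=1}^{m}(i+1)b_{i+1}'\Bigr)=\frac{m!}{\prod_j b_j'!}\sum_{j=1}^{m+1}j\,b_j'=\frac{(m+1)!}{\prod_j b_j'!},
\]
is exactly right, including the observation that the boundary cases $b_1'=0$ and $b_{i+1}'=0$ are handled automatically because the corresponding summands vanish. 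The consistency of the order of $f^{(k)}$ across ancestors (the (a)-ancestor carries $k'-1$ and is bumped to $k'$, each (b)-ancestor already carries $k'$) also checks out. The partition-theoretic fallback you mention is the other standard proof and is equally valid; either route would serve if the paper wished to include a proof rather than a citation.
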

Now back to the proof of Theorem \ref{backtof}.

\begin{proof}[Proof of Theorem \ref{backtof}]
Recalling the variable change given in Eq.\eqref{variable}, the corresponding $\mathfrak{S}(r)$ reads
\begin{equation}
	\mathfrak{S}(r)=-n_c\log\left(\dfrac{1+\cos r}{2}\right),~r\in[0,\pi].
\end{equation}
It is observed that $\mathfrak{S}(r)$ is an even function such that $\mathfrak{S}^{(k)}(r)$ vanishes and $\mathfrak{S}^{(k)}(r)\sim{O}(r)$ as $r\rightarrow0$ when $k$ is an odd number. When $k$ is an even number, it can be checked that $K^{(k)}_{-}(0)=K^{(k)}_{+}(0)=\partial^{(k)}f(\mathfrak{S}(0))$. Above all, $K(r)$ is infinitely differentiable at the origin, i.e., $K(r)\in W^{\eta,\infty}[-\pi,\pi]$ due to the localization at $\mathfrak{S}^{(k)}(r)=\mathfrak{F}$. After the use of Theorem \ref{theorem3}, we finish the proof.
\end{proof}

\subsection{SOE approximations of frequently-used kernels}
\subsubsection{The Gaussian kernel}
We investigate the performance of the VPMR approach for the SOE approximations of the Gaussian kernel
which is frequently required in many applications \cite{jiang2019fast,SpivakThe}.
The inverse Laplace transform representation of the Gaussian kernel $e^{- x^2/4\delta}$ is,
\begin{equation}
e^{-\frac{x^2}{4\delta}}=\frac{1}{2\pi i}\int_{\Gamma}e^{z}\sqrt{\frac{\pi}{z}}e^{-\frac{\sqrt{z}|x|}{\sqrt{\delta}}}dz,
\end{equation}
where $\Gamma$ can be any contour in the complex plane that starts from $-\infty$ in the third quadrant, goes around
$0$ and returns back to $-\infty$ in the second quadrant. There are mainly three kinds of contours, including the parabolic,
the hyperbolic and the modified Talbot contours \cite{lopez2006spectral,trefethen2006talbot}. All these contours have
certain parameters that need to be optimized in order to achieve optimal convergence rate \cite{talbot1979accurate,weideman2007parabolic,weideman2010improved}.
Alternatively, one can obtain an SOE approximation of Gaussian kernel by the best rational approximation using the residue theorem and Cauchy's theorem \cite{trefethen2006talbot}.

We perform the comparison on SOE approximations of the Gaussian kernel using our VPMR method given in Section \ref{VPMR} and
other existing work discussed above. We take $\delta=1$. The maximum error
\begin{equation}\label{mearsurement}
E_{\infty}=\max\limits_{x\in (0,100]}\left|e^{-\frac{x^2}{4\delta}}-\sum_{j}m_je^{-s_jx}\right|
\end{equation}
is used to measure the performance, where $100000$ monitoring points are randomly sampled from $[10^{-5},10^2]$
to estimate the maximum. For the comparison,
the results of contour integrals and the best rational approximations are taken
from Jiang \cite{jiang2019fast}, where the methods are fully optimized and the contours are discretized via the midpoint rule.
And the MR is used to reduce the number of exponentials.
For the VPMR, the parameter $n_c$ is set to be $\lceil n/4 \rceil$, thus the maximum exponent is about $8$.
The results are displayed in Figure \ref{ComparisonGaussian}. The results demonstrate that the VPMR has advantages
in both convergence rate and accuracy.
For all five methods, an error level of $10^{-13}$ can be achieved with the reduced number $P$ of exponentials in the SOE approximation not bigger than
20.  It was measured \cite{jiang2019fast} that the convergence rate
is about $O(6.3^{-n})$ of all three contours, and $O(7.5^{-n})$ for the best rational approximation. The VPMR achieves a convergence rate of about $O(9.0^{-n})$.

\begin{figure}[tb]
\centering
	\includegraphics[width=0.60\textwidth]{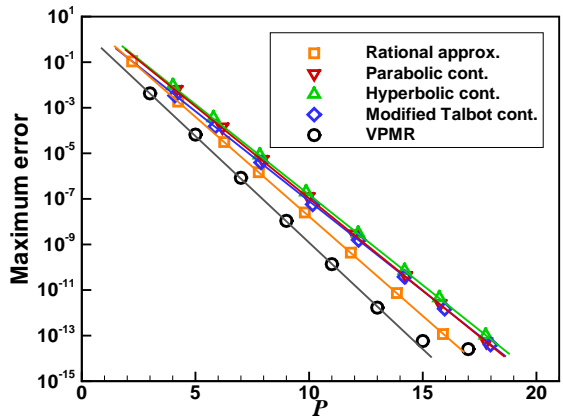}
\caption{Maximum errors of the SOE approximations of the Gaussian kernel with the number of exponentials.
Data are shown for five SOE methods:  the best rational approximation, the parabolic contour,
the hyperbolic contour, the modified Talbot contour and the VPMR. The dash-dotted lines with different color indicate the fitting lines
of corresponding SOE methods.}
\label{ComparisonGaussian}
\end{figure}

\subsubsection{SOE approximations of other important kernels}
We study the performance of the SOE to approximate the exact kernels with the increase of $P$. Four different kernels, the Mat$\acute{\text{e}}$rn kernel, the power function, the Ewald splitting kernel, and the Helmholtz kernel, are used to test the algorithm. Unlike the Gaussian kernel, some of these kernels have complicated forms thus are difficult to obtain SOE via theoretical approach. We perform the comparison on SOE approximations using the classical Prony's method \cite{hamming2012numerical} which requires only the value of kernel at discrete points. We use the Prony Toolbox \cite{PronyToolbox} which is a software tool in MATLAB in order to perform the Prony analysis.

The first one is the Mat$\acute{\text{e}}$rn kernel \cite{RogerInterpolation,Williams2005Gaussian,Alexander2018Gaussian,Dral2019MLatom}, often used as a covariance function in modeling Gaussian processes and machine learning. The Mat$\acute{\text{e}}$rn kernel of order $\nu>0$ is defined as
\begin{equation}\label{MaternKernel}
f(x)=\dfrac{(\sqrt{2\nu}|x|)^{\nu} K_\nu(\sqrt{2\nu}|x|)}{2^{\nu-1}\Gamma(\nu)},
\end{equation}
where $\nu$ is the smoothness parameter, $K_\nu$ is the modified Bessel function of the second kind of order $\nu$ and $\Gamma$ is the Gamma function. The definition Eq.\eqref{MaternKernel} satisfies that $f(0)=1$ for any $\nu$ and $f(x)$ has high-order differentiability at the origin when $\nu$ is larger. In this example, we employ VPMR method to perform the comparison on SOE approximations of the Mat\'{e}rn kernel with different smoothness $\nu$ and $\delta=0$. Numerical results are given in Figure \ref{KernelFunction} (a), which demonstrate that the convergence rate of the SOE becomes higher for better smoothness. The efficiency of the resulting SOE produced by the VPMR is very attractive and it needs less than $40$ terms for all $\nu$ to achieve $10^{-9}$ maximum error, whereas the convergence rate of the Prony method is slow and only $10^{-3}$ for all $\nu$ can be achieved.

\begin{figure}[!h]
	\centering
	\includegraphics[width=0.45\textwidth]{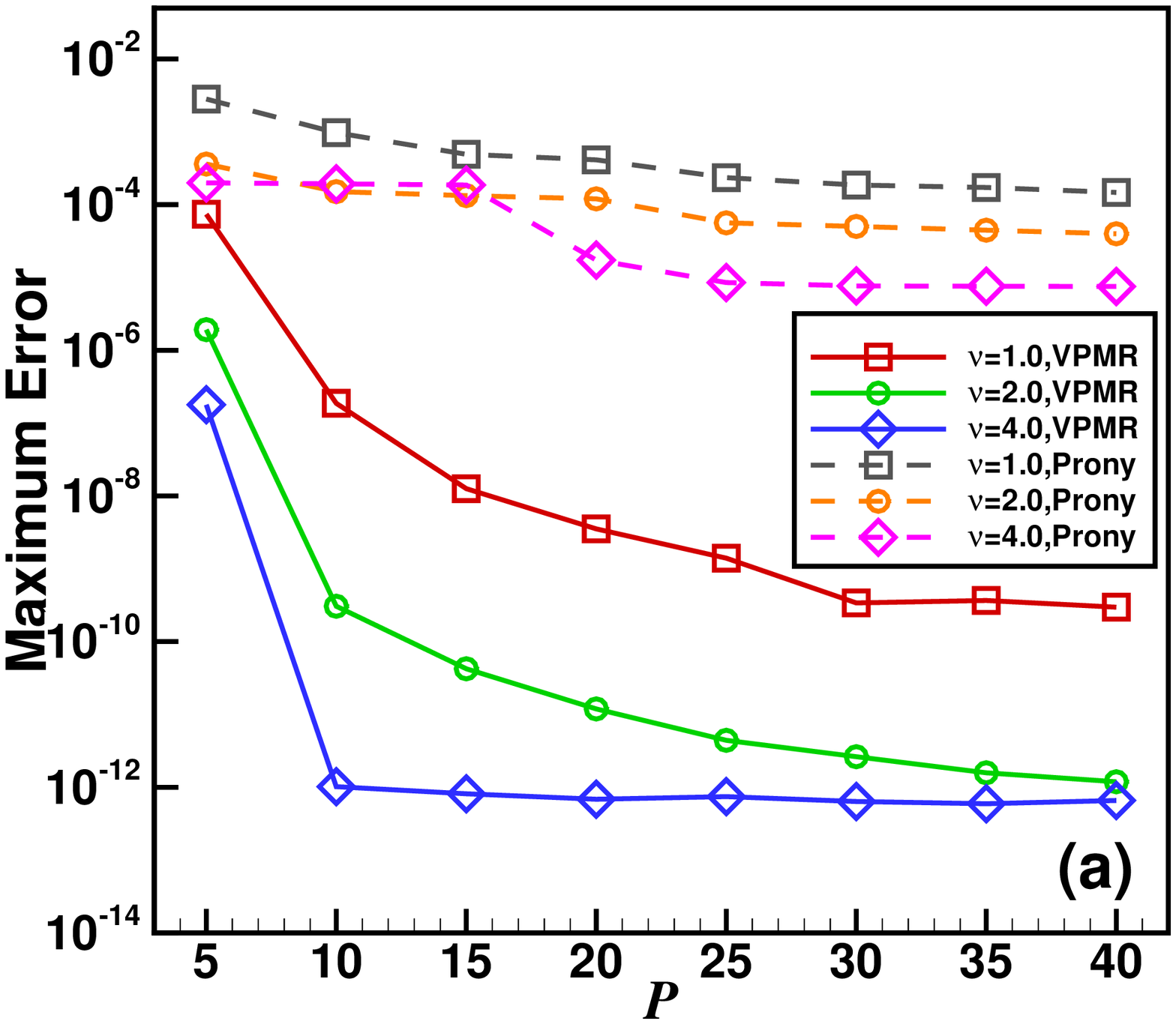}
	\includegraphics[width=0.45\textwidth]{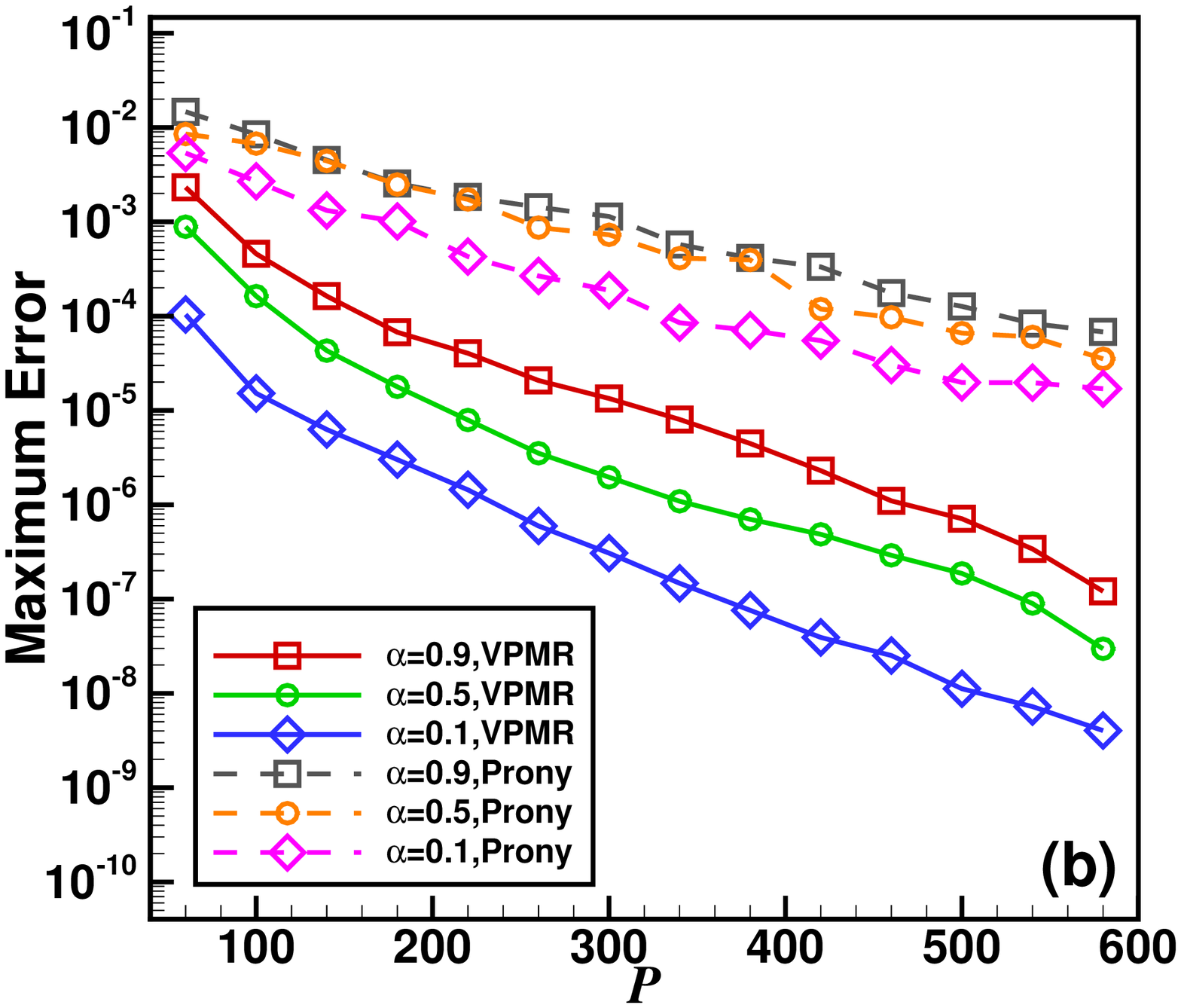}
	\includegraphics[width=0.45\textwidth]{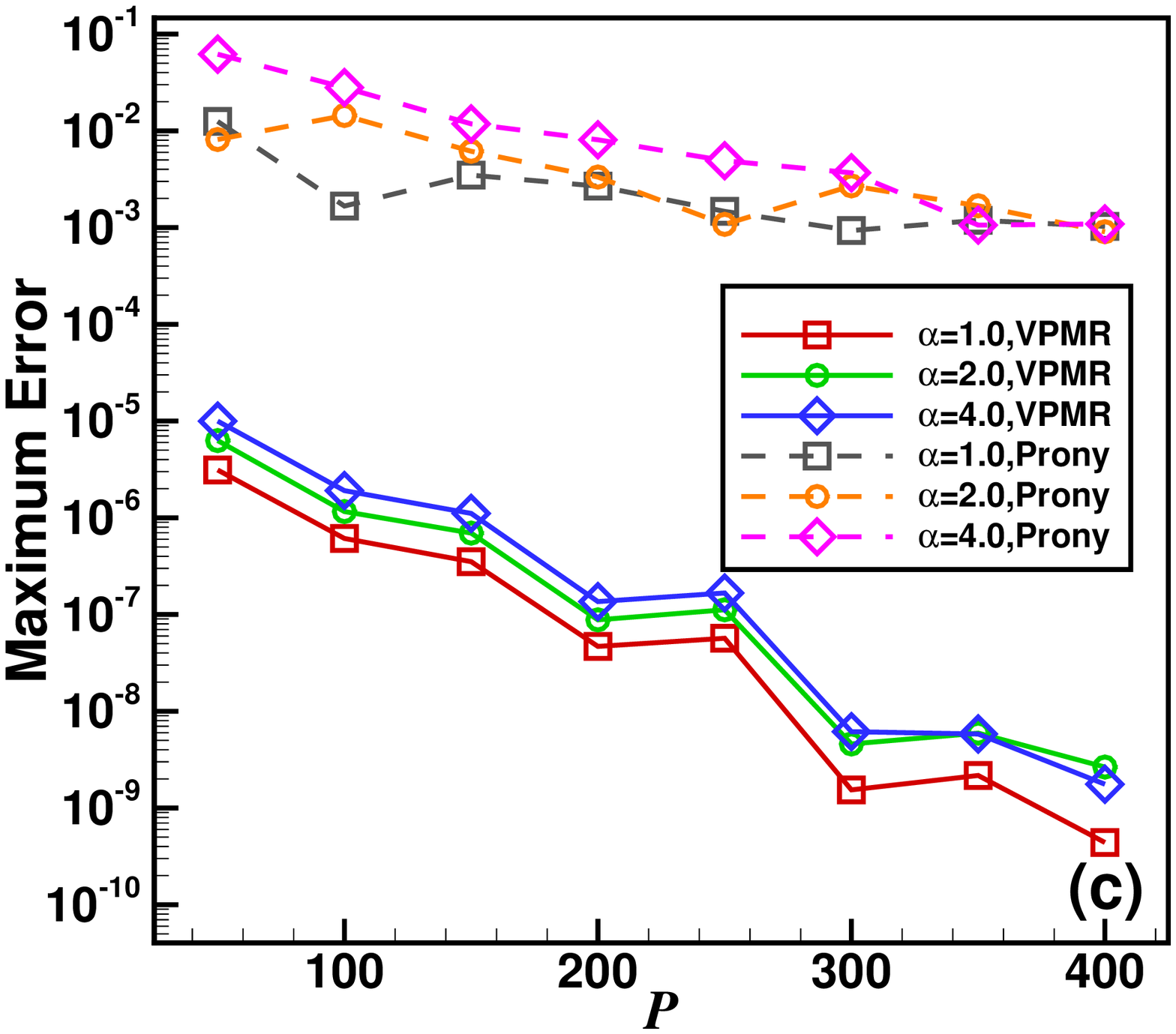}
	\includegraphics[width=0.45\textwidth]{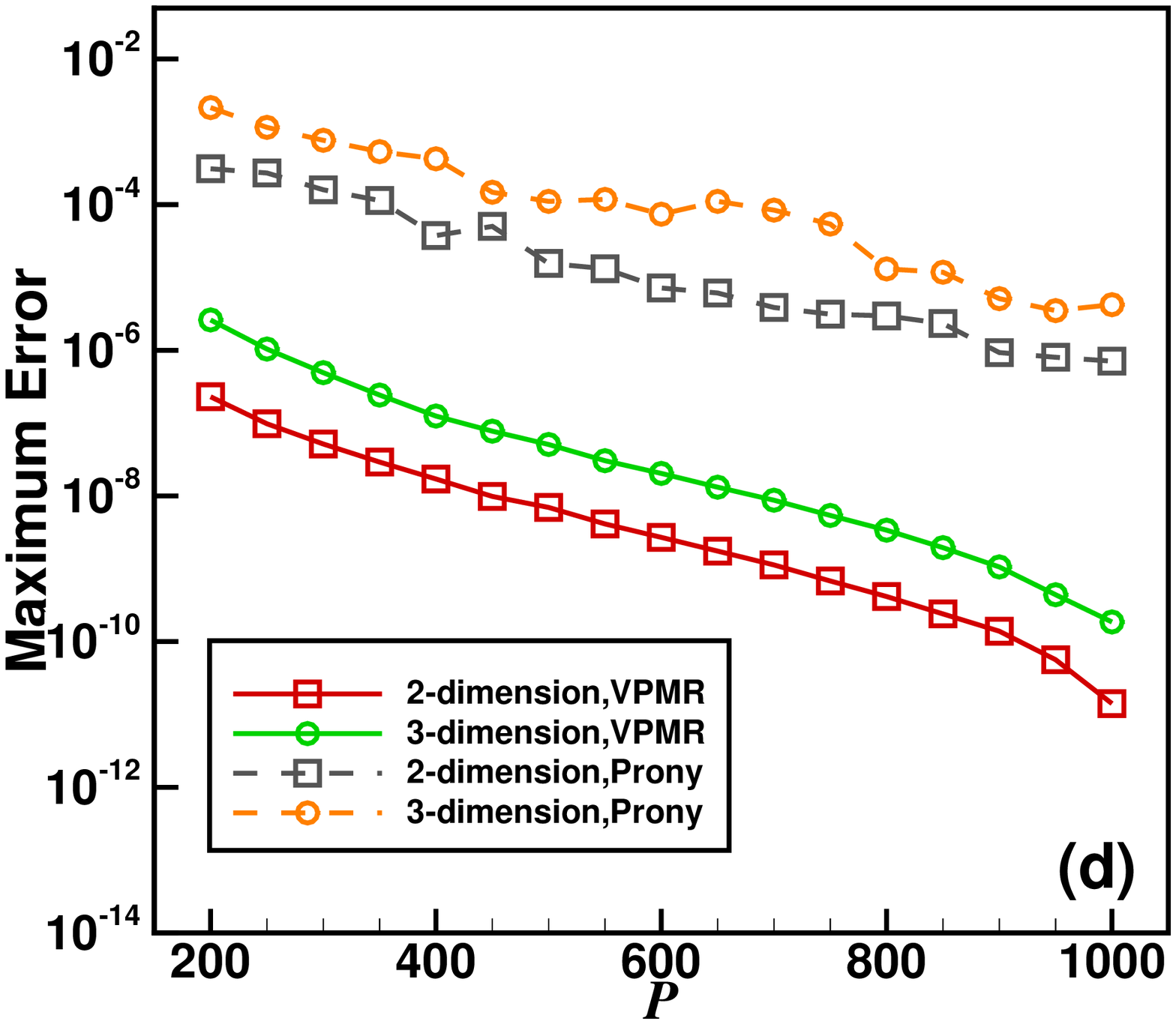}
	\caption{Maximum errors of the SOE approximations of different kernels, on comparison of the VPMR with the Prony, (a) the Mat\'{e}rn kernel, (b) the power function kernel, (c) the Ewald splitting kernel, and (d) Helmholtz kernel, as fucntions of the number of exponentials.}
	\label{KernelFunction}
\end{figure}

Secondly, we consider the weak-singular power function with $f(x)=x^{\alpha-1}$, which is frequently required in computational physics \cite{Yong2018The,Beylkin2005Approximation}. The power function has the following inverse Laplace transform expression \cite{Beylkin2010Approximation},
\begin{equation}\label{powerLaplace}
	x^{\alpha-1}=\dfrac{1}{\Gamma(1-\alpha)}\int_{-\infty}^{\infty}e^{-e^{t}x+(1-\alpha)t}dt.
\end{equation}
A suitable quadrature rule to Eq.\eqref{powerLaplace} yields an explicit discretization to obtain a sum of exponentials. Although constructing the SOE approximation from integral representation can achieve a given accuracy, the uncontrollable increase of exponent values limits the practical use (as the convergence conditions require a tolerable exponent value, see Ref.\cite{Lubich1993Runge} and Theorem \ref{theorem1} in this paper). The results on SOE approximation of the power function using the VPMR method are given in Figure \ref{KernelFunction} (b), for three different values of $\alpha$. The value of $\delta$ is set to $0.05$ and the maximal exponent is set to about $5$. Its needs $600$ terms for all $\alpha$ to achieve $10^{-8}$ maximum error. This convergence is fast considering that the SOE based on the inverse Laplace transform requires the maximal exponent to be about $10^3$ to achieve the same level of accuracy. Furthermore, comparing with the results conducted by the Prony method, the convergence rate of the VPMR method is much more fast.

When $\alpha=0$, the singular power function with expression $1/x$ is referred as the Green's function of 3D Poisson's equation with free boundary condition, which is frequently used in fast electrostatic sums. In practice, it is not straightforward to evaluate the electrostatic interaction because $1/x$ has properties of both short range and long range. The Ewald method solves this problem by dividing the Coulomb potential into near and far parts \cite{ewald1921berechnung},
\begin{equation}
	\frac{1}{x}=\frac{\text{erfc}\left(\Lambda x\right)}{x}+\frac{\text{erf}\left(\Lambda x\right)}{x},
\end{equation}
where the parameter $\Lambda$ describes the inverse of cutoff radius and balances the proportion of computational cost between these two parts. The near part is often truncated at a suitable cutoff radius and computed as an explicit sum of pairwise interactions. Here we study the SOE for the far part Ewald kernel $\text{erf}(\Lambda x)/x$, which is often studied in Fourier space \cite{JinLiXuZhao2020},
for different values of $\Lambda$.  Figure \ref{KernelFunction} (c) shows the errors of the SOE approximation as a function of $P$ for different $\Lambda$. We observe that the VPMR needs $400$ terms for all $\Lambda$ to achieve $10^{-9}$ maximum error, whereas the Prony has about $10^{-3}$ maximum error with the same number of terms which has six orders of magnitude lower than the VPMR.

Finally, we test the performance of the VPMR for the Helmholtz kernel which has a strong oscillation. The kernel is the Green's function of the Helmholtz equation. In 2D and 3D, they are given by,
\begin{equation}\label{2dgreen}
    f_{\text{2D}}(x)=\dfrac{i}{4}H_0^{(1)}(kx),~~~~f_{\text{3D}}(x)=\dfrac{e^{ikx}}{4\pi x},
\end{equation}
where $H_0^{(1)}$ is the zeroth order Hankel function of the first kind. The Helmholtz equation with high wave number is notoriously difficult to solve numerically, since the larger the value of $k$, the stronger the oscillation of the Helmoltz kernel.
An efficient SOE approximation for Helmholtz kernel is also difficult due to this issue. We study the performance of the VPMR approach for the SOE approximations of 2D and 3D Helmholtz kernels on domain $[\delta,10]$ with $\delta\ll1$. The results are given in Figure \ref{KernelFunction} (d) with parameters $k=50$ (a large wave number) and $\delta=0.05$. One can observe that $1000$ exponentials achieve the error levels of $10^{-9}$  and $10^{-10}$ for 2D and 3D cases, respectively, whereas the Prony achieves the lower levels of $10^{-5}$ for both cases. Note that the 2D/3D Helmholtz kernel yields the well known Sommerfeld integral representation
\begin{equation}\label{Sommerfeld}
	f_{2D}(\bm{r})=\dfrac{1}{4\pi}\int_{-\infty}^{\infty} \dfrac{e^{-\sqrt{\lambda^{2}-k^{2}}\left|r_y\right|}}{\sqrt{\lambda^{2}-k^{2}}} e^{i \lambda r_x} d \lambda
\end{equation}
and
\begin{equation}\label{Sommerfeld3D}
	f_{3D}(\bm{r})=i\int_{0}^{\infty} \dfrac{k_{\rho}}{k_z}J_0(k_{\rho}r_{\rho})e^{ik_z|r_z|}dk_{\rho},
\end{equation} 	
where $\bm{r}=(r_x,r_y)$ for 2D, $\bm{r}=(r_{\rho}, r_{\theta}, r_z)$ for 3D, $k_z=\sqrt{k^2-k_{\rho}^2}$, and $J_0$ is the zeroth order Bessel function of the first kind. The Sommerfeld integrals Eq.\eqref{Sommerfeld} and Eq.\eqref{Sommerfeld3D} play a central role in scattering theory. Numerical calculation of the Sommerfeld integration requires contour deformation to avoid the square
root singularity in the integrand and additional approaches to deal with the probable highly oscillatory integrand. The SOE approximation provided by the VPMR method may furnish an useful tool to design fast algorithm for wide applications in electromagnetic scattering field, which will be explored in our future work.

\section{The Application of SOE on Convolution Quadrature}\label{fastconv}
One of important applications of the SOE is to quickly approximate convolution quadrature. In this section, we consider the approximation of convolution
quadrature between a given kernel $f(t)$ and smooth function $g(t)$ as follows,
\begin{equation}\label{TimeConv}
y(t)=f*g=\int_0^tf(t-\tau)g(\tau)d\tau.
\end{equation}
The convolution quadrature approximation has attracted wide interest due to its
 broad applications in scientific computing such as integro-partial differential equations \cite{lopez1990difference,sanz1988numerical}, fractional differential equations \cite{CAO2013154,cuesta2003fractional,diethelm2002analysis,banjai2019efficient,li2010fast}, and nonlinear Volterra equations \cite{ZAKERI20106548,lubich1985fractional,miller1975volterra,MOHAMMADI2015254}.
The temporal convolution quadrature is often evaluated based on a technique of the Laplace inverse transform
and Runge-Kutta (RK) time stepping, proposed by Lubich \cite{lubich1988convolution,lubich1988convolution2}.
As introduced in the introduction part, with the Laplace transform $F(s)$ of the kernel function, the convolution integral can be written as,
\begin{equation}
y(t)=\dfrac{1}{2\pi i}\int_\Gamma F(\lambda)\int_0^t e^{\lambda(t-\tau)}g(\tau)d\tau d\lambda.
\end{equation}
Let $u(t)=\int_0^t e^{\lambda(t-\tau)}g(\tau)d\tau$, which satisfies ODE
$u'=\lambda u+g(t)$ with $u(0)=0$ and can be integrated by the RK method. The contour integral on $\Gamma$
can be calculated by employing a numerical quadrature. This method is essentially an interpolation
method which uses the linear combination
of the interpolation points of function $g(t)$ for approximating the convolution.
The attractive features of convolution quadratures include that they work well for kernels of singular,
multiple time scales, and highly oscillatory with given the  Laplace transform of the kernel function \cite{lubich2004convolution}.
Sch\"adle {\it et al.} \cite{schadle2006fast} developed an improved algorithm with $O(N \log N)$ multiplications and
$O(\log N)$ active memory for $N$ time steps. L\'opez-Fern\'andez and Sauter \cite{lopez2013generalized}
introduced a generalized convolution quadrature allowing for variable time steps.
These improved algorithms make the Lubich's method more adaptable and less storage space.
However, it was pointed out that this type of algorithms is restricted to
sectorial convolution kernels and thus not applicable to wave equations \cite{lopez2013generalized}. Additionally, it requires
the analytical Laplace transform of the kernel function, which can be difficult for some functions
such as the Mat\'{e}rn kernel often used in machine learning and statistics \cite{march2015kernel}. One has
to approximate the kernel by rational polynomials to obtain the Laplace transform, leading to a
difficulty of error estimates.

Lubich's method is equivalent to approximate the kernel function
by using the Laplace transform where the discrete points on the contour line are the exponents.
A different idea for convolution quadratures can be the use of the SOE
approximation to the kernel function and the RK step for each exponential is then performed. The advantages
of this idea are twofold:

(i) An alternative and more efficient SOE method is introduced other than the Laplace transform for better approximation of some kernels. Here the ``better'' means that the maximal exponent of the exponentials can be tuned to an $O(1)$ constant instead of $O(1/h)$ in the Laplace transform-based methods with $h$ the step size of RK. The convergence order of the error bound with respect to $h$ is thus consistent with the order of RK method whereas the stage order of RK is not involved.

(ii) One can perform model reduction techniques to reduce the number of exponentials such that the computational cost can be saved.

\subsection{Fast convolution}\label{fastconvolution}
We then consider the evaluation of the convolution $y(t)$ given in Eq.\eqref{TimeConv}.
One first assumes that $f(\tau)$ has no singularity, then an SOE expansion of the kernel
by the VPMR reads,
\begin{equation}\label{41}
f(\tau)\approx f_{\text{es}}(\tau)=\sum_{\ell=1}^P m_\ell e^{-s_\ell\tau}, ~~\tau\in[0,t]
\end{equation}
with $m_\ell,s_\ell\in\mathbb{C}$ and the real part $\Re(s_{\ell})\geq 0$, which holds
$
\|f(\tau)-f_\text{es}(\tau)\|_{\infty}<\varepsilon
$
for a prescribed accuracy $0<\varepsilon\ll 1$.

The SOE expansion leads to an approximate representation of $y(t)$ by the summation of $P$ exponential integrals
\begin{equation}\label{exponential}
y(t)\approx\int_0^t f_{\text{es}}(t-\tau)g(\tau)d\tau=\sum_{\ell=1}^P m_{\ell}Y_\ell(t),
\end{equation}
with
\begin{equation}
Y_\ell(t)=\int_0^t e^{-s_\ell(t-\tau)}g(\tau)d\tau.
\end{equation}
Each term of
the above summation can be viewed as the solution at $\tau=t$ of the following ODE,
\begin{equation}\label{ODE1}
Y_{\ell}'(\tau)=-s_\ell Y_\ell(\tau)+g(\tau)~~\text{with}~Y(0)=0.
\end{equation}
Eq.\eqref{ODE1} can be efficiently computed via the RK method with step size $h$ (i.e., $t=Nh$)
to obtain a high-accurate solution within $O(N)$ operations.

To be specific, we employ implicit $S$-stage RK method of the form
\begin{equation}\label{RK}
Y_\ell^{n+1}=Y_\ell^n+h\sum_{i=1}^{S}b_iK_i,
\end{equation}
where
\begin{equation}\label{DefiK}
K_{i}=-s_{\ell}\left(Y_\ell^{n}+h\sum_{j=1}^{S}a_{ij}K_j\right)+g(t_{n}+c_{i}h),~~i=1,2,\cdots,S,
\end{equation}
with $a_{ij}$, $b_i$ and $c_i$ being coefficients, and $Y_\ell^n$ being the discretization of $Y_\ell(nh)$. We suppose that a RK method of order $p$ and stage order
$q$ is employed (follow the definitions of Ref. \cite{Lubich1993Runge}), namely, the local truncation error is $O(h^{p+1})$ and each internal stage error is $O(h^{q+1})$.
Let us denote $\mathcal{\bm{A}}=(a_{ij})_{S\times S}$, $\bm{\beta}^T=(b_1,\cdots, b_S)$ and $\bm{\zeta}=(c_1,\cdots,c_S)$.
They are usually arranged in a mnemonic device which is known as the Butcher tableau. The stability function of the RK method Eq.\eqref{RK} is defined as
\begin{equation}
r(z)=1+z\bm{\beta}^{T}(\bm{I}-z\mathcal{\bm{A}})^{-1}\bm{E}=\dfrac{\det(\bm{I}-z\mathcal{\bm{A}}+z\bm{E}\bm{\beta}^{T})}
{\det(\bm{I}-z\mathcal{\bm{A}})}.
\end{equation}
where $\bm{E}$ is the column vector of ones and $z=-s_{\ell}h$. We choose suitable implicit RK such that $b_{j}=a_{Sj}$ for $j=1,\cdots,S$,
$c_q=1$, and all eigenvalues of $\mathcal{\bm{A}}$ have positive real parts. This implies
that the method is L-stable, i.e.,
\begin{equation}\label{L-condition}
|r(z)|\leq 1~\text{for}~\Re(z)\leq 0~\text{and}~r(\infty)=0.
\end{equation}
This stability condition is important considering that ODE \eqref{ODE1} may become stiff
when the positive exponent is large.

Substituting Eq.\eqref{RK} into Eq.\eqref{ODE1}, one can express the  solution of the ODE as,
\begin{equation} \label{yl}
Y_\ell^{n+1}=h\sum_{j=0}^{n}\bm{v}_{n-j}(z_\ell)\bm{g}_j=r(z_{\ell})Y_\ell^{n}+h \bm{\psi}_\ell \bm{g}_{n},
\end{equation}
where $\bm{\psi}_\ell=\bm{\beta}^{T}(\bm{I}-z_{\ell}\mathcal{\bm{A}})^{-1}$, $z_\ell=-s_\ell h$, and $\bm{v}_{n}(z)$ and $\bm{g}_j$ are defined by,
\begin{equation}\label{precompute}
\bm{v}_n(z)=r(z)^{n}\bm{\beta}^T(\bm{I}-z\mathcal{\bm{A}})^{-1},~~~~
\bm{g}_j=(g(t_j+c_1h),\cdots,g(t_j+c_Sh))^{T}.
\end{equation}
Then,  the convolution integral Eq.\eqref{TimeConv} at time $t$ is approximated by,
\begin{equation}\label{final}
y(t)\approx \sum_{\ell=1}^Pm_{\ell}\left[r(z_{\ell})Y_\ell^{N-1}+h \bm{\psi}_\ell \bm{g}_{N-1}\right],
\end{equation}
with $Y_\ell^{N-1}$ being recursively solved using Eq.\eqref{yl}. Since $\bm{\psi}_\ell$ and $\bm{g}_{N-1}$
are row and column vectors of dimension $q$, the complexity of each time step is $O(P)$.

When the kernel $f(\tau)$ has a singularity (or near singular) at the origin, one shall remove
the singularity by splitting the integral into two parts. For a given $t_0 \ll 1$ and $T=t-t_0$,
the convolution integral is written as,
\begin{equation}\label{split}
y(t)=\int_{0}^{t_0}f(\tau)g(t-\tau)d\tau+\int_{0}^Tf(t-\tau)g(\tau)d\tau:=I_1+I_2.
\end{equation}
Note that $I_2$ has no singularity any more, thus it can be computed
using the aforementioned way. To evaluate $I_1$ over interval $[0,t_0]$, one first approximates $g(\tau)$
with polynomial interpolation, and $f(\tau)$ can be expanded by its generalized Taylor series
\begin{equation}\label{413}
f(\tau)= a_0\tau^{-\alpha}+a_1\tau+a_2\tau^2+...
\end{equation}
where $a_0\tau^{-\alpha}$ is the leading order asymptotic of the kernel, and may become weak singular with $0\leq\alpha<1$. For weak singular or nearly singular kernels, it is important to have some a priori asymptotic analysis around the singularity point. With such techniques, the contribution is simplified to a polynomial-polynomial convolution.
Take $t_0=O(h)$ and let $G(\tau)$ be the interpolation polynomial of $g(\tau)$. One has,
\begin{equation}\label{I_1}
I_{1}\approx\int_{0}^{t_0}\left[a_0\tau^{-\alpha}+a_1\tau+a_2\tau^2+\cdots\right] G(t-\tau)d\tau,
\end{equation}
which can be evaluated explicitly.

We summarize the method in Algorithm \ref{algorithm1}. Note that except steps 5, 6, 9 and 10, all of the other steps are precomputed. Moreover, the singular part $I_1$ does not depend on $t$ and is computed only once in the calculation. Thus, the overall computation cost for the convolution integral is $O(NP)$ where $N$ is the number of time steps.

\begin{remark}
	One important reason for traditional numerical methods having lower convergence order for weakly singular kernels is that the kernels can not be expanded as standard Taylor's series about the singularities. In order to approximate a kernel near its singularity, the generalized Taylor series Eq.\eqref{413} \cite{liao2003beyond,trujillo1999riemann,liu2015local,osler1971taylor} is introduced in this paper. We do not review the literature here, and one can refer to these papers for obtaining the detailed form of Eq.\eqref{413}. In some recent works, this technique is also introduced to generate fractional order degenerate kernel methods \cite{tongke2019fractional,guo2020fractional}, Taylor-collocation methods \cite{zarei2018solving}, and approaches for computing series solution \cite{toutounian2014new} for solving equation-based problems.
\end{remark}

\begin{algorithm}[h]
	\caption{Convolution quadrature based on the SOE}
	\label{algorithm1}
    \leftline{{\bf Input:}~Time $t$, quadrature kernel $f(\tau)$ and $g(\tau)$}
	\leftline{{\bf Output:}~Convolution quadrature $y(t)$ given in Eq.\eqref{TimeConv}}
	\begin{algorithmic}[1]
		\State Choose a suitable step size $h$ and the butcher table of the RK method
		\State Precompute the explicit expression of $\bm{v}_n(z)$ given in Eq.\eqref{precompute}
		\If{$f$ does not have singularity at $0$}
		\State Construct an SOE expansion of $f$ on $[0,t]$
		\State Compute $\{\bm{g}_j\}_{j=0}^N$
		\State Evaluate $y(t)$ according to Eq.\eqref{final}
		\Else
		\State Split $y$ into the sum of local integral $I_1$ and convolution $I_2$ for a specified $t_0$
		\State Explicitly approximate $I_1$ by Eq.\eqref{I_1}
		\State Employ steps 4-6 to evaluate $I_{2}$, then sum up $I_1$ and $I_2$
		\EndIf
	\end{algorithmic}
\end{algorithm}

\subsection{Error estimate}
We present the error analysis of the fast convolution algorithm.
We assume that the kernel $f(\tau)$ has singularity at the origin.
The solution $y_h(t)$ to approximate the decomposition in Eq.\eqref{split} is given by,
\begin{equation}
y_{h}(t)=I_1^{h}+I_2^{h},
\end{equation}
where $I_1^{h}$ and $I_{2}^{h}$ are numerical approximations of $I_{1}$ and $I_{2}$.

Consider the estimate of $I_{1}^{h}$. Suppose that $g(t-\tau)\in C^{\gamma}([0,t_0])$ with $\gamma$ being an integer
and $G(t-\tau)$ is a $L$-order interpolation approximation of $g(t-\tau)$ with $L\leq\gamma$.
By standard numerical analysis, the interpolation error is,
\begin{equation}
|G(t-\tau)-g(t-\tau)|\leq C_{0}\|g^{(L)}\|_{\infty}h^{L},~\forall \tau\in[0,t_0].
\end{equation}
If we truncate the generalized Taylor series Eq.\eqref{413} at $M$-th order such that $f(\tau)\approx f_{M}(\tau)$, the error estimate of the singular part $I_{1}$ reads,
\begin{equation}\label{estimateI1}
\begin{split}
|I_{1}-I_{1}^{h}|&=\left|\int_{0}^{t_0}\left(f(\tau)-f_M(\tau)\right)g(t-\tau)d\tau+\int_{0}^{t_0}f_M(\tau)\left(g(t-\tau)-G(t-\tau)\right)d\tau\right|\\
&\leq \int_{0}^{t_0}\left|f(\tau)-f_{M}(\tau)\right||g(t-\tau)|d\tau+\int_{0}^{t_0}\left|f_M(\tau)\right|\left|g(t-\tau)-G(t-\tau)\right|d\tau\\
&\leq C_{1}t_0^{M+1}\int_{0}^{t_0}|g(t-\tau)|d\tau+C_{0}\|g^{(L)}\|_{\infty}h^L\int_{0}^{t_0}\left|f_M(\tau)\right|d\tau\\
&\leq C_{1}(n_0h)^{M+1}\|g\|_{L^{1}}+C_{0}C_{f_M,t_0}\|g^{(L)}\|_{\infty}h^L
\end{split}
\end{equation}
where $C_{f_M,t_0}=\int_{0}^{t_0}|f_M(\tau)|d\tau$ is bounded because the convolution Eq.\eqref{TimeConv} is well defined, and $C_{0}$ and $C_{1}$ are constants.

For the exponential convolution part $I_2^{h}$, the kernel function $f(t-\tau)$ is approximated by its SOE expansion $f_{\text{es}}(t-\tau)$ on $[0,T]$
with the error tolerance $\varepsilon$, then the error reads,
\begin{equation}\label{estimateI2}
\begin{split}
\left|I_{2}-I_{2}^{h}\right|&=\left|\int_{0}^{T}(f(t-\tau)-f_{\text{es}}(t-\tau))g(\tau)d\tau+\sum_{\ell=1}^{P}m_{\ell}E^{\ell}_{\text{RK}}(t)\right|\\
&\leq \varepsilon\|g\|_{L^1}+Pm_{\max}\left|E^{\max}_{\text{RK}}(t)\right|
\end{split}
\end{equation}
where $E^{\ell}_{\text{RK}}$ is the error introduced by employing the RK method to solve the $\ell$-th ODE Eq.\eqref{ODE1}, $m_{\max}=\max\{|m_\ell|\}_{\ell=1}^{P}$ and $E_{\text{RK}}^{\max}(t)=\max\left\{ \left|E_{\text{RK}}^{\ell}(t)\right| \right\}_{\ell=1}^{P}$. To estimate $E_{\text{RK}}^{\max}(t)$, we firstly consider the following four lemmas. The proof of these lemmas can be found in \cite{banjai2011error,Lubich1993Runge}.
\begin{lemma}\label{lemma1}
When the RK method is applied to $y'=\lambda y+g$, the stability function $r(z)$ satisfies
\begin{equation}
    r(z)-e^z=O(z^{p+1})
\end{equation}
for $z\rightarrow 0$.
\end{lemma}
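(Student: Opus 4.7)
The plan is to reduce the claim to the defining consistency property of a $p$-th order Runge--Kutta scheme applied to the scalar linear test problem. I would first apply the RK method (\ref{RK})--(\ref{DefiK}) to the Dahlquist test equation $y' = \lambda y$ with initial value $y(0) = 1$, setting $g \equiv 0$. Solving the internal-stage linear system $K_i = \lambda(1 + h\sum_j a_{ij} K_j)$ in vector form gives $\bm{K} = \lambda(\bm I - z\mathcal{\bm A})^{-1}\bm E$ with $z = \lambda h$, so that the numerical solution after one step is exactly $y_1 = 1 + h\bm{\beta}^T \bm{K} = 1 + z\bm{\beta}^T(\bm I - z\mathcal{\bm A})^{-1}\bm E = r(z)$.

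Next I would compare this with the exact flow $y(h) = e^{\lambda h} = e^z$. Because the RK method has order $p$, its local truncation error applied to any smooth right-hand side is $O(h^{p+1})$; specializing to the analytic problem $y' = \lambda y$ with fixed $\lambda$ gives $|r(z) - e^z| = |y_1 - y(h)| = O(h^{p+1})$. Since $z = \lambda h$ scales linearly with $h$, this is precisely the desired estimate $r(z) - e^z = O(z^{p+1})$ as $z \to 0$. Both functions are analytic at $0$ (the rational function $r$ has no pole there since $\det(\bm I - z\mathcal{\bm A}) \to 1$), so the big-$O$ bound on their difference can be read off their Taylor coefficients.

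If one prefers an algebraic route, the same conclusion follows by expanding $(\bm I - z\mathcal{\bm A})^{-1} = \sum_{k\ge 0} z^k \mathcal{\bm A}^k$ to obtain the power series $r(z) = 1 + \sum_{k\ge 0} z^{k+1}\bm{\beta}^T \mathcal{\bm A}^k \bm E$, and invoking the classical Runge--Kutta order conditions $\bm{\beta}^T \mathcal{\bm A}^k \bm E = 1/(k+1)!$ for $k = 0, 1, \dots, p-1$, which are implied by order $p$; matching coefficients with $e^z = \sum_{k\ge 0} z^k/k!$ yields agreement through order $z^p$. There is no real obstacle here --- the lemma is essentially a restatement of the order condition on the linear test equation; the only minor care needed is to note that the implicit stage system is uniquely solvable in a neighbourhood of $z=0$ so that $r(z)$ is analytic there, which is immediate from $\det(\bm I - z\mathcal{\bm A})|_{z=0} = 1$.
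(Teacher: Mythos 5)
Your proof is correct. Note that the paper does not actually prove this lemma in-text: it defers to Lubich--Ostermann and Banjai--Lubich, where the argument is exactly the one you give, namely that one step of the RK scheme on the Dahlquist test problem produces $y_1=r(z)$, so the order-$p$ local error bound (equivalently, the tall-tree order conditions $\bm{\beta}^{T}\mathcal{\bm{A}}^{k}\bm{E}=1/(k+1)!$ for $k=0,\dots,p-1$, using the row-sum convention $\bm{\zeta}=\mathcal{\bm{A}}\bm{E}$) forces $r(z)$ and $e^{z}$ to agree through order $z^{p}$; your remark that $r$ is analytic near $z=0$ because $\det(\bm{I}-z\mathcal{\bm{A}})\to 1$ correctly closes the only small gap in passing from an $O(h^{p+1})$ local-error statement to a Taylor-coefficient statement.
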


\begin{lemma}\label{lemma2}
Consider that the RK method is applied to $y'=\lambda y+g$ with $h$ the step size and $r(z)$ the stability function where $z=\lambda h$. We define
\begin{equation}
    f_n^{(k)}(z)=\sum_{\nu=0}^nr(z)^{-\nu}\nu^k,
\end{equation}
then the following upper bound
\begin{equation}
    |zr(z)^nf_n^{(k)}(z)|\leq C(n+|z|^{-1})^ke^{\Re(z) n}
\end{equation}
holds.
\end{lemma}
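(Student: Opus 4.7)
My strategy begins by substituting $\mu = n - \nu$ in the definition of $f_n^{(k)}(z)$, rewriting
\[
r(z)^n f_n^{(k)}(z) = \sum_{\mu=0}^{n} r(z)^{\mu}(n-\mu)^k.
\]
This reindexing is pivotal: it turns the potentially unbounded factors $r(z)^{-\nu}$ into factors $r(z)^\mu$ whose moduli are bounded by $1$ on the stability region $\Re(z)\le 0$, thanks to L-stability. The task thus reduces to estimating $|z|\sum_{\mu=0}^n r(z)^\mu(n-\mu)^k$ in terms of $(n+|z|^{-1})^k$ and an exponential-decay factor.

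The plan rests on two auxiliary ingredients followed by a closed-form computation. First, combining Lemma~\ref{lemma1} (giving $r(z)=e^z+O(z^{p+1})$ near the origin), the L-stability condition $|r(z)|\le 1$ on $\Re(z)\le 0$, and $r(\infty)=0$, one obtains the sharp exponential decay $|r(z)|^n \le C\,e^{\Re(z)n}$ on the left half-plane via a maximum-principle argument applied to the subharmonic function $\log|r(z)|-\Re(z)$. Second, the Taylor expansion $1-r(z) = -z + O(z^{p+1})$ near the origin together with $r(\infty)=0$ at infinity yields the lower bound $|1-r(z)|\gtrsim \min(1,|z|)$, so that $|z|\cdot|1-r(z)|^{-1}$ stays bounded. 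Then, starting from the geometric sum identity $\sum_{\mu=0}^n w^\mu = (1-w^{n+1})/(1-w)$ and applying the Euler operator $(w\,\partial_w)^k$ produces a closed-form expression for $\sum_{\mu=0}^n w^\mu\mu^k$, from which an elementary binomial manipulation extracts $\sum_{\mu=0}^n w^\mu (n-\mu)^k$. An induction on $k$ then shows this sum is bounded in modulus by $C\bigl(n^k+|1-w|^{-k}\bigr)\max_{0\le\mu\le n}|w|^\mu$. Substituting $w=r(z)$, multiplying by $|z|$, and invoking both the exponential decay and the lower bound on $|1-r(z)|$ delivers the claimed estimate.

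The main obstacle will be the uniform decay $|r(z)|^n\le C\,e^{\Re(z)n}$. L-stability alone only gives $|r(z)|\le 1$ on the left half-plane, and upgrading this to match the exponent of $e^z$ requires a careful complex-analytic argument that simultaneously exploits the $(p+1)$-th order contact of $r$ with $e^z$ at the origin and the vanishing $r(\infty)=0$ at infinity, typically via a maximum principle or a Phragmen--Lindelof-style estimate on a suitably chosen sector. Once this uniform decay is secured, the remaining steps reduce to routine algebraic manipulations of closed-form geometric sums and their derivatives, together with splitting the summation range at $\mu\approx n/2$ to match the polynomial factor $(n-\mu)^k$ against the geometric factor $|r(z)|^\mu$.
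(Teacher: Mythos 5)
You should first note that the paper does not actually prove this lemma: it is quoted from the cited references (Lubich--Ostermann and Banjai--Lubich) with the remark ``the proof of these lemmas can be found in [those works],'' so there is no in-paper argument to match. Your reindexing $\mu=n-\nu$, the lower bound $|1-r(z)|\gtrsim\min(1,|z|)$, and the geometric-sum/induction machinery are all sound, and by themselves they deliver the polynomial part of the estimate, $|zr(z)^nf_n^{(k)}(z)|\le C(n+|z|^{-1})^k$, using only $|r(z)|\le 1$ on $\Re(z)\le 0$. That, incidentally, is all the paper ever uses: in the proof of Theorem \ref{theorem1} the factor $e^{\Re(-s_\ell h)n}$ is immediately bounded by $1$.

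The genuine gap is your first ``auxiliary ingredient,'' the uniform decay $|r(z)|^n\le C\,e^{\Re(z)n}$ on the left half-plane. Letting $n\to\infty$ this forces $|r(z)|\le e^{\Re(z)}$, which is false for rational L-stable stability functions: along the negative real axis $e^{\Re(z)}=e^{-|z|}$ decays exponentially while a rational $r$ decays only algebraically (for backward Euler, $r(-10)=1/11\gg e^{-10}$), and it already fails for small arguments ($r(-0.1)=1/1.1\approx 0.9091>e^{-0.1}\approx 0.9048$), with backward Euler satisfying every structural hypothesis imposed in Section \ref{fastconv}. Your proposed maximum-principle/Phragm\'en--Lindel\"of argument on $\log|r(z)|-\Re(z)$ cannot rescue this: that function is $\le 0$ on the imaginary axis but tends to $+\infty$ as $z\to-\infty$ along the real axis (since $-\Re(z)=|z|$ grows linearly while $\log|r(z)|$ decreases only logarithmically), so no interior bound of the form $\le 0$ follows. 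The failure propagates to the full claim: for $k=0$ and $z=-0.1$ the left-hand side tends to $|z|/|1-r(z)|\approx 1.1$ as $n\to\infty$ while $Ce^{\Re(z)n}\to 0$. In the cited literature the corresponding estimate either omits the factor $e^{\Re(z)n}$ or carries a method-dependent constant in the exponent and is restricted to $|z|\le\bar c$; as written, your route to the stated inequality cannot be completed, and you should either drop the exponential factor (which suffices for Theorem \ref{theorem1}) or state and prove the weaker, correctly qualified version from the references.
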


\begin{lemma}\label{lemma3}
For the RK method with order $p$, the following estimate
\begin{equation}
    \bm{\beta}^{T}(\bm{I}-z\mathcal{\bm{A}})^{-1}(k\mathcal{\bm{A}}\bm{\xi}^{k-1}-\bm{\xi}^k)=O(z^{p-k})~\text{as}~z\rightarrow 0
\end{equation}
holds for $k=1,2,\cdots,p$ where $\bm{\xi}^k=(c_1^k,c_2^k,\cdots,c_q^k)$ and the definitions of $\bm{\beta}$, $\bm{I}$, and $\mathcal{\bm{A}}$ follow Section \ref{fastconvolution}.
\end{lemma}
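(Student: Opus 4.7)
The plan is to expand $(\bm I - z\mathcal{\bm A})^{-1}$ as a Neumann series in $z$ and show that each resulting bracketed coefficient is a difference of two classical Runge--Kutta elementary weights that vanishes identically by the order-$p$ conditions.

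Concretely, using $(\bm I - z\mathcal{\bm A})^{-1} = \sum_{j\ge 0} z^j\mathcal{\bm A}^j$, one has
\begin{equation*}
E(z) := \bm\beta^T(\bm I - z\mathcal{\bm A})^{-1}\bigl(k\mathcal{\bm A}\bm\xi^{k-1} - \bm\xi^k\bigr) = \sum_{j\ge 0} z^j\bigl[k\,\bm\beta^T\mathcal{\bm A}^{j+1}\bm\xi^{k-1} - \bm\beta^T\mathcal{\bm A}^j\bm\xi^k\bigr].
\end{equation*}
Each term $\bm\beta^T\mathcal{\bm A}^m\bm\xi^\ell$ is the elementary RK weight of the ``tall bushy'' rooted tree $\tau_{m,\ell}$ consisting of the root carrying a chain of $m$ nodes, the top of which holds a bush of $\ell$ leaves. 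A direct count of the subtree sizes gives density $\gamma(\tau_{m,\ell}) = (m{+}\ell{+}1)!/\ell!$, and this tree has order $m{+}\ell{+}1$. Hence the classical tree order condition yields $\bm\beta^T\mathcal{\bm A}^m\bm\xi^\ell = \ell!/(m{+}\ell{+}1)!$ whenever $m{+}\ell{+}1 \le p$.

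Applying this identification to the two trees $\tau_{j+1,k-1}$ and $\tau_{j,k}$, both of order $j{+}k{+}1$, the bracket collapses to
\begin{equation*}
k \cdot \frac{(k-1)!}{(j+k+1)!} - \frac{k!}{(j+k+1)!} = 0 \quad\text{whenever } j+k+1 \le p.
\end{equation*}
Consequently every coefficient of $z^j$ for $j \le p-k-1$ in the Neumann expansion of $E(z)$ vanishes, and the earliest surviving contribution is at order $z^{p-k}$, which is the desired estimate.

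The main obstacle is identifying the correct densities of the two tall bushy trees so that the combinatorial factor $k$ delivers the exact cancellation $k\cdot(k-1)! = k!$. A tree-free alternative is the following algebraic route: the identity $\mathcal{\bm A}(\bm I - z\mathcal{\bm A})^{-1} = z^{-1}[(\bm I - z\mathcal{\bm A})^{-1} - \bm I]$ combined with $\bm\beta^T\bm\xi^{k-1} = 1/k$ (the order condition $B(p)$, valid for $k\le p$) rewrites $zE(z) = k\hat I_k(z) - z\hat I_{k+1}(z) - 1$, where $\hat I_m(z) := \bm\beta^T(\bm I - z\mathcal{\bm A})^{-1}\bm\xi^{m-1}$; integration by parts supplies the exact analogue $k\hat J_k(z) - z\hat J_{k+1}(z) = 1$ for the normalized integrals $\hat J_m(z) = h^{-m}\int_0^h e^{\lambda(h-\tau)}\tau^{m-1}\,d\tau$, so the claim reduces to the standard local truncation estimate $\hat J_m - \hat I_m = O(z^{p-m+1})$ for the polynomial inhomogeneity $\tau^{m-1}$, which is itself a restatement of the same tall-bushy-tree order conditions.
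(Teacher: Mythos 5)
Your argument is correct and complete. Note that the paper itself contains no proof of Lemma~\ref{lemma3}: it only defers to Lubich--Ostermann and Banjai--Lubich, where the estimate is obtained by recognizing $\bm{\beta}^{T}(\bm{I}-z\mathcal{\bm{A}})^{-1}\left(\mathcal{\bm{A}}\bm{\xi}^{k-1}-\bm{\xi}^{k}/k\right)$ as (a generating function of) the defect of the RK method applied to $y'=\lambda y+t^{k-1}/(k-1)!$ and then invoking the order-$p$ local error bound --- essentially the ``tree-free alternative'' you sketch in your closing paragraph. Your main proof takes a genuinely different, more explicitly combinatorial route: Neumann expansion of the resolvent followed by termwise cancellation via the rooted-tree order conditions $\bm{\beta}^{T}\mathcal{\bm{A}}^{m}\bm{\xi}^{\ell}=\ell!/(m+\ell+1)!$ for the tall bushy trees of order $m+\ell+1\leq p$. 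The density count $\gamma(\tau_{m,\ell})=(m+\ell+1)!/\ell!$ is right, both trees $\tau_{j+1,k-1}$ and $\tau_{j,k}$ have order $j+k+1$, the cancellation $k\cdot(k-1)!-k!=0$ kills every coefficient of $z^{j}$ with $j\leq p-k-1$, and the first surviving term is indeed at $z^{p-k}$; the silent commutation $(\bm{I}-z\mathcal{\bm{A}})^{-1}\mathcal{\bm{A}}=\mathcal{\bm{A}}(\bm{I}-z\mathcal{\bm{A}})^{-1}$ is of course valid. What your route buys is a short, self-contained proof directly from the classical order conditions (which any method of classical order $p$ satisfies, so invoking the full tree conditions is harmless); what the cited route buys is that it needs only the weaker conditions governing linear inhomogeneous problems. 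One cosmetic remark: the statement's $\bm{\xi}^{k}=(c_1^k,c_2^k,\cdots,c_q^k)$ should have $S$ components rather than $q$, as your computation implicitly assumes.
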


\begin{lemma}\label{lemma4}
The error at time $t_n=nh$ of the RK method with $h$ the step size applied to $y'=\lambda y+t^l/l!$, $y(0)=0$, $t\in[0,T]$, is given by
\begin{equation}\label{equation2}
    \begin{split}
    e_n&= \lambda^{-l-1}(r(h\lambda)^n-e^{nh\lambda})-\sum_{k=q+1}^ph^k\sum_{\nu=1}^{n-1}r_{n-1-\nu}^{(k)}(h\lambda)\lambda^{k-l-1}\sum_{\kappa=0}^{l-k}\dfrac{(\lambda \kappa h)^{\kappa}}{\kappa!}\\
    &=\lambda^{-l-1}(r(h\lambda)^n-e^{nh\lambda})-\tilde{e}_n
    \end{split}
\end{equation}
with $r(z)$ the stability function,
\begin{equation}
    r_n^{(k)}(z)=r(z)^nz\bm{\beta}^T(\bm{I}-z\mathcal{\bm{A}})^{-1}\delta^{(k)},
\end{equation}
and
\begin{equation}
    \delta^{(k)}=\mathcal{\bm{A}}\bm{\xi}^{k-1}-\bm{\xi}^k/k
\end{equation}
where $p$ and $q$ are the order and the stage order of the RK method, respectively. The definitions of $\bm{\beta}$, $\bm{I}$, $\mathcal{\bm{A}}$, and $\bm{\xi}^{k}$ follow Lemma \ref{lemma3}.
\end{lemma}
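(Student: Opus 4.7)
The plan is to compute both the exact solution and the RK approximation for the linear test ODE $y'=\lambda y+t^{l}/l!$ with $y(0)=0$ explicitly, and then match them term by term. I would first note that the exact solution admits the closed form
\begin{equation*}
y(t_n)=\lambda^{-l-1}\Bigl(e^{n h\lambda}-\sum_{\kappa=0}^{l}\frac{(\lambda t_n)^{\kappa}}{\kappa!}\Bigr),
\end{equation*}
obtained by the variation-of-constants formula and repeated integration by parts. For the RK side, substituting the polynomial forcing $g(t)=t^{l}/l!$ into the internal-stage equations \eqref{DefiK} and solving yields, after one step, the recurrence
\begin{equation*}
Y^{n+1}=r(z)Y^{n}+h\,\bm{\beta}^{T}(\bm{I}-z\mathcal{\bm{A}})^{-1}\bm{g}_{n},\qquad z=h\lambda,
\end{equation*}
with $\bm{g}_{n}=((t_{n}+c_{i}h)^{l}/l!)_{i}$. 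Unrolling this recurrence from $Y^{0}=0$ gives $Y^{n}$ as a discrete convolution of the powers $r(z)^{n-1-\nu}$ against $\bm{g}_{\nu}$.

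The main step is to expand each $\bm{g}_{\nu}$ by the binomial theorem as $\sum_{k=0}^{l}(\nu h)^{l-k}(h\bm{\xi}^{k})/((l-k)!\,k!)$, insert into the convolution, and regroup by powers of $k$. For $k\le q$, the stage-order hypothesis gives $\bm{\beta}^{T}(\bm{I}-z\mathcal{\bm{A}})^{-1}\bm{\xi}^{k}$ equal up to sufficient order to the quantity produced by the exact quadrature of $s^{k}e^{\lambda(h-s)}$ over $[0,h]$, so these terms combine with the polynomial $\sum_{\kappa}(\lambda t_{n})^{\kappa}/\kappa!$ from the exact solution and leave behind exactly $\lambda^{-l-1}(r(z)^{n}-e^{nh\lambda})$ as the surviving main contribution. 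For $q<k\le p$, the quantity $\bm{\beta}^{T}(\bm{I}-z\mathcal{\bm{A}})^{-1}(k\mathcal{\bm{A}}\bm{\xi}^{k-1}-\bm{\xi}^{k})$ no longer vanishes to the needed order; this is exactly where Lemma \ref{lemma3} identifies the deficit as $O(z^{p-k})$, and it is natural to introduce the vectors $\delta^{(k)}=\mathcal{\bm{A}}\bm{\xi}^{k-1}-\bm{\xi}^{k}/k$ so that the leftover convolution takes the form $\sum_{\nu}r_{n-1-\nu}^{(k)}(z)\cdot(\text{polynomial in }\nu h\lambda)$. Collecting these leftovers for $k=q+1,\dots,p$ and simplifying the inner sum of powers $(\lambda\kappa h)^{\kappa}/\kappa!$ produces precisely $\tilde{e}_{n}$ in the statement.

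Finally I would subtract the exact solution from the RK solution and verify that the two contributions assemble into the claimed formula \eqref{equation2}. The bookkeeping obstacle is the reindexing between the convolution variable $\nu$, the binomial index $k$, and the internal polynomial degree $\kappa$: one must be careful that the exponent ranges $\kappa=0,\dots,l-k$ arise naturally from the exact integration of $s^{l}$ over $[0,h]$ rather than being postulated, and that all $k\le q$ contributions cancel cleanly against the exact polynomial tail. The remaining step, bounding $\tilde{e}_{n}$, is not needed here since the lemma only asserts the identity; its magnitude will be controlled later through Lemma \ref{lemma2} once one knows the explicit structure in terms of $r_{n}^{(k)}(z)$.
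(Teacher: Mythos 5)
First, note that the paper does not prove Lemma \ref{lemma4} at all: it states that the proof ``can be found in \cite{banjai2011error,Lubich1993Runge}'', so the benchmark is the defect-based derivation of Lubich--Ostermann. Your overall strategy --- unroll the RK recurrence into a discrete convolution, expand the polynomial forcing over the stage abscissae, and let the stage-order conditions isolate the vectors $\delta^{(k)}=\mathcal{\bm{A}}\bm{\xi}^{k-1}-\bm{\xi}^k/k$ --- is in the right spirit and uses the right ingredients. But there is a genuine gap at the central step. The lemma asserts an \emph{exact identity}, whereas your argument for the $k\le q$ contributions (``equal up to sufficient order to the quantity produced by the exact quadrature'') and for the $q<k\le p$ leftovers (``no longer vanishes to the needed order'') is asymptotic language: it would at best show that $e_n$ agrees with the claimed expression modulo higher-order terms, not that the two sides coincide. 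As written, the claim that the $k\le q$ terms ``cancel cleanly against the exact polynomial tail'' and leave behind \emph{exactly} $\lambda^{-l-1}(r(z)^n-e^{nh\lambda})$ is precisely the content of the lemma and is asserted rather than proved; making your binomial-regrouping route exact requires the reduction identity $\bm{\beta}^T(\bm{I}-z\mathcal{\bm{A}})^{-1}\mathcal{\bm{A}}=z^{-1}\bigl(\bm{\beta}^T(\bm{I}-z\mathcal{\bm{A}})^{-1}-\bm{\beta}^T\bigr)$ and a telescoping bookkeeping that you do not carry out.

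The clean route to exactness, which is what the cited proof does, is to decompose the exact solution as $y(t)=\lambda^{-l-1}e^{\lambda t}+\tilde y(t)$, where $\tilde y(t)=-\lambda^{-l-1}\sum_{\kappa=0}^{l}(\lambda t)^{\kappa}/\kappa!$ is a degree-$l$ polynomial solving the \emph{same} ODE with initial value $-\lambda^{-l-1}$. By linearity of the RK map, the exponential part contributes the error $\lambda^{-l-1}\bigl(r(h\lambda)^n-e^{nh\lambda}\bigr)$ exactly, while for the polynomial part the internal-stage defects have a \emph{terminating} Taylor expansion, $\Delta_{ni}=-\sum_{k=q+1}^{l}\frac{h^k}{(k-1)!}\,\delta^{(k)}_i\,\tilde y^{(k)}(t_n)$ with $\tilde y^{(k)}(t_n)=-\lambda^{k-l-1}\sum_{\kappa=0}^{l-k}(\lambda t_n)^{\kappa}/\kappa!$, and solving the error recursion yields $-\tilde e_n$ with no remainder. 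Two further points you should address: (i) the stiffly-accurate/order conditions are needed so that the update defect contributes nothing for $k\le p$, which is why the sum can be written as $\sum_{k=q+1}^{p}$ (implicitly $l\le p$ in the application); and (ii) carrying out either derivation honestly produces the factor $(\lambda t_\nu)^{\kappa}=(\lambda\nu h)^{\kappa}$ in the inner sum --- the $(\lambda\kappa h)^{\kappa}$ appearing in \eqref{equation2} is a typo in the statement (consistent with the later use of $f_n^{(\kappa)}$ from Lemma \ref{lemma2}), and your sketch reproduces it uncritically rather than exposing it.
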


We have the following theorem to estimate $E_{\text{RK}}^{\max}(t)$.
\begin{theorem}\label{theorem1}
Suppose a $S$-stage implicit L-stable RK method. Let $p$ be the approximate order of the RK, and $q\leq p-1$ be the stage order satisfies condition Eq.\eqref{L-condition}. Let $h$ be the time step.
If $g(\tau)\in C^{(\gamma)}([0,t-t_0])$, $\gamma\geq p$ and $\max\limits_{\ell}|s_{\ell} h|\leq 1$, then the error $E_{\text{RK}}^{\max}(t)$ is bounded by
\begin{equation}\label{estimateI3}
\left|E_{\text{RK}}^{\max}(t)\right| \leq Ch^{p}\left( \sum_{\ell=0}^{p-1} \|g^{(\ell)}(0)\|_{\infty}+\max_{0\leq\tau\leq t-t_0}\|g^{(p)}(\tau)\|_{\infty} \right),
\end{equation}	
where $C$ is a constant.
\end{theorem}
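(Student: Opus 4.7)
The plan is to follow Lubich's analysis of Runge--Kutta schemes applied to the linear ODE \eqref{ODE1} and recover the full order $p$ (rather than $\min(p,q+1)$) by absorbing the initial-layer obstruction into Taylor coefficients of $g$ at the origin. First I would Taylor-expand $g$ about $\tau=0$,
\begin{equation*}
g(\tau)=\sum_{l=0}^{p-1}\frac{g^{(l)}(0)}{l!}\tau^{l}+R_{p}(\tau),\qquad R_{p}(\tau)=\frac{1}{(p-1)!}\int_{0}^{\tau}(\tau-\sigma)^{p-1}g^{(p)}(\sigma)\,d\sigma,
\end{equation*}
and use the linearity of \eqref{ODE1} in the forcing to split, with $\lambda=-s_{\ell}$,
\begin{equation*}
E_{\text{RK}}^{\ell}(t_{n})=\sum_{l=0}^{p-1}g^{(l)}(0)\,e_{n}^{(l)}(\lambda)+e_{n}^{(R)}(\lambda),
\end{equation*}
where $e_{n}^{(l)}$ is the RK error for the canonical problem $y'=\lambda y+\tau^{l}/l!$ with $y(0)=0$, and $e_{n}^{(R)}$ is the error for the forcing $R_{p}$.

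For each canonical monomial, Lemma~\ref{lemma4} gives the explicit identity $e_{n}^{(l)}(\lambda)=\lambda^{-l-1}(r(h\lambda)^{n}-e^{nh\lambda})-\tilde{e}_{n}$, and it is important not to bound the two pieces separately since each can be $O(1)$ in the stiff regime. I would rewrite $r^{n}-e^{nh\lambda}$ via the telescoping identity $\sum_{j=0}^{n-1}r^{j}(r-e^{h\lambda})e^{(n-1-j)h\lambda}$, use Lemma~\ref{lemma1} to bound $|r(h\lambda)-e^{h\lambda}|\le C|h\lambda|^{p+1}$ in the regime $|h\lambda|\le1$, and invoke L-stability to dominate $|r|,|e^{h\lambda}|\le 1$. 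For the correction $\tilde{e}_{n}$, Lemma~\ref{lemma3} supplies an extra factor $|h\lambda|^{p-k}$ from $\bm{\beta}^{T}(\bm{I}-z\mathcal{\bm{A}})^{-1}\delta^{(k)}$, while Lemma~\ref{lemma2} controls the discrete resolvent sum $r^{n-1-\nu}f_{\nu}^{(\kappa)}$ by $(n+|z|^{-1})^{k}e^{\Re(z)n}$. Combining the two pieces term-by-term and summing over $k=q+1,\dots,p$ and $\nu=0,\dots,n-1$ would produce $|e_{n}^{(l)}(\lambda)|\le Ch^{p}$, uniformly in $\lambda$ with $|h\lambda|\le1$ and with a constant depending only on the tableau and $T$. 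Paired with $|g^{(l)}(0)|$ for $l=0,\dots,p-1$, this yields the first contribution in \eqref{estimateI3}.

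For the remainder $R_{p}$, the decisive feature is that $R_{p}^{(k)}(0)=0$ for $k=0,\dots,p-1$ and $\|R_{p}^{(p)}\|_{\infty}\le \max_{[0,t-t_{0}]}\|g^{(p)}\|_{\infty}$. Writing the RK solution via the recurrence \eqref{yl} as the discrete convolution
\begin{equation*}
e_{n}^{(R)}(\lambda)=h\sum_{j=0}^{n-1}r(h\lambda)^{n-1-j}\bm{\beta}^{T}(\bm{I}-h\lambda\mathcal{\bm{A}})^{-1}\bm{R}_{p,j}-\int_{0}^{nh}e^{(nh-\sigma)\lambda}R_{p}(\sigma)\,d\sigma,
\end{equation*}
where $\bm{R}_{p,j}$ stacks $R_{p}$ at the internal stages of step $j$, I would apply repeated summation by parts against the vanishing of $R_{p}$ at the origin, combined with the L-stability bound $|r(h\lambda)|\le 1$ and Lemma~\ref{lemma2}, to obtain $|e_{n}^{(R)}(\lambda)|\le Ch^{p}\max_{[0,t-t_{0}]}\|g^{(p)}\|_{\infty}$. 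Taking the maximum over $\ell$ and assembling the monomial and remainder estimates then completes the proof of \eqref{estimateI3}.

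The hard part is the joint management of the $|\lambda|$-factors coming from the canonical monomial error formula of Lemma~\ref{lemma4}. Bounding the two pieces of $e_{n}^{(l)}(\lambda)$ separately would leave powers of $|\lambda|$ that dominate, reducing the effective rate to $O(h^{q+1})$, which is precisely the classical order-reduction of stiffly-applied RK methods. The whole content of the theorem is that, taken together, the cancellations between the leading term and $\tilde{e}_{n}$ render the error genuinely $O(h^{p})$; this is where the uniform bound $\max_{\ell}|s_{\ell}|h\le 1$ enters decisively, allowing powers of $|h\lambda|$ to be freely absorbed. A secondary difficulty is verifying that the discrete-convolution representation of $e_{n}^{(R)}$ does not accumulate an extra factor of $n$ over the interval $[0,t-t_{0}]$; this is tracked sharply by Lemma~\ref{lemma2}, which quantifies the discrete-time resolvent norms of the L-stable amplification factor $r$.
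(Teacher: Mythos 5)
Your proposal follows essentially the same route as the paper's proof: Taylor-expand $g$ at the origin, apply Lemma~\ref{lemma4} to each canonical monomial forcing $t^l/l!$, bound the $r(h\lambda)^n-e^{nh\lambda}$ piece by telescoping together with Lemma~\ref{lemma1} and L-stability, control $\tilde e_n$ via Lemmas~\ref{lemma2} and~\ref{lemma3}, and dispose of the Taylor remainder with the Peano-kernel argument of Lubich--Ostermann. The only small remark is that in the regime $|s_\ell h|\le 1$ with $|s_\ell|=O(1)$ the paper bounds the two pieces of the monomial error \emph{separately} (each is already $O(h^p)$ there, since the powers of $|s_\ell|$ left over are harmless), so the cancellation you describe as essential is not actually invoked — the uniform $O(1)$ bound on the exponents provided by the VPMR construction is what does the work.
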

\begin{proof}
For this proof, $C$ will denote a generic constant that is allowed to depend on $T$. We first consider the numerical error of which the RK method is applied to $y'=-s_{\ell} y+t^l/l!$, $y(0)=0$, where we recall $-s_{\ell}$ the exponent of the $\ell$-th exponential of the SOE constructed by the VPMR method. From Lemma \ref{lemma1}, for $nh\leq T$, we have
\begin{equation}\label{equation10}
	\left|r(-hs_{\ell})^n-e^{-nhs_{\ell}}\right|=\left|\left(r(-hs_{\ell})-e^{-hs_{\ell}}\right)\sum_{\nu=0}^{n-1}r(-hs_{\ell})^{n-1-\nu}e^{-\nu hs_{\ell}}\right|\leq Ch^{p},
\end{equation}
where the boundness of the stability function $|r(-hs_{\ell})^{n-1-\nu}|$ and $|e^{-\nu hs_{\ell}}|$ are employed. By Lemma \ref{lemma4}, with $\lambda=-s_{\ell}$, the error $e_n$ can be split into two parts as in Eq.\eqref{equation2}, where the first part has an estimate by applying Eq.\eqref{equation10} directly:
\begin{equation}\label{orderh}
    |(-s_{\ell})^{-l-1}\left(r(-hs_{\ell})^n-e^{-nhs_{\ell}}\right)|\leq|s_{\ell}|^{p-l-1}h^{p}=O(h^{p}).
\end{equation}
Eq.\eqref{orderh} takes the advantage of the VPMR method that $\max\limits_{\ell}{|s_{\ell}|}$ is restricted as $O(1)$, thus $|s_{\ell}|^{p-l-1}$ is $O(1)$.

Next, recalling the definition of $f_n^{(k)}(z)$ given in Lemma \ref{lemma2} and the expression of the second part $\tilde{e}_n$, we have
\begin{equation}\label{expressionen}
\begin{split}
    \tilde{e}_n&=\sum_{k=q+1}^ph^k\sum_{\nu=1}^{n-1}r_{n-1-\nu}^{(k)}(-s_{\ell}h)(-s_{\ell})^{k-l-1}\sum_{\kappa=0}^{l-k}\dfrac{(\lambda \kappa h)^{\kappa}}{\kappa!}\\
    &=h^{l+1}\sum_{k=q+1}^p\sum_{\nu=1}^{n-1}r(-s_{\ell}h)^{n-1-\nu}\bm{\beta}^T(\bm{I}+s_{\ell}h\mathcal{\bm{A}})^{-1}\delta^{(k)}(-s_{\ell}h)^{k-l}\sum_{\kappa=0}^{l-k}\dfrac{(-s_{\ell} \kappa h)^{\kappa}}{\kappa!}\\
    &=h^{l+1}\sum_{k=q+1}^p\bm{\beta}^T(\bm{I}+s_{\ell}h\mathcal{\bm{A}})^{-1}\delta^{(k)}(-s_{\ell}h)^{k-l-1}\sum_{\kappa=0}^{l-k}\dfrac{(-s_{\ell}h)^\kappa}{\kappa!}(-s_{\ell}h)r(-s_{\ell}h)^{n-1}f_n^{(\kappa)}(-s_{\ell}h).
\end{split}
\end{equation}
By substituting Lemmas \ref{lemma2} and \ref{lemma3} into Eq.\eqref{expressionen}, we obtain
\begin{equation}\label{inequality2}
	\begin{split}
	|\tilde{e}_n|&\leq Ch^{l+1}\sum_{k=q+1}^{p}|s_{\ell}h|^{p-k}\dfrac{|s_{\ell}h|^{k-l-1}}{k}\sum_{\kappa=0}^{l-k}\dfrac{|s_{\ell}h|^{\kappa}}{\kappa !}(n+|s_{\ell}h|^{-1})^{\kappa}e^{\Re(-s_{\ell}h)n}\\
	&\leq Ch^{p}|s_{\ell}|^{p-l-1}(1+n|s_{\ell}h|)^{l-q-1}\\
	&\leq Ch^{p}|s_{\ell}|^{p-q-2}=O(h^{p})
	\end{split}
\end{equation}
where the last inequality because $|s_{\ell}h|\leq 1$ and  $\max\limits_{\ell}|s_{\ell}|$ is $O(1)$.

From Eqs.\eqref{orderh} and \eqref{inequality2}, we obtain $e_n=O(h^p)$. The error bound for general smooth functions $g$ then follows with the Peano kernel argument of \cite{Lubich1993Runge}. Since $g(\tau)\in C^{(\gamma)}([0,T])$ with $\gamma\geq p$, we may treat each of the terms in the Taylor expansion of $g$ separately:
\begin{equation}\label{gtaylor}
	g(T)=\sum_{l=0}^{p-1}\dfrac{T^l}{l!}g^{(l)}(0)+\int_0^T\dfrac{\tau^{p-1}}{(p-1)!}g^{(p)}(T-\tau)d\tau.
\end{equation}
From \cite{Lubich1993Runge}, it is proved that the error of the RK method applied to the equation with inhomogeneity $g^{(l)}(0)t^{l}/l!$ is given by $e_{n}g^{(l)}(0)$.
By Eq.\eqref{gtaylor}, the error bound Eq.\eqref{estimateI3} could be obtained by estimating each term in the Taylor expansion of $g$ separately.

Combining Eqs.\eqref{estimateI1}, \eqref{estimateI2} and \eqref{estimateI3}, we find that the error satisfies $|y(t)-y_h(t)|=O\left(h^{d}+\varepsilon\right)$  for $d=\min\{M+1,~L,~p\}$.
\end{proof}

\begin{remark}
	Differently, the Laplace transform-based methods have a common error bound $O\left(h^{\min\{p,q+1\}}\right)$ \cite{Lubich1993Runge,banjai2011error,schadle2006fast} in which the stage order $q$ is also involved. This $O(h^{q+1})$ comes from the contour integral over the segment $|\Im (\tilde{\lambda})| \leq 1/h$ of the Laplace transform with $\lambda$ the integration path (see the proofs of Theorem 4.1 in \cite{banjai2011error} and Lemma 5.1 in \cite{Lubich1993Runge} for examples). Our convolution method takes advantage of the SOE constructed by the VPMR method, namely the maximal exponent which is independent of the step size $h$, can easily guarantee $|s_{\ell}h|\leq 1$. In fact, the proof of Theorem \ref{theorem1} is similar to the proof of Lemma 5.1 in \cite{Lubich1993Runge}, sharing the same $O(h^{p})$ convergence rate. This elimination of $O(h^{q+1})$ term will improve the convergence rate when $p>q+1$, as the Lobatto IIIC method (with $p=4$ and $q=2$) which is used in the numerical tests of this paper. In other words, our method avoids solving stiff problem via the RK.
\end{remark}

\begin{remark}\label{choosing}
	In Section \ref{fastconvolution}, we propose the $t_0=O(h)$ condition to guarantee  the $O(h^{M+1}+h^{L})$ decay of the error bound of approximation Eq.\eqref{I_1}. In practice, considering the error estimate in Eq.\eqref{estimateI1}, the choice of $t_0$ has a broader rule that $t_{0}^{M+1}\sim \varepsilon$ and $\int_{0}^{t_0}|f_{M}(\tau)|d\tau=O(1)$ which will not affect the overall accuracy and independent of $h$. In other words, choosing $t_0$ independent of $h$ has an additional advantage that one SOE can be used for different $h$. This rule for the choice of $t_0$ can be also used for the algorithms developed in Section \ref{convolutionquadrature}.
\end{remark}

\subsection{Numerical Examples}
We present numerical results to illustrate the performance of the SOE approximation method developed in this paper. We test the performance of the SOE to approximate the exact kernels $f(x)$ with the increase of $P$. To assess the accuracy, we compute the maximum error $E_{\infty}$ of the resulted SOE approximation $f_{\text{es}}(x)=\sum_{j=1}^{P}m_je^{-s_jx}$ with $P$. It can be viewed as an approximation of the continuous $L^{\infty}$ norm defined by,
\begin{equation}\label{mearsurement3}
	E_{\infty}=\max\left\{\left|f_{\text{es}}(x_{i})-f(x_{i})\right|,~i=1,\cdots,M\right\},
\end{equation}
where $\{x_{i},~i=1,\cdots,M\}$ are monitoring points randomly distributed from $(\delta,10]$ with $\delta\ll 1$ and we take $M=10000$. Second, the application and behavior of the fast convolution quadrature algorithm are illustrated from some examples in which the Lobatto IIIC \cite{Wanner1996} is employed as the RK method, i.e., with $S=3$, $p=4$ and $q=2$. The error and convergence order are used to measure the accuracy of the algorithm. The SOE for all different kernels are done with manually tuned parameters, which is optimized to obtain the required accuracy at a near-minimal number of exponentials. All the calculations in this section are performed with Matlab code on an Intel TM core of clock rate $2.50$ GHz with $24$ GB of memory.

In Eq.\eqref{exponential}, we take
a Gaussian kernel $f(\tau)=e^{-\tau^2/4}$,  $g(\tau)=\sin \tau$, and calculate the temporal convolution,
\begin{equation}\label{smoothconvolution}
y(t)=\int_{0}^{t}e^{-\frac{(t-\tau)^2}{4}}\sin \tau d\tau.
\end{equation}
The reference ``exact" solution is obtained via adaptive Gauss-Kronrod quadrature with $10^{-14}$ absolute accuracy.
Table \ref{Convtable} displays the results of $y$ at time $t=1, 4$ and $10$ for different
time steps. The SOE approximation parameters take $\varepsilon=8.1e-14$, $n_c/(2n-1)=1/8$ and the reduced number of exponentials is $P=20$.
A fourth order convergence in  $h$ is clearly shown, in agreement with the convergence order of the RK.
It is also found that the error does not accumulate with the increase of $t$. Figure \ref{Time} presents the CPU time as function of $t$,
and the linear scaling with respect to $t$ is illustrated.
\begin{table}[h]
	\centering
	\fontsize{10}{10}\selectfont
	\begin{threeparttable}
		\caption{Absolute errors and convergence rates to calculate convolution Eq.\eqref{smoothconvolution} for different $t$}
		\label{Convtable}
		\begin{tabular}{ccccccc}
			\toprule
			\midrule
Step size $h$ &$t=1$&Order&$t=4$&Order&$t=10$&Order\cr
			\midrule
			0.25& $4.49e-6$ & - & $3.31e-6$ & - & $3.53e-6$ & - \cr
			0.1& $1.19e-7$ & 3.93 & $1.03e-7$ & 3.62 & $1.06e-7$ & 3.70\cr
			0.05& $7.46e-9$ & 3.95 & $6.79e-9$ & 3.71 &$6.90e-9$ & 3.77\cr
			0.025&$4.68e-10$&3.96 & $4.36e-10$ & 3.77 &$4.41e-10$ & 3.82\cr
			0.01&$1.20e-11$&3.97 & $1.14e-11$ & 3.82 &$1.15e-11$ & 3.86\cr
			0.005&$7.21e-13$&3.98 & $6.96e-13$ & 3.85 &$7.10e-13$ &3.88\cr
			\midrule
			\bottomrule
		\end{tabular}
	\end{threeparttable}
\end{table}

\begin{figure}[h]
	\centering
	\includegraphics[width=0.6\textwidth]{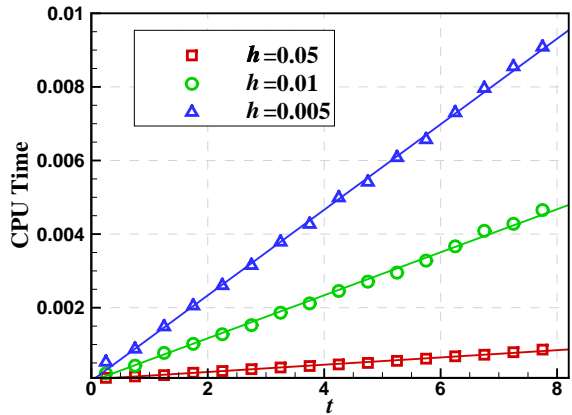}
	\caption{CPU time as function of $t$ for different time steps $h=0.05$ (red square), $0.01$ (green circle) and $0.005$ (blue triangle).
The solid lines are linear fitting of these data.}
	\label{Time}
\end{figure}

The next example studies the convolution of the singular power function $f(\tau)=\tau^{\alpha-1}$ with $0<\alpha<1$
and a smooth source $g(\tau)=\cos\tau$. The convolution is called the Riemann-Liouville fractional integral \cite{Srivastava2013} and is written as,
\begin{equation}\label{singularKernel}
y(t)=\dfrac{1}{\Gamma(\alpha)}\int_{0}^{t}(t-\tau)^{\alpha-1}\cos \tau d\tau,
\end{equation}
where $\Gamma(\alpha)$ is the Gamma function. This problem has exact solution,
\begin{equation}\label{exactsolution}
y(t)=\frac{2^{1-\alpha}\sqrt{\pi}t^{\alpha} }{\alpha\Gamma\left(\frac{\alpha}{2}\right)\Gamma\left(\frac{1+\alpha}{2}\right)}\widehat{\text{H}}\left(1,\left[\frac{1}{2}(1+\alpha),1+\frac{1}{2}\alpha\right],-\frac{t^{2}}{4}\right),
\end{equation}
where $\widehat{\text{H}}(\{a_{i}\}_{i=1}^{\eth_{1}},\{b_j\}_{j=1}^{\eth_{2}},\tau)$ is the generalized hypergeometric function defined as follows,
\begin{equation}
\widehat{\text{H}}(\{a_{i}\}_{i=1}^{\eth_{1}},\{b_j\}_{j=1}^{\eth_{2}},\tau)=\sum_{\ell=0}^{\infty}\left(\dfrac{\prod_{i=1}^{\eth_{1}}(a_{i})_{\ell}}{\prod_{j=1}^{\eth_{2}}(b_{j})_{\ell}}\right)\left(\dfrac{\tau^{\ell}}{\ell!}\right),
\end{equation}
with $\eth_{1}$ and $\eth_{2}$ being two positive integers and  $(\cdot)_{\ell}=\Gamma(\cdot+\ell)/\Gamma(\cdot)$ is the Pochhammer symbol.
In the SOE approximation, one takes $n_c$ such that $n_c/(2n-1)=0.15, 0.2$ and $0.4$ for $\alpha=0.1, 0.5$ and $0.9$, respectively. $t_0$ takes $0.05$, such that the values of $\int_{0}^{t_0}\tau^{\alpha-1}d\tau$ are $7.411$, $0.447$, and $0.075$ for $\alpha=0.1, 0.5$ and $0.9$, respectively.
And the number of exponentials keeps $P=640$. The error level of the approximation is $\sim 10^{-9}$.
In Eq.\eqref{split}, the convolution is split into $I_1$ and $I_2$.
We use a four-order scheme in $I_1$ and the Lobatto IIIC method for $I_2$.
The maximal absolute errors of $I$ for different $t$ are presented in Table \ref{Convtable2}, together with the corresponding convergence rates. Most of the data show approximate fourth-order convergence,
in agreement with the theoretical analysis, whereas some disagreements appear due to the possible accumulate of rounding error comes from the SOE or the interpolation, especially for $\alpha=0.1$ which corresponding to more singular nature.

It is noted that the power function as the kernel function requires a large $P$ to achieve high accuracy and the SOE approaches have been studied
in literature \cite{Beylkin2010Approximation,Yong2018The,Beylkin2005Approximation}. Typically, one shall use hundreds of exponentials to achieve
error of $8\sim9$ digits, but the maximal exponent is $\sim 10^3$ which dramatically decrease the convergence precision of RK.
The VPMR has slightly less number of exponentials for this level of accuracy, but with controllable upperbound of the positive exponents,
thus has better performance. We remark that the convolution quadrature will be improved if one introduces
better techniques such as the Ewald splitting (see \cite{JinLiXuZhao2020} for recent work and reference therein) and approximates the smooth part by the SOE expansion. This is not the central issue of this work,
and we save it for future study.

\begin{table}[h]
	\centering
	\fontsize{10}{10}\selectfont
	\begin{threeparttable}
		\caption{Absolute errors and convergence rates to calculate convolution Eq.\eqref{singularKernel} for different $t$}
		\label{Convtable2}
		\begin{tabular}{cccccccc}
			\toprule
			\midrule
            $\alpha$ &Step size $h$ &$t=1$&Order&$t=4$&Order&$t=8$&Order\cr
            \midrule
			\multirow{6}{*}{0.1}&0.25& $4.11e-5$ &-& $4.11e-5$ &-& $7.97e-5$ &-\cr
			&0.1& $4.61e-6$ & 3.64 & $1.73e-6$ & 3.45 & $3.04e-6$ & 3.57\cr
			&0.0625& $7.80e-7$ & 3.69 & $3.10e-7$ & 3.52 & $5.29e-7$ & 3.62\cr
			&0.05& $3.32e-7$ & 3.71 & $1.35e-7$ & 3.55 &$2.28e-7$ & 3.64\cr
			&0.025&$2.25e-8$&3.76 & $9.62e-9$ & 3.63 &$1.58e-8$ & 3.70\cr
			\midrule
			\multirow{6}{*}{0.5}&0.25& $4.22e-5$ &-& $1.02e-5$ &-& $2.41e-5$ &-\cr
			&0.1& $1.40e-6$ & 3.72 & $3.95e-7$ & 3.55 & $8.26e-7$ & 3.68\cr
			&0.0625& $2.31e-7$ & 3.76 & $6.85e-8$ & 3.61 & $1.39e-7$ & 3.72\cr
			&0.05& $9.75e-8$ & 3.77 & $2.94e-8$ & 3.63 &$5.92e-8$ & 3.73\cr
			&0.025&$6.55e-9$&3.81 & $2.40e-9$ & 3.63 &$4.34e-9$ & 3.74\cr
			\midrule
			\multirow{6}{*}{0.9}&0.25& $5.54e-6$ &-& $1.55e-6$ &-& $3.74e-6$ &-\cr
			&0.1& $1.69e-7$ & 3.81 & $4.82e-8$ & 3.78 & $1.09e-7$ & 3.86\cr
			&0.0625& $2.72e-8$ & 3.84 & $7.49e-9$ & 3.84 & $1.58e-8$ & 3.94\cr
			&0.05& $1.14e-8$ & 3.84 & $2.94e-9$ & 3.89 &$5.41e-9$ & 4.06\cr
			&0.025&$8.88e-10$&3.80 & $1.96e-10$ & 3.90 &$1.47e-9$ & 3.41\cr
			\midrule
			\bottomrule
		\end{tabular}
	\end{threeparttable}
\end{table}

\section{Convolution Quadrature Equations}\label{convolutionquadrature}
In this section, we extend the SOE-based fast convolution method to solve two kinds of convolution equations,
which has strong connection with the solution of many problems in applications \cite{Corduneanu1973,Srivastava2013}.
Our method provides both memory-saving and high-efficiency with $O(N)$ complexity for $N$ time steps, avoiding solving stiff problems.
\subsection{Linear convolution equations}\label{linearconvolution}
We consider  a class of linear convolution equations of the form,
\begin{equation}\label{Ht}
(1-\varpi) g(t)+H(t)=\int_0^tf(t-\tau)g(\tau)d\tau
\end{equation}
where $H(\tau)$ is a given source, $f(\tau)$ is the kernel function, $\varpi$ is a real parameter,
and $g(\tau)$ is unknown except the initial point $g(0)=g_0$. This class of integral equations has been
frequently studied, e.g., it can be derived from solving linear parabolic and hyperbolic evolution equations
by the method of lines \cite{lopez2013generalized}. When the kernel takes $f(t-\tau)=(t-\tau)^{-\alpha}$
with parameter $0<\alpha<1$, the equation Eq.\eqref{Ht} is the so-called Abel integral equation for $\varpi=1$
or the generalized Abel integral equation for $\varpi\neq1$. The equation Eq.\eqref{Ht} is the solution of the reduced Abel problem
and is essentially equivalent to the theory of generalized differentiation and integration \cite{Srivastava2013,Tamarkin1930}.

When the kernel function is not singular, we approximate $f(t-\tau)$ by its SOE expansion.
The discretization of Eq.\eqref{Ht} by the RK (as in Eq.\eqref{final}) at time $t=Nh$ reads,
\begin{equation}\label{Htd}
(1-\varpi) g_N + H_N=\sum_{\ell=1}^Pm_{\ell}\left[r(z_{\ell})Y_\ell^{N-1}+h \bm{\psi}_\ell \bm{g}_{N-1}\right]
\end{equation}
where $H_j=H(jh)$, $\bm{\psi}_\ell$ and $\bm{g}_{N-1}$ are defined in Section \ref{fastconv}. We define $g(t_j)$ as the integer stage and $g(t_j+ch)$ the internal stage when $0<c<1$. To reduce unknowns, one can approximate the internal stages by integer stages using the $m$-point interpolation,
\begin{equation}\label{interpol}
g(t_j+c_ih)=\sum_{\ell=1}^m\alpha^{i}_\ell g(t_{j-\ell+2})
\end{equation}
where $\alpha^i_\ell$ are interpolation weights. A recursive scheme is then formulated by solving a linear equation with one unknown quantity
at each time step, and thus the solution can be calculated explicitly.

If the kernel has a singularity at the origin, one employs the aforementioned splitting technique Eq.\eqref{split} to remove it.
Let $t_0=n_0h\ll1$ with integer $n_0$. When $t<t_0$, one replaces $f(t-\tau)$ by the generalized Taylor expansion  and $g(\tau)$ by
the local interpolation polynomial, respectively. The convolution equation then degenerates into a simple system of linear equations. When $t\geq t_0$, the convolution equation Eq.\eqref{Ht} reads,
\begin{equation}\label{t_t_0}
(1-\varpi)g(t) +H(t)=\int_0^{t_0}f(\tau)g(t-\tau)d\tau+\int_0^{T}f(t-\tau)g(\tau)d\tau.
\end{equation}
Approximating $f(t-\tau)$ by the SOE on $[0,T]$ and Eq.\eqref{t_t_0} can
be solved with exactly the same way as the nonsingular case.

We consider the approximation error for the case of singular kernels. For the singular part in the right hand of Eq.\eqref{t_t_0}, $f(\tau)$ is approximated
by $f_{M}(\tau)$ with $\tau^{-\alpha}$ the leading order asymptotic of the kernel, and $g(t-\tau)$ is approximated as an $L$-order interpolation. It is not difficult
to prove that the integration of singular integrand within $[0,t_0]$ has error of $O(h^{M+1}+h^{L})$.
For the smooth part in the right hand of Eq.\eqref{t_t_0}, kernel $f(t-\tau)$ is approximated by its $P$-term SOE expansion $f_{\text{es}}(t-\tau)$
within a prescribed $\varepsilon$ and the internal stages of $g(\tau)$ are approximated by an $m$-point
interpolation at the integer stages (as shown in Eq.\eqref{interpol}). With an $L$-stable $S$-stage RK method with order $p$ and $q+1\leq p$,
one has,
\begin{equation}\begin{split}
(1-\varpi) g(t)+H(t)=&\sum_{\ell=1}^{P}m_{\ell}e^{-s_\ell t_0}\int_{0}^{T}e^{-s_{\ell}(T-\tau)}g(\tau)d\tau+\int_{0}^{T}f_{\varepsilon}(t-\tau)g(\tau)d\tau\\
&+\int_0^{t_0}f(\tau)g(t-\tau)d\tau\\
=&\sum_{\ell=1}^{P}M_{\ell} Y_\ell(T)+\int_0^{t_0}f(\tau)g(t-\tau)d\tau+O(\varepsilon),
\end{split}\label{erroranalysis}
\end{equation}
where $M_\ell=m_\ell e^{-s_\ell t_0}$, $f_{\varepsilon}=f-f_{\text{es}}$, and $Y_\ell(T)$ is the solution of the ODE,
$
Y_{\ell}'(\tau)=-s_{\ell}Y_{\ell}(\tau)+g(\tau)~~\text{with}~Y_{\ell}(0)=0.
$
Suppose that $T=N_Th$. To clearly see the error behavior, by using Eq.\eqref{Htd} one rewrites Eq.\eqref{erroranalysis} as,	
\begin{equation}\label{erroranaysisI}
(1-\varpi) g(t)+H(t)=\sum_{\ell=1}^{P}M_{\ell}Y_\ell^{N_T}+\int_{0}^{t_0}f_{M}(\tau)G(t-\tau)d\tau+O(h^{d}+\varepsilon),
\end{equation}
where $Y_{\ell}^{N_T}$ is the numerical discretization of $Y_{\ell}(T)$ via RK. Note that $c_{q}=1$
and the stability function $r(z_\ell)$ satisfies $r(z_\ell)=e^{z_\ell}+O(h^{p+1})$ \cite{Wanner1996} thus it
is $O(1)$ for every $\ell$.

Let us write the middle term in Eq.\eqref{erroranaysisI} as $\displaystyle\int_{0}^{t_0}f_{M}(\tau)G(t-\tau)d\tau=R(t)+\kappa g(t),$
where the coefficient $\kappa$ of $g(t)$ is obviously $O(h^{1-\alpha})$. The solution of $g(t)$ is given by,
\begin{equation}
\begin{split}
g(t)&=\dfrac{1}{1-\varpi-\kappa}\left[h\sum_{\ell=1}^{P}\sum_{j=0}^{N_T-1}\sum_{s=1}^{q}M_{\ell}v_{N_T-1-j}^{s\ell}g_{j}^{s}+R(t)-H(t)+O(h^{d}+\varepsilon)\right]\\
&=\dfrac{1}{1-\varpi-\kappa}\left[h\sum_{j=0}^{N_T-1}\sum_{k=1}^{m}\xi_{jk}g(t_{j-k+2})+R(t)-H(t)+O(h^{d}+\varepsilon)\right],
\end{split}
\end{equation}
where $v_{N_T-1-j}^{s\ell}$ is the $s$-th component of $\bm{v}_{N_T-1-j}(z_\ell)$,  $g_{j}^{s}=g(t_{j}+c_{s}h)$, and
\begin{equation} \xi_{jk}=\sum_{\ell=1}^{P}\sum_{s=1}^{q}M_{\ell}v_{N_T-1-j}^{s\ell}\alpha_{k}^{s}
\end{equation}
is an $O(1)$ constant related
to the RK method and SOE coefficients. Since the internal point
interpolation is $O(h^{L})$, the error of $g(t)$ is bounded by
\begin{equation}
\gamma_{N}\leq\left|\frac{1}{1-\varpi-\kappa}\left[\sum\limits_{\ell=1}^{N_T}h\Xi_\ell(\gamma_{\ell}+O(h^{L}))+O(h^{d}+\varepsilon)\right]\right|,
\end{equation}
where $\gamma_{\ell}=\left|g(t_{\ell})-g_{\ell}\right|$ and $\Xi_\ell=\sum\limits_{j-k+2=\ell}\xi_{jk}$.

Obviously, if $\varpi=1$, the convergence rate for solving $g(t)$ is $O(h^{d-1+\alpha}+\varepsilon h^{\alpha-1})$. Otherwise, if $\varpi\neq 1$, the convergence rate is $O(h^{d}+\varepsilon)$.
Note that the error in the initial value will not affect
the convergence rate after a period of time.
\subsection{Nonlinear Volterra integral equations}
As another typical problem, we consider the nonlinear Volterra integral equation,
\begin{equation}\label{Volterra}
u(t)=a(t)+\int_0^tf(t-\tau)g(\tau,u(\tau))d\tau,~~t\geq 0
\end{equation}
where $g(\tau,u(\tau))$ is a smooth nonlinear function and $a(\tau)$ is an inhomogeneous known function. The unknown $u(\tau)$ is needed to be solved at the uniform time steps with the initial condition $u(0)=u_0$. The nonlinear Volterra integral equation Eq.\eqref{Volterra} arises in a variety of applications, including continuum mechanics, potential theory, electricity and magnetism \cite{jaswon1977integral,jiang2004second,levinson1960nonlinear}. The extension of the RK-based algorithm is
 less immediate, because the integral approximation uses the internal stages of the RK method.

We use the same setup as before with the kernel singularity at the origin, uniform time step $h$
and parameter $t_0=n_0h$ satisfying $0<t_0\ll1$. When $t<t_0$, similar as before, $f(t-\tau)$ and $g(t,u(t))$
are approximated by the generalized Taylor expansion and the local polynomial interpolation, respectively.
The numerical solution of $u(t)$ at the interpolating points on $[0,t_0]$ is then solved via Newton's iteration algorithm.
When $t\geq t_0$, $f(t-\tau)$ is approximated by its SOE expansion on $[0,T]$ such that
the solution is rewritten as the summation of a known function and the convolution of the exponentials and a nonlinear function.
We can introduce an RK-based convolution quadrature under the assumptions of Section \ref{fastconvolution},
then follow a recursive process to solve $u(t)$ step by step.
The discretization of the nonlinear Volterra equation
at time $t=Nh$ reads,
\begin{equation}
u(t)\approx u_N=a_N+\sum_{\ell=1}^PM_{\ell}\left[r(z_{\ell})Y_\ell^{N_T-1}+h \bm{\psi}_\ell \bm{g}_{N_T-1}\right]+\int_{0}^{t_0}f_{M}(\tau)G(t-\tau,u(t-\tau))d\tau.
\end{equation}
The values at internal stages $u(t_j+c_ih)$ with $0<c_i<1$ are approximated by interpolation at the integer points,
which reduces the cost of solving nonlinear equation.

We analyze the convergence rate of the algorithm.
The kernel $f(\tau)$ is approximated by its generalized $M-$order Taylor expansion $f_{M}(\tau)$ on $[0,t_0]$
and its SOE expansion $f_{\text{es}}(t-\tau)$ with tolerance $\varepsilon$ on $[0,T]$.
$g(t-\tau,u(t-\tau))$ is approximated by $L$-order interpolation $G(t-\tau,u(t-\tau))$ on $[0,t_0]$,
and the internal stages of $g(\tau)$ are approximated by its $m$-point interpolation at the integer points
on $[0,T]$. The numerical error is to estimate,
\begin{equation}
|u(t)-u_N|= \left|E_{t_0}+\int_{0}^{T}\left(f(t-\tau)-f_{\text{es}}(t-\tau)\right)g(\tau,u(\tau))d\tau + \sum_{\ell=1}^{P}E_{\text{RK}}^{\ell}(t)\right|
\end{equation}
where
\begin{equation}
E_{t_0}=\int_{0}^{t_0}\left[f(\tau)g(t-\tau,u(t-\tau))-f_{M}(\tau)G(t-\tau,u(t-\tau))\right]d\tau,
\end{equation}
and $E^{\ell}_{\text{RK}}(t)$ is the error introduced by employing
the RK method to solve the  ODE.  $E_{t_0}$ has an estimation,
\begin{equation}\label{ESTI1}
\begin{split}
|E_{t_0}|&=\left|\int_{0}^{t_0}(f(\tau)-f_{M}(\tau))g(t-\tau,u(t-\tau))+f_{M}(\tau)(g(t-\tau,u(t-\tau))-G(t-\tau,u(t-\tau)))d\tau\right|\\
&\leq C_{0}t_0^{M+1}\left|\int_{0}^{t_0}g(\tau,u(\tau))d\tau\right|+C_{1}h^{L}\sum_{i=0}^{L}\|\partial_t^i\partial_u^{L-i}g(\tau,u(\tau))\|_{\infty}\int_{0}^{t_0}\left|f_{M}(t-\tau)\right|d\tau\\
&\leq C_{0}(n_0h)^{M+1}\|g\|_{L_{1}}+C_{1}C_{f_{M},t_0}h^{L}\sum_{i=0}^{L}\|\partial_t^i\partial_u^{L-i}g(\tau,u(\tau))\|_{\infty}
\end{split}
\end{equation}
where $\displaystyle C_{f_M,t_0}=\int_{0}^{t_0}\left|f_M(t-\tau)\right|d\tau$ is bounded, and
$C_0$ and $C_1$ are constants. The error estimate of the exponential part reads,
\begin{equation}\label{ESTI2}
\left|\int_{0}^{T}(f(t-\tau)-f_{\text{es}}(t-\tau))g(\tau,u(\tau))d\tau\right|\leq \varepsilon \|g\|_{L_{1}}.
\end{equation}
For $E^{l}_{\text{RK}}(t)$, we have the following theorem.
\begin{theorem}\label{theorem4}
	For nonlinearity $g(\tau,u(\tau))$, assume that the following local Lipschitz condition for $\eta>0$,
	\begin{equation}\label{local}
	|g(\tau,v_1)-g(\tau,v_2)|\leq C(\eta)\cdot|v_1-v_2|~~\text{for}~|v_1|\leq \eta,~|v_2|\leq \eta,~0<\tau<t.
	\end{equation}
	Consider a $S$-stage implicit RK method with order $p$ and stage order $q\leq p-1$, satisfying stable condition Eq.\eqref{L-condition} and $\max\limits_{\ell}|s_{\ell} h|\leq 1$. If the internal stages are approximated by integer stages using the $m$-point $L$-order interpolation Eq.\eqref{interpol}, the error of RK is bounded by
	\begin{equation}\label{ESTI3}
	\left|E^{\ell}_{\text{RK}}(t)\right|\leq C_{2}(\eta)\left[\max_{j\in\{0,\cdots,N_T\}}\left|u_j-u(t_j)\right|+h^{p}+h^{L}\right].
	\end{equation}
\end{theorem}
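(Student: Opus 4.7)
The plan is to split $E^{\ell}_{\text{RK}}(t)$ into two conceptually separate pieces: (i) the error of the RK scheme applied to the linear ODE $Y_{\ell}' = -s_{\ell}Y_{\ell} + G(\tau)$ with the inhomogeneity $G(\tau) := g(\tau,u(\tau))$ treated as a known smooth forcing; and (ii) the perturbation introduced by replacing $G$ at each internal stage by the $m$-point interpolation built from the numerical values $u_j$. The Lipschitz hypothesis Eq.\eqref{local} together with the a priori bound $|u(\tau)|\leq \eta$ on $[0,t]$ (inherited from the continuous Volterra equation Eq.\eqref{Volterra}) make the Lipschitz constant $C(\eta)$ uniform, while the VPMR-produced SOE satisfying $\max_\ell |s_\ell h|\leq 1$ together with L-stability keeps all weights $r(z_\ell)^n$, $\bm{\psi}_\ell$ and $\bm{v}_n(z_\ell)$ uniformly bounded in $\ell$ and $n$.

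For part (i), I would apply Theorem \ref{theorem1} directly with $g$ replaced by $G$. The required $C^{p}$ regularity of $G$ follows from the smoothness of $g$ and an inductive bootstrap on Eq.\eqref{Volterra}, which promotes the regularity of $a$ and of the smooth part of $f$ into regularity of $u$; the derivative bounds $\sum_{k=0}^{p-1}|G^{(k)}(0)|+\max_\tau |G^{(p)}(\tau)|$ are then controlled by constants depending only on $\eta$ through the local Lipschitz data. Theorem \ref{theorem1} yields an $O(h^{p})$ contribution to $E^{\ell}_{\text{RK}}$, which can be absorbed into $C_{2}(\eta)$.

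For part (ii), the per-step perturbation entering the RK recursion Eq.\eqref{yl} at internal stage $t_j+c_i h$ is
\begin{equation*}
\Delta_{j}^{i} = \sum_{\nu=1}^{m}\alpha^{i}_{\nu}\,g\!\left(t_{j-\nu+2},u_{j-\nu+2}\right) - g\!\left(t_j+c_i h,\, u(t_j+c_i h)\right),
\end{equation*}
which I decompose as the standard interpolation remainder $\bigl[\sum_{\nu}\alpha^{i}_{\nu}G(t_{j-\nu+2})-G(t_j+c_i h)\bigr]$ plus the Lipschitz term $\sum_{\nu}\alpha^{i}_{\nu}\bigl[g(t_{j-\nu+2},u_{j-\nu+2})-g(t_{j-\nu+2},u(t_{j-\nu+2}))\bigr]$. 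The first bracket is the classical $L$-order interpolation error $O(h^{L})$ since $G\in C^{L}$, and Eq.\eqref{local} bounds the second by $C(\eta)\max_{j}|u_j-u(t_j)|$. Propagating these per-stage perturbations through the explicit convolution form of Eq.\eqref{yl}, and summing the uniformly bounded weights $m_{\ell}\bm{v}_{N_T-1-j}(z_\ell)$ over $j\leq N_T$ (which contributes only an $O(T)$ prefactor thanks to $|r(z_\ell)|\leq 1$), produces exactly the $h^{L}+\max_{j}|u_j-u(t_j)|$ contribution appearing in Eq.\eqref{ESTI3}.

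The main obstacle is part (i): Theorem \ref{theorem1} was proved for a prescribed smooth forcing, whereas here the forcing is data-dependent through the unknown $u$. The hard part is justifying the uniform bound on $\|G^{(k)}\|_{\infty}$ without circularity---one must first establish, from the local Lipschitz hypothesis and the Volterra structure of Eq.\eqref{Volterra}, that $u$ lies in a fixed ball of radius $\eta$ on $[0,t]$ and inherits enough regularity from $a$ and from the smooth part of $f$ so that $G$ satisfies the hypotheses of Theorem \ref{theorem1} with a constant depending only on $\eta$. Once this a priori control is in hand, the remainder of the argument is discrete bookkeeping on Eq.\eqref{yl}.
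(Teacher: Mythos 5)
Your proposal follows essentially the same route as the paper: its proof likewise writes $E^{\ell}_{\text{RK}}(t)$ as the $O(h^{p})$ error of the RK scheme with the exact forcing $g(\tau,u(\tau))$ (valid thanks to $\max_{\ell}|s_{\ell}h|\leq 1$, as in Theorem \ref{theorem1}) plus the internal-stage perturbation, which it splits into the $O(h^{L})$ interpolation remainder and the Lipschitz term $C(\eta)\max_{j}|u_j-u(t_j)|$, and then sums the uniformly bounded weights $hM_{\ell}v^{s\ell}_{N_T-1-j}$. The regularity of $G=g(\cdot,u(\cdot))$ that you flag as the hard part is left implicit in the paper's single-display proof, so your treatment is, if anything, more explicit on that point.
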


\begin{proof}
	Similar to the analysis given in Section \ref{linearconvolution}, using the local Lipschitz condition, we obtain
\begin{equation}\label{identity}
	\begin{split}
	\left|E_{\text{RK}}^{\ell}(t)\right|&=\left|hM_{\ell}\sum_{j=0}^{N_T-1}\sum_{s=1}^{q}v_{N_T-1-j}^{s\ell}\left[\sum_{k=1}^{m}\alpha_{k}^{s}g(t_{j-k+2},u_{t_{j-k+2}})-g_{j}^{s}\right]+O(h^{p})\right|\\
	&=\left|hM_{\ell}\sum_{j=0}^{N_T-1}\sum_{s=1}^{q}v_{N_T-1-j}^{s\ell}\sum_{k=1}^{m}\alpha_{k}^{s}\left[g(t_{j-k+2},u_{t_{j-k+2}})-g_{j-k+2}^{0}\right]+O(h^{p}+h^{L})\right|\\
	&\leq \left|C(\eta)hM_{\ell}\sum_{j=0}^{N_T-1}\sum_{s=1}^{q}v_{N_T-1-j}^{s\ell}\sum_{k=1}^{m}\alpha_{k}^{s}\left|u_{j-k+2}-u(t_{j-k+2})\right|+O(h^{p}+h^{L})\right|\\
	&\leq C_{2}(\eta)\left[\max_{j\in\{0,\cdots,N_T\}}\left|u_j-u(t_j)\right|+h^{p}+h^{L}\right]
	\end{split}
\end{equation}
where $g_{j}^{s}=g(t_j+c_sh,u(t_j+c_sh))$ are the exact value of the stages. Note that the first identity in Eq.\eqref{identity} uses the condition $\max\limits_{\ell}|s_{\ell}h|\leq 1$ and $s_{\ell}$ being $O(1)$ for all $\ell$. Otherwise, the $O(h^{p})$ term should be replaced by $O(h^{\min\{p,q+1\}})$.

\end{proof}

The error bound of $\left|E^{\ell}_{\text{RK}}(t)\right|$ in Theorem \ref{theorem4} contains the maximum error of the solution at the time steps less than the current time. Choosing $t_0$ with the rule in Remark \ref{choosing} can naturally guarantee precision. Combining Eqs.\eqref{ESTI1}, \eqref{ESTI2} and \eqref{ESTI3}, we obtain the following error bound
\begin{equation}
|u(t)-u_n|= O(h^{d}+\varepsilon).
\end{equation}
\subsection{Examples}
In this subsection, there are four examples, among which the first two are linear integral equations and the third and the fourth are nonlinear integral equations. All the calculations in this section are performed with Matlab code on an Intel TM core of clock rate $2.50$ GHz with $24$ GB of memory.
\subsubsection{Linear convolution equation}
The first example is with the Gaussian kernel,
\begin{equation}\label{nonsin1.5}
	g(t)+\dfrac{\sqrt{\pi}}{2e}[f_1(t)+f_2(t)]-\cos(t)=\int_0^te^{-\frac{(t-\tau)^2}{4}}g(\tau)d\tau,
\end{equation}
with
\begin{equation}
	 f_1(t)=\left[\text{erf}\left(\dfrac{1}{2}(t-2i)\right)+\text{erf}\left(\dfrac{1}{2}(t+2i)\right)\right]\cos(t),
\end{equation}
\begin{equation}
	 f_2(t)=\left[-\text{erfi}\left(1-\dfrac{it}{2}\right)-\text{erfi}\left(1+\dfrac{it}{2}\right)+2\text{erfi}(1)\right]\sin(t).
\end{equation}
Here, $\text{erf}(\cdot)$ and $\text{erfi}(\cdot)$ are the error and imaginary error functions.
Eq.\eqref{nonsin1.5} has exact solution $g(t)=\cos{t}$. The parameters of RK and interpolation method are the same as the above case.
The SOE approximation parameters take $\varepsilon=8.1e-14$, $n_c/(2n-1)=1/8$ and $P=20$. A fourth order convergence in step size $h$ is shown, in agreement with the theoretical analysis.

\begin{table}[!h]
	\centering
	\fontsize{10}{10}\selectfont
	\begin{threeparttable}
		\caption{Absolute errors and convergence rates for the solution of Eq.\eqref{nonsin1.5} at different $t$}
		\label{Convtable7}
		\begin{tabular}{cccccccc}
			\toprule
			\midrule
			Step size $h$&$t=1$&Order&$t=4$&Order&$t=8$&Order\cr
			\midrule
			$0.1$&$3.25e-6$&-&$1.47e-5$&$-$&$1.71e-4$&$-$\cr
			$0.05$&$2.17e-7$&$3.91$&$9.50e-7$&$3.95$&$1.12e-5$&$3.94$\cr
			$0.025$&$1.41e-8$&$3.92$&$6.16e-8$&$3.95$&$7.27e-7$&$3.94$\cr
			$0.01$&$3.73e-10$&$3.94$&$1.62e-9$&$3.96$&$1.92e-8$&$3.95$\cr
			$0.005$&$2.35e-11$&$3.95$&$1.02e-10$&$3.96$&$1.21e-9$&$3.96$\cr
			$0.0025$&$1.71e-12$&$3.92$&$6.86e-12$&$3.95$&$8.27e-11$&$3.94$\cr
			\midrule
			\bottomrule
		\end{tabular}
	\end{threeparttable}
\end{table}

The second example is the generalized Abel equation, which has a singular kernel,
\begin{equation}\label{sin1.5}
	3g(t)+H(t)=\int_{0}^{t}(t-\tau)^{-\alpha}g(\tau)d\tau,
\end{equation}
where $H(t)$ is obtained with respect to higher accuracy via our algorithm by taking $g(\tau)=\cos{\tau}$. For the purpose of numerical test, $g(\tau)$ is solved
by the our numerical scheme, which is compared to $\cos \tau$ for the measurement of the error. The SOE approximation parameters take $\varepsilon=1.4e-8$, $n_c/(2n-1)=0.2$ and $P=600$. The $t_0$ is fixed as $0.05$.
Table \ref{Convtable8} displays the error and convergence rate for $\alpha=0.5$, and again one can observe the errors in agreement with the analysis.
\begin{table}[h]
	\centering
	\fontsize{10}{10}\selectfont
	\begin{threeparttable}
		\caption{Absolute errors and convergences rates for the solution of Eq.\eqref{sin1.5} at different $t$}
		\label{Convtable8}
		\begin{tabular}{ccccccc}
			\toprule
			\midrule
			Step size $h$&$t=2$&Order&$t=6$&Order&$t=10$&Order\cr
			\midrule
			$0.025$&$2.60e-8$&-&$9.80e-6$&$-$&$3.95e-7$&$-$\cr
			$0.01$&$1.13e-8$&$3.74$&$4.25e-8$&$3.74$&$1.71e-7$&$3.74$\cr
			$0.00625$&$1.47e-9$&$4.14$&$5.20e-9$&$4.24$&$1.97e-8$&$4.33$\cr
			$0.005$&$5.30e-10$&$4.25$&$1.90e-9$&$4.30$&$6.80e-9$&$4.33$\cr
			\midrule
			\bottomrule
		\end{tabular}
	\end{threeparttable}
\end{table}

\subsubsection{Nonlinear Volterra integral equation}
The third example arises in the analysis of neural networks with post-inhibitory rebound, where the model \cite{Heiden2013}  is given by,
\begin{equation}\label{SmoothVolterra}
u(t)=1+\int_{0}^{t}(t-\tau)^{3}(4-t+\tau)e^{-t+\tau}\dfrac{u^{4}(\tau)}{1+2u^{2}(\tau)+2u^{4}(\tau)}d\tau.
\end{equation}
The ``exact" solution of Eq.\eqref{SmoothVolterra} at $t=10$ is $u(10)=1.25995582337$ \cite{Heiden2013}.
We calculate the absolute error at time $t=10$ by our algorithm. The interpolation order takes $L=4$.
The SOE parameters are $\varepsilon=10^{-12}$, $n_c/(2n-1)=2.25$ and $P=170$.
The error tolerance for the Newton's method is $10^{-12}$. The accuracy results and CPU time performance (in seconds) are shown in Table \ref{Convtable5}
for different steps, from which the fourth-order convergence of the algorithm is displayed, in agreement with the
theoretical analysis. Due to the number of interative steps of Newton's method is different in each time step, the computational time achieves nearly linear scaling with $h$, demonstrating an attractive performance.

\begin{table}[h]
	\centering
	\fontsize{10}{10}\selectfont
	\begin{threeparttable}
		\caption{Absolute errors, convergence rates and CPU time (seconds) for solving the Volterra equation Eq.\eqref{SmoothVolterra}}
		\label{Convtable5}
		\begin{tabular}{cccc}
			\toprule
			\midrule
			Step size $h$ & ~~~Error~~~ &~~~Order~~~& CPU time \cr
			\midrule
			$1$&$2.65e-2$&$-$&$1.2e-4$\cr
			$0.625$&$3.91e-3$&$3.88$&$1.8e-4$\cr
			$0.5$&$1.44e-3$&$4.02$&$2.7e-4$\cr
			$0.25$&$4.64e-5$&$4.43$&$4.7e-4$\cr
			$0.0625$&$2.48e-7$&$4.12$&$1.3e-3$\cr
			$0.05$&$1.43e-7$&$4.01$&$1.8e-3$\cr
			$0.01$&$1.90e-10$&$4.05$&$7.4e-3$\cr
			\midrule
			\bottomrule
		\end{tabular}
	\end{threeparttable}
\end{table}

The fourth one is with a singular kernel, arising in the theory of superfluidity \cite{levinson1960nonlinear}. The equation
is given by,
\begin{equation}\label{SingularVolterra}
u(t)=-\int_{0}^{t}\dfrac{(u(\tau)-\sin\tau)^{3}}{\sqrt{\pi(t-\tau)}}d\tau.
\end{equation}
In the calculations, we take $t_0=0.05$, $\varepsilon=1.40e-8$, $n_c/(2n-1)=0.4$ and $P=640$. The order of interpolation is $L=4$.
The ``exact" solution is calculated with respect to $h=0.0001$ by our algorithm, with a tolerance of $10^{-10}$ in the iteration method.
Table \ref{Convtable6} displays the errors at $t=2,6$ and $10$ and the corresponding convergence rates. As expected,
a fourth order of convergence is observed, in agreement with theoretical analysis. A few disagreements appear due to the accuracy of the SOE is $10^{-10}$ which is dominated in the error when the step size is small.

\begin{table}[h]
	\centering
	\fontsize{10}{10}\selectfont
	\begin{threeparttable}
		\caption{Absolute errors and convergence rates of the Volterra integral equation Eq.\eqref{SingularVolterra} at different $t$}
		\label{Convtable6}
		\begin{tabular}{ccccccc}
			\toprule
			\midrule
			Step size $h$&$t=2$&Order&$t=6$&Order&$t=10$&Order\cr
			\midrule
			$0.025$&$3.33e-8$&-&$1.31e-7$&$-$&$7.39e-8$&$-$\cr
			$0.0125$&$2.00e-9$&$4.06$&$8.39e-9$&$3.97$&$4.14e-9$&$4.16$\cr
			$0.01$&$8.78e-10$&$3.97$&$3.63e-9$&$3.92$&$1.74e-9$&$4.09$\cr
			$0.00625$&$1.84e-10$&$3.82$&$6.75e-10$&$3.80$&$1.72e-10$&$4.37$\cr
			\midrule
			\bottomrule
		\end{tabular}
	\end{threeparttable}
\end{table}

\section{Conclusions}
We propose an accurate SOE approximation method VPMR for general kernels and develop
an accurate and fast algorithm for the temporal convolution, and integral equations with
convolution kernels. The SOE is constructed by a combination of the VP sum and the
MR method. The VPMR is accurate and efficient with controllable maximal exponents.
As applications of the VPMR, the convolution evaluation is computed with $O(N)$ operations
on uniform $N$ time steps owing to the kernel approximation with exponentials enabling a
recurrence formula solved by L-stable RK methods. The kernel singularity can be treated by
the splitting of the convolution such that the singular part can be calculated by analytical
techniques. The controllable of maximal exponents in the VPMR ensure that the efficiency of
proposed algorithm. Our algorithm is friendly for parallelization and favors easy extensions
to complicated kernel with the SOE. Numerical results for different kernels show its efficiency, accuracy and universality, demonstrating the attractive features for potential applications
in many problems.

\section*{Acknowledgment}
The authors acknowledge the financial support from the National Natural Science Foundation of China (grant Nos. 12071288 and 21773165),
Science and Technology Commission of Shanghai Municipality (grant Nos. 20JC1414100 and 21JC1403700), Strategic Priority Research Program of Chinese Academy of Sciences (grant No. XDA25010403),
and the support from the HPC center of Shanghai Jiao Tong University.

\section*{Declarations}

\subsection*{Conflict of interest}

The authors declare that they have no conflict of interest.

\subsection*{Data Availibility Statement}

The data that support the findings of this study are available from the corresponding author upon reasonable request.


\begin{thebibliography}{10}
\expandafter\ifx\csname url\endcsname\relax
  \def\url#1{\texttt{#1}}\fi
\expandafter\ifx\csname urlprefix\endcsname\relax\def\urlprefix{URL }\fi
\expandafter\ifx\csname href\endcsname\relax
  \def\href#1#2{#2} \def\path#1{#1}\fi

\bibitem{Beylkin2009Fast}
G.~Beylkin, C.~Kurcz, L.~Monz{\'o}n, Fast convolution with the free space
  {Helmholtz Green's} function, Journal of Computational Physics 228~(8) (2009)
  2770--2791.

\bibitem{H1999A}
H.~Cheng, L.~Greengard, V.~Rokhlin, A fast adaptive multipole algorithm in
  three dimensions, Journal of Computational Physics 155~(2) (1999) 468--498.

\bibitem{Yong2018The}
L.~Greengard, S.~Jiang, Y.~Zhang, The anisotropic truncated kernel method for
  convolution with free-space {Green's} functions, SIAM Journal on Scientific
  Computing 40~(6) (2018) A3733--A3754.

\bibitem{Jiang2008Efficient}
S.~Jiang, L.~Greengard, Efficient representation of nonreflecting boundary
  conditions for the time-dependent {Schr{\"o}dinger} equation in two
  dimensions, Communications on Pure and Applied Mathematics 61~(2) (2008)
  261--288.

\bibitem{Lubich2002}
C.~Lubich, A.~Sch{\"a}dle, Fast convolution for nonreflecting boundary
  conditions, SIAM Journal on Scientific Computing 24~(1) (2002) 161--182.

\bibitem{wang2020taylor}
B.~Wang, D.~Chen, B.~Zhang, W.~Zhang, M.~H. Cho, W.~Cai, {Taylor expansion
  based fast multipole method for 3-D Helmholtz equations in layered media},
  Journal of Computational Physics 401 (2020) 109008.

\bibitem{fang1988discrete}
D.~Fang, J.~Yang, G.~Delisle, Discrete image theory for horizontal electric
  dipoles in a multilayered medium, in: IEE Proceedings H-microwaves, Antennas
  and Propagation, Vol. 135, IET, 1988, pp. 297--303.

\bibitem{alparslan2010closed}
A.~Alparslan, M.~I. Aksun, K.~A. Michalski, {Closed-form Green's functions in
  planar layered media for all ranges and materials}, IEEE Transactions on
  Microwave Theory and Techniques 58~(3) (2010) 602--613.

\bibitem{spivak2010fast}
M.~Spivak, S.~K. Veerapaneni, L.~Greengard, {The fast generalized Gauss
  transform}, SIAM Journal on Scientific Computing 32~(5) (2010) 3092--3107.

\bibitem{2020onekernel}
Y.~Zhang, C.~Zhuang, S.~Jiang, Fast one-dimensional convolution with general
  kernels using sum-of-exponential approximation, Communications in
  Computational Physics 29~(5) (2021) 1570--1582.

\bibitem{Beylkin2005Approximation}
G.~Beylkin, L.~Monz{\'o}n, On approximation of functions by exponential sums,
  Applied and Computational Harmonic Analysis 19~(1) (2005) 17 -- 48.

\bibitem{Beylkin2010Approximation}
G.~Beylkin, L.~Monz{\'o}n, Approximation by exponential sums revisited, Applied
  and Computational Harmonic Analysis 28~(2) (2010) 131--149.

\bibitem{Braess1995Asymptotics}
D.~Braess, Asymptotics for the approximation of wave functions by exponential
  sums, Journal of Approximation Theory 83~(1) (1995) 93--103.

\bibitem{Dietrich2005Approximation}
D.~Braess, W.~Hackbusch, Approximation of $1/x$ by exponential sums in
  $[1,\infty)$, IMA Journal of Numerical Analysis 25~(4) (2005) 685--697.

\bibitem{Braess2009On}
D.~Braess, W.~Hackbusch, On the efficient computation of high-dimensional
  integrals and the approximation by exponential sums, in: Multiscale,
  nonlinear and adaptive approximation, Springer, 2009, pp. 39--74.

\bibitem{Evans1980On}
J.~W. Evans, W.~B. Gragg, R.~J. LeVeque, On least squares exponential sum
  approximation with positive coefficients, Mathematics of Computation 34~(149)
  (1980) 203--211.

\bibitem{articleGonchar}
A.~A. Gonchar, E.~A. Rakhmanov, Equilibrium distributions and degree of
  rational approximation of analytic functions, Mathematics of the USSR-Sbornik
  62~(2) (1989) 305.

\bibitem{jiang2019fast}
S.~Jiang, A fast {G}auss transform in one dimension using sum-of-exponentials
  approximations, arXiv: 1909.09825 (2019).

\bibitem{hamming2012numerical}
R.~Hamming, {Numerical Methods for Scientists and Engineers}, Courier
  Corporation, 2012.

\bibitem{Wiscombe1977Exponential}
W.~J. Wiscombe, J.~W. Evans, Exponential-sum fitting of radiative transmission
  functions, Journal of Computational Physics 24~(4) (1977) 416 -- 444.

\bibitem{boyd2010uselessness}
The uselessness of the {Fast Gauss Transform} for summing {Gaussian} radial
  basis function series, Journal of Computational Physics 229~(4) (2010) 1311
  -- 1326.

\bibitem{Lubich1993Runge}
C.~Lubich, A.~Ostermann, {Runge-Kutta methods for parabolic equations and
  convolution quadrature}, Mathematics of Computation 60~(201) (1993) 105--131.

\bibitem{Al2001Approximation}
R.~Albtoush, K.~Al-Khaled, Approximation of periodic functions by {Vall{\'e}e
  Poussin sums}, Hokkaido Mathematical Journal 30~(2) (2001) 269--282.

\bibitem{de1919leccons}
C.~J. de~La~Vall{\'e}e-Poussin, Le{\c{c}}ons sur l'approximation des fonctions
  d'une variable r{\'e}elle, Paris, 1919.

\bibitem{Moore1981Principal}
B.~Moore, Principal component analysis in linear systems: controllability,
  observability, and model reduction, IEEE Transactions on Automatic Control
  26~(1) (1981) 17--32.

\bibitem{Peter2015SIAM}
P.~Benner, S.~Gugercin, K.~Willcox, A survey of projection-based model
  reduction methods for parametric dynamical systems, SIAM Review 57~(4) (2015)
  483--531.

\bibitem{schadle2006fast}
A.~Sch{\"a}dle, M.~L{\'o}pez-Fern{\'a}ndez, C.~Lubich, Fast and oblivious
  convolution quadrature, SIAM Journal on Scientific Computing 28~(2) (2006)
  421--438.

\bibitem{banjai2011error}
L.~Banjai, C.~Lubich, {An error analysis of Runge--Kutta convolution
  quadrature}, BIT Numerical Mathematics 51~(3) (2011) 483--496.

\bibitem{2020kernel}
J.~Liang, Z.~Gao, Z.~Xu, {A kernel-independent sum-of-Gaussians method by de la
  Vallee-Poussin sums}, Advances in Applied Mathematics and Mechanics 13~(5)
  (2021) 1126--1141.

\bibitem{Antoulas2001}
A.~Antoulas, D.~Sorensen, Approximation of large-scale dynamical systems: an
  overview, International Journal of Applied Mathematics and Computer Science
  11~(5) (2001) 1093--1121.

\bibitem{GLOVER1984}
K.~Glover, All optimal hankel-norm approximations of linear multivariable
  systems and their ${L}^\infty$-error bounds, International Journal of Control
  39~(6) (1984) 1115--1193.

\bibitem{Raimund1991}
R.~Ober, Balanced parametrization of classes of linear systems, SIAM Journal on
  Control and Optimization 29~(6) (1991) 1251--1287.

\bibitem{MP}
B.~Barrowes, {Multiple Precision Toolbox for MATLAB}, MATLAB Central File
  Exchange (Retrieved August 10, 2020).

\bibitem{liu1998model}
W.~Liu, V.~Sreeram, K.~L. Teo, Model reduction for state-space symmetric
  systems, Systems \& Control Letters 34~(4) (1998) 209--215.

\bibitem{shampine2008vectorized}
L.~F. Shampine, {Vectorized adaptive quadrature in MATLAB}, Journal of
  Computational and Applied Mathematics 211~(2) (2008) 131--140.

\bibitem{occorsio2018cubature}
D.~Occorsio, G.~Serafini, Cubature formulae for nearly singular and highly
  oscillating integrals, Calcolo 55~(1) (2018) 1--33.

\bibitem{krishnamoorthy2013matrix}
A.~Krishnamoorthy, D.~Menon, {Matrix inversion using Cholesky decomposition},
  in: 2013 signal processing: Algorithms, architectures, arrangements, and
  applications (SPA), IEEE, 2013, pp. 70--72.

\bibitem{halko2011finding}
N.~Halko, P.-G. Martinsson, J.~A. Tropp, {Finding structure with randomness:
  Probabilistic algorithms for constructing approximate matrix decompositions},
  SIAM Review 53~(2) (2011) 217--288.

\bibitem{rokhlin2010randomized}
V.~Rokhlin, A.~Szlam, M.~Tygert, A randomized algorithm for principal component
  analysis, SIAM Journal on Matrix Analysis and Applications 31~(3) (2010)
  1100--1124.

\bibitem{lebesgue1909integrales}
H.~Lebesgue, {Sur les int{\'e}grales singuli{\`e}res}, in: {Annales de la
  Facult{\'e} des sciences de Toulouse: Math{\'e}matiques}, Vol.~1, 1909, pp.
  25--117.

\bibitem{zakharov1968bound}
A.~Zakharov, {Bound on deviations of continuous periodic functions from their
  De La Vall{\'e}e-Poussin sums}, Mathematical Notes of the Academy of Sciences
  of the USSR 3~(1) (1968) 45--49.

\bibitem{stechkin1961approximation}
S.~B. Stechkin, {The approximation of periodic functions by Fej{\'e}r sums},
  Trudy Matematicheskogo Instituta imeni VA Steklova 62 (1961) 48--60.

\bibitem{efimov1959approximation}
A.~V. Efimov, {Approximation of periodic functions by de la Vallee-Poussin
  sums}, Izvestiya Rossiiskoi Akademii Nauk Seriya Matematicheskaya 23~(5)
  (1959) 737--770.

\bibitem{telyakovskii1958approximation}
S.~A. Telyakovskii, {Approximation of differentiable functions by de la
  Vall{\'e}e Poussin's sums}, in: Doklady Akademii Nauk, Vol. 121, Russian
  Academy of Sciences, 1958, pp. 426--429.

\bibitem{nikolski1940certaines}
S.~Nikolski, {Sur certaines m{\'e}thodes d'approximation au moyen de sommes
  trigonom{\'e}triques}, Izvestiya Rossiiskoi Akademii Nauk. Seriya
  Matematicheskaya 4~(6) (1940) 509--520.

\bibitem{boyer2011generalized}
R.~P. Boyer, W.~M.~Y. Goh, Generalized {Gibbs} phenomenon for {Fourier} partial
  sums and de la {Vall{\'e}e-Poussin} sums, Journal of Applied Mathematics and
  Computing 37~(1-2) (2011) 421--442.

\bibitem{magomed2016approximation}
M.~G. Magomed-Kasumov, {Approximation properties of de la Vall{\'e}e-Poussin
  means for piecewise smooth functions}, Mathematical Notes 100~(1) (2016)
  229--244.

\bibitem{sharapudinov2012approximation}
I.~I. Sharapudinov, {Approximation properties of de la Vall{\'e}e-Poussin means
  on classes of Sobolev type with variable exponent}, Vestn. Daghestan Res.
  Center Russian Academy of Sciences 45 (2012) 5--13.

\bibitem{huang2006chain}
H.~Huang, S.~Marcantognini, N.~Young, Chain rules for higher derivatives, The
  Mathematical Intelligencer 28~(2) (2006) 61--69.

\bibitem{SpivakThe}
M.~Spivak, S.~K. Veerapaneni, L.~Greengard, The fast generalized {Gauss}
  transform, SIAM Journal on Scientific Computing 32~(5) (2010) 3092--3107.

\bibitem{lopez2006spectral}
M.~L{\'o}pez-Fern{\'a}ndez, C.~Palencia, A.~Sch{\"a}dle, A spectral order
  method for inverting sectorial {L}aplace transforms, SIAM Journal on
  Numerical Analysis 44~(3) (2006) 1332--1350.

\bibitem{trefethen2006talbot}
L.~N. Trefethen, J.~A.~C. Weideman, T.~Schmelzer, Talbot quadratures and
  rational approximations, {BIT Numerical Mathematics} 46~(3) (2006) 653--670.

\bibitem{talbot1979accurate}
A.~Talbot, The accurate numerical inversion of {L}aplace transforms, {IMA
  Journal of Applied Mathematics} 23~(1) (1979) 97--120.

\bibitem{weideman2007parabolic}
J.~Weideman, L.~Trefethen, Parabolic and hyperbolic contours for computing the
  bromwich integral, Mathematics of Computation 76~(259) (2007) 1341--1356.

\bibitem{weideman2010improved}
J.~Weideman, Improved contour integral methods for parabolic {PDEs}, IMA
  Journal of Numerical Analysis 30~(1) (2010) 334--350.

\bibitem{PronyToolbox}
S.~Singh, {Prony Toolbox}, MATLAB Central File Exchange (Retrieved September 6,
  2021).

\bibitem{RogerInterpolation}
R.~Woodard, Interpolation of spatial data: {S}ome theory for kriging, Springer,
  1999.

\bibitem{Williams2005Gaussian}
C.~K.~I. Williams, {G}aussian {P}rocesses for {M}achine {L}earning, MIT Press,
  2005.

\bibitem{Alexander2018Gaussian}
A.~Denzel, J.~K{\"a}stner, Gaussian process regression for geometry
  optimization, The Journal of Chemical Physics 148~(9) (2018) 094114.

\bibitem{Dral2019MLatom}
P.~O. Dral, Gaussian process regression for geometry optimization, Journal on
  Computational Chemistry 40~(26) (2019) 2339--2347.

\bibitem{ewald1921berechnung}
P.~P. Ewald, Die {B}erechnung optischer und elektrostatischer
  {G}itterpotentiale, Ann. Phys. 369~(3) (1921) 253--287.

\bibitem{JinLiXuZhao2020}
S.~Jin, L.~Li, Z.~Xu, Y.~Zhao, {A random batch Ewald method for particle
  systems with Coulomb interactions}, SIAM Journal on Scientific Computing
  43~(4) (2021) B937--B960.

\bibitem{lopez1990difference}
M.~Lopez-Marcos, A difference scheme for a nonlinear partial
  integro-differential equation, SIAM Journal on Numerical Analysis 27~(1)
  (1990) 20--31.

\bibitem{sanz1988numerical}
J.~M. Sanz-Serna, A numerical method for a partial integro-differential
  equation, SIAM Journal on Numerical Analysis 25~(2) (1988) 319--327.

\bibitem{CAO2013154}
J.~Cao, C.~Xu, A high order schema for the numerical solution of the fractional
  ordinary differential equations, Journal of Computational Physics 238 (2013)
  154 -- 168.

\bibitem{cuesta2003fractional}
E.~Cuesta, C.~Palencia, {A fractional trapezoidal rule for integro-differential
  equations of fractional order in Banach spaces}, Applied Numerical
  Mathematics 45~(2-3) (2003) 139--159.

\bibitem{diethelm2002analysis}
K.~Diethelm, N.~J. Ford, Analysis of fractional differential equations, Journal
  of Mathematical Analysis and Applications 265~(2) (2002) 229--248.

\bibitem{banjai2019efficient}
L.~Banjai, M.~L{\'o}pez-Fern{\'a}ndez, Efficient high order algorithms for
  fractional integrals and fractional differential equations, Numerische
  Mathematik 141~(2) (2019) 289--317.

\bibitem{li2010fast}
J.-R. Li, A fast time stepping method for evaluating fractional integrals, SIAM
  Journal on Scientific Computing 31~(6) (2010) 4696--4714.

\bibitem{ZAKERI20106548}
G.-A. Zakeri, M.~Navab, {Sinc collocation approximation of non-smooth solution
  of a nonlinear weakly singular Volterra integral equation}, Journal of
  Computational Physics 229~(18) (2010) 6548 -- 6557.

\bibitem{lubich1985fractional}
C.~Lubich, {Fractional linear multistep methods for Abel-Volterra integral
  equations of the second kind}, Mathematics of Computation 45~(172) (1985)
  463--469.

\bibitem{miller1975volterra}
R.~K. Miller, {Volterra integral equations in a Banach space}, Funkcial. Ekvac
  18~(2) (1975) 163--193.

\bibitem{MOHAMMADI2015254}
F.~Mohammadi, A wavelet-based computational method for solving stochastic
  {It\^{o} -- Volterra} integral equations, Journal of Computational Physics
  298 (2015) 254 -- 265.

\bibitem{lubich1988convolution}
C.~Lubich, {Convolution quadrature and discretized operational calculus. I},
  Numerische Mathematik 52~(2) (1988) 129--145.

\bibitem{lubich1988convolution2}
C.~Lubich, {Convolution quadrature and discretized operational calculus. II},
  Numerische Mathematik 52~(4) (1988) 413--425.

\bibitem{lubich2004convolution}
C.~Lubich, Convolution quadrature revisited, BIT Numerical Mathematics 44~(3)
  (2004) 503--514.

\bibitem{lopez2013generalized}
M.~L{\'o}pez-Fern{\'a}ndez, S.~Sauter, Generalized convolution quadrature with
  variable time stepping, IMA Journal of Numerical Analysis 33~(4) (2013)
  1156--1175.

\bibitem{march2015kernel}
W.~B. {March}, B.~{Xiao}, S.~{Tharakan}, C.~D. {Yu}, G.~{Biros}, A
  kernel-independent {FMM} in general dimensions, in: SC '15: Proceedings of
  the International Conference for High Performance Computing, Networking,
  Storage and Analysis, 2015, pp. 1--12.

\bibitem{liao2003beyond}
S.~Liao, {Beyond Perturbation: Introduction to The Homotopy Analysis Method},
  CRC press, 2003.

\bibitem{trujillo1999riemann}
J.~Trujillo, M.~Rivero, B.~Bonilla, {On a Riemann--Liouville generalized
  Taylor's formula}, Journal of Mathematical Analysis and Applications 231~(1)
  (1999) 255--265.

\bibitem{liu2015local}
Z.~Liu, T.~Wang, G.~Gao, {A local fractional Taylor expansion and its
  computation for insufficiently smooth functions}, East Asian Journal on
  Applied Mathematics 5~(2) (2015) 176--191.

\bibitem{osler1971taylor}
T.~J. Osler, Taylor's series generalized for fractional derivatives and
  applications, SIAM Journal on Mathematical Analysis 2~(1) (1971) 37--48.

\bibitem{tongke2019fractional}
W.~Tongke, F.~Meng, {Fractional order degenerate kernel methods for Fredholm
  integral equations of the second kind with endpoint singularities},
  Mathematica Numerica Sinica 41~(1) (2019) 66.

\bibitem{guo2020fractional}
J.~Guo, T.~Wang, Fractional {Hermite} degenerate kernel method for linear
  {Fredholm} integral equations involving endpoint weak singularities, Journal
  of Applied Analysis \& Computation 10~(5) (2020) 1918--1936.

\bibitem{zarei2018solving}
E.~Zarei, S.~Noeiaghdam, {Solving generalized Abel's integral equations of the
  first and second kinds via Taylor-collocation method}, arXiv preprint
  arXiv:1804.08571 (2018).

\bibitem{toutounian2014new}
F.~Toutounian, H.~Nasabzadeh, {A new method based on generalized Taylor
  expansion for computing a series solution of the linear systems}, Applied
  Mathematics and Computation 248 (2014) 602--609.

\bibitem{Wanner1996}
G.~Wanner, E.~Hairer, {Solving Ordinary Differential Equations}, Springer
  Berlin Heidelberg, 1996.

\bibitem{Srivastava2013}
H.~M. Srivastava, R.~G. Buschman, {Theory and Applications of Convolution
  Integral equations}, Springer Science {\&} Business Media, 2013.

\bibitem{Corduneanu1973}
C.~Corduneanu, {Integral Equations and Stability of Feedback Systems}, Academic
  Press, 1973.

\bibitem{Tamarkin1930}
J.~D. Tamarkin, On integrable solutions of {A}bel's integral equation, Annals
  of Mathematics 31~(2) (1930) 219--229.

\bibitem{jaswon1977integral}
M.~A. Jaswon, Integral equation methods in potential theory and elastostatics
  (1977).

\bibitem{jiang2004second}
S.~Jiang, V.~Rokhlin, Second kind integral equations for the classical
  potential theory on open surfaces {II}, Journal of Computational Physics
  195~(1) (2004) 1--16.

\bibitem{levinson1960nonlinear}
N.~Levinson, A nonlinear {V}olterra equation arising in the theory of
  superfluidity, Journal of Mathematical Analysis and Applications 1~(1) (1960)
  1--11.

\bibitem{Heiden2013}
U.~an~der Heiden, Analysis of Neural Networks, Vol.~35, Springer Science \&
  Business Media, 2013.

\end{thebibliography}

\end{document}